\newcommand{\poly}{\mathrm{poly}}
\newcommand{\wt}[1]{\tilde{#1}}
\DeclareMathOperator{\var}{Var}
\DeclareMathOperator{\nnz}{nnz}
\newcommand{\inorm}[1]{\|{#1}\|_{\infty}}
\newcommand{\norm}[1]{\|{#1}\|}
\newcommand{\cS}{\mathcal{S}}
\newcommand{\cM}{\mathcal{M}}
\newcommand{\bP}{\boldsymbol{P}}
\newcommand{\bSigma}{\boldsymbol{\Sigma}}
\newcommand{\br}{\boldsymbol{r}}
\newcommand{\cA}{\mathcal{A}}
\newcommand{\bI}{\boldsymbol{I}}
\newcommand{\cT}{\mathcal{T}}
\newcommand{\bw}{\boldsymbol{w}}
\newcommand{\bx}{\boldsymbol{x}}
\newcommand{\bp}{\boldsymbol{p}}
\newcommand{\bsigma}{\boldsymbol{\sigma}}
\newcommand{\bxi}{\boldsymbol{\xi}}
\newcommand{\bg}{\boldsymbol{g}}
\newcommand{\bQ}{\boldsymbol{Q}}
\newcommand{\EE}{\mathbb{E}}
\newcommand{\cN}{\mathcal{N}}
\newcommand{\RR}{\mathbb{R}}
\newcommand{\bv}{\boldsymbol{v}}
\newcommand{\bu}{\boldsymbol{u}}
\newcommand{\one}{\boldsymbol{1}}
\newcommand{\wh}[1]{\widehat{#1}}
\newcommand{\tO}{\tilde{O}}
\newcommand{\cK}{\mathcal{K}}
\newtheorem{fact}{Fact}
\newtheorem{theorem}{Theorem}[section]
\newtheorem{lemma}[theorem]{Lemma}
\newtheorem{corollary}[theorem]{Corollary}
\theoremstyle{definition}
\newtheorem{definition}[theorem]{Definition}
\newtheorem{proposition}{Proposition}[theorem]
\theoremstyle{remark}
\newtheorem{remark}[theorem]{Remark}
\def\BState{\State\hskip-\ALG@thistlm}
\numberwithin{equation}{section}
\newcommand{\R}{\mathbb{R}}
\newcommand{\states}{\cS}
\newcommand{\actions}{\cA}
\newcommand{\valuespace}{\R^{\states}}
\newcommand{\policyspace}{\actions^\states}
\newcommand{\defeq}{\stackrel{\mathrm{\scriptscriptstyle def}}{=}}
\newcommand{\otilde}{\tilde{O}}
\newcommand{\vopt}{\bv^{*}}
\DeclareMathOperator*{\argmax}{argmax}
\def\cT{\mathcal{T}}
\def\bP{{\boldsymbol{P}}}
\def\br{{\boldsymbol{r}}}
\def\bu{{\boldsymbol{u}}}
\def\bv{{\boldsymbol{v}}}
\newcommand{\halfErr}{\textsc{HalfErr}}
\def\RR{\mathbb{R}}
\title{Near-Optimal Time and Sample Complexities for Solving Discounted Markov Decision Process with a Generative Model}
\author{
	Aaron Sidford \\
	Stanford University \\ 
	\texttt{sidford@stanford.edu}
	\and
	Mengdi Wang \\
	Princeton University \\
	\texttt{mengdiw@princeton.edu}
	\and
	Xian Wu \\
	Stanford University \\
	\texttt{xwu20@stanford.edu}
	\and
	Lin F. Yang \\
	Princeton University \\
	\texttt{lin.yang@princeton.edu}
	\and
	Yinyu Ye \\
	Stanford University \\
	\texttt{yyye@stanford.edu}
}
\begin{document}

\maketitle

\begin{abstract}
In this paper we consider the problem of computing an $\epsilon$-optimal policy of a discounted Markov Decision Process (DMDP) provided we can only access its transition function through a generative sampling model that given any state-action pair samples from the transition function in $O(1)$ time. Given such a DMDP with states $\states$, actions $\actions$, discount factor $\gamma\in(0,1)$, and rewards in range $[0, 1]$ we provide an algorithm which computes an $\epsilon$-optimal policy with probability $1 - \delta$ where {\it both} the time spent and number of sample taken are upper bounded by 
	\[
O\left[\frac{|\cS||\cA|}{(1-\gamma)^3 \epsilon^2} \log \left(\frac{|\cS||\cA|}{(1-\gamma)\delta \epsilon}
		\right) 
		\log\left(\frac{1}{(1-\gamma)\epsilon}\right)\right] ~.
\]
For fixed values of $\epsilon\in(0,1)$, this improves upon the previous best known bounds by a factor of $(1 - \gamma)^{-1}$ and matches the sample complexity lower bounds proved in \cite{azar2013minimax} up to logarithmic factors. 
We also extend our method to computing $\epsilon$-optimal policies for finite-horizon MDP with a generative model and provide a nearly matching sample complexity lower bound. 
\end{abstract} 



\newpage

\section{Introduction}
%
%
%
%
%


Markov decision processes (MDPs) are a fundamental mathematical abstraction used to model sequential decision making under uncertainty and are a basic model of discrete-time stochastic control and reinforcement learning (RL). 
Particularly central to RL is the case of computing or learning an approximately optimal policy when the MDP itself is not fully known beforehand. 
One of the simplest such settings is when the states, rewards, and actions are all known but the transition between states when an action is taken is probabilistic, unknown, and can only be sampled from. 

Computing an approximately optimal policy with high probability in this case is known as PAC RL with a generative model. It is a well studied problem with multiple existing results providing algorithms with improved the sample complexity (number of sample transitions taken) and running time (the total time of the algorithm) under various MDP reward structures, e.g. discounted infinite-horizon, finite-horizon, etc. (See Section~\ref{sec:prev_work} for a detailed review of the literature.)

In this work, we consider this well studied problem of computing approximately optimal policies of discounted infinite-horizon Markov Decision Processes (DMDP) under the assumption we can only access the DMDP by sampling state transitions. Formally, we suppose that we have a DMDP with a known set of states, $\states$, a known set of actions that can be taken at each states, $\actions$, a known reward $\br_{s,a} \in [0, 1]$ for taking action $a \in \actions$ at state $s \in \states$, and a discount factor $\gamma\in(0,1)$. We assume that taking action $a$ at state $s$ probabilistically transitions an agent to a new state based on a fixed, but unknown probability vector $\bP_{s, a}$. The objective is to maximize the cumulative sum of discounted  rewards in expectation.
Throughout this paper, we assume that we have a \emph{generative model}, a notion introduced by \cite{kakade2003sample}, which allows us to draw random state transitions of the DMDP. In particular, we assume that we can sample from the distribution defined by $\bP_{s, a}$ for all $(s,a) \in \cS \times \cA$ in $O(1)$ time. This is a natural assumption and can be achieved in expectation in certain computational models with linear time preprocessing of the DMDP.\footnote{If instead the oracle needed time $\tau$, every running time result in this paper should be multiplied by $\tau$.}

The main result of this paper is that we provide the first algorithm that is sample-optimal and runtime-optimal (up to polylogarithmic factors) for computing an $\epsilon$-optimal policy of a DMDP with a generative model (in the regime of $ 1/\sqrt{(1-\gamma)|\cS|}\le \epsilon\le 1$). In particular, we develop a randomized Variance-Reduced Q-Value Iteration (vQVI) based algorithm that computes an $\epsilon$-optimal policy with probability $1 - \delta$ with a number of samples, i.e. queries to the generative model, bound by
\[
		O\left[\frac{|\cS||\cA|}{(1-\gamma)^3 \epsilon^2} \log \left(\frac{|\cS||\cA|}{(1-\gamma)\delta \epsilon}
		\right) 
		\log\left(\frac{1}{(1-\gamma)\epsilon}\right)\right] ~.
\]
This result matches (up to polylogarithmic factors) the following sample complexity lower bound established in \cite{azar2013minimax} for finding $\epsilon$-optimal policies with probability $1-\delta$ (see Appendix~\ref{sec:lower bound}):
$$\Omega \left[\frac{|\cS||\cA|}{(1-\gamma)^3 \epsilon^2} \log\left(\frac{|\cS||\cA| }{\delta}\right) \right] ~.$$
Furthermore, we show that the algorithm can be implemented using sparse updates such that the overall run-time complexity is equal to its sample complexity up to constant factors, as long as each sample transition can be generated in $O(1)$ time. Consequently, up to logarithmic factors our run time complexity is optimal as well. In addition, the algorithm's 
 space complexity is $\Theta(|\cS||\cA|)$.



Our method and analysis builds upon a number of prior works. (See Section~\ref{sec:prev_work} for an in-depth comparison.)
The paper \cite{azar2013minimax} provided the first algorithm that achieves the optimal sample complexity for finding $\epsilon$-optimal value functions (rather than $\epsilon$-optimal policy), as well as the matching lower bound. Unfortunately an $\epsilon$-optimal value function does not imply an $\epsilon$-optimal policy and if we directly use the method of \cite{azar2013minimax}  to get an $\epsilon$-optimal policy for constant $\epsilon$, the best known sample complexity is $\otilde(|\cS||\cA| (1-\gamma)^{-5} \epsilon^{-2})$.
\footnote{\cite{azar2013minimax} showed that one can obtain $\epsilon$-optimal \emph{value} $v$ (instead of $\epsilon$-optimal policy) using sample size $\propto (1-\gamma)^{-3}\epsilon^{-2}$. By using this $\epsilon$-optimal value $v$, one can get a greedy policy that is $[(1-\gamma)^{-1}\epsilon]$-optimal. By setting  $\epsilon\rightarrow (1-\gamma)\epsilon$, one can obtain an $\epsilon$-optimal policy, using the number of samples $\propto (1-\gamma)^{-5}\epsilon^{-2}$.}
This bound is known to be improvable through related work of \cite{sidford2018variance} which provides a method for computing an $\epsilon$-optimal policy using $\otilde(|\cS||\cA| (1-\gamma)^{-4} \epsilon^{-2})$ samples and total runtime and the work of \cite{azar2013minimax} which in the regime of small approximation error, i.e. where $\epsilon = O( (1-\gamma)^{-1/2}|\cS|^{-1/2})$,
 already provides a method that achieves the optimal sample complexity. However, when the approximation error takes fixed values, e.g. $\epsilon\geq\Omega((1-\gamma)^{-1/2}|\cS|^{-1/2})$,
 there remains a gap between the best known runtime and sample complexity for computing an $\epsilon$-optimal policy and the theoretical lower bounds. For fixed values of $\epsilon$, which mostly occur in real applications, our algorithm improves upon the previous best sample and time complexity bounds by a factor of $(1-\gamma)^{-1}$ where $\gamma\in(0,1)$, the discount factor, is typically close to 1. 

We achieve our results by combining and strengthening techniques from both \cite{azar2013minimax} and \cite{sidford2018variance}. On the one hand, in \cite{azar2013minimax} the authors showed that simply constructing a ``sparsified" MDP model by taking samples and then solving this model to high precision yields a sample optimal algorithm in our setting for computing the approximate value of every state. On the other hand,  \cite{sidford2018variance} provided faster algorithms for solving explicit DMDPs and improved sample and time complexities given a sampling oracle. In fact, as we show in Appendix~\ref{sec:sparsify mdp}, simply combining these two results yields the first nearly optimal runtime for approximately learning the value function with a generative model. 
Unfortunately, it is known that an approximate-optimal value function does not immediately yield an approximate-optimal policy of comparable quality (see e.g. \cite{bertsekas2013abstract}) and it is was previously unclear how to combine these methods to improve upon previous known bounds for computing an approximate policy.
To achieve our policy computation algorithm we therefore open up both the algorithms and the analysis in \cite{azar2013minimax} and \cite{sidford2018variance}, combining them in nontrivial ways. Our proofs leverage techniques ranging from standard probabilistic analysis tools such as Hoeffding and Bernstein inequalities, to optimization techniques such as variance reduction, to properties specific to MDPs such as the Bellman fixed-point recursion for expectation and variance of the optimal value vector, and monotonicity of value iteration. 

Finally, we extend our method to finite-horizon MDPs, which are also occurred frequently in real applications. We show that the number of samples needed by this algorithm is 
$
\wt{O}(H^{3}|\cS||\cA| \epsilon^{-2}),
$
in order to obtain an $\epsilon$-optimal policy for $H$-horizon MDP (see Appendix~\ref{sec:finite_horizon}). We also show that the preceding sample complexity is optimal up to logarithmic factors by providing a matching lower bound. We hope this work ultimately opens the door for future practical and theoretical work on solving MDPs and efficient RL more broadly.


\def\cS{\mathcal{S}}
\def\cA{\mathcal{A}}
\def\S{|\mathcal{S}|}
\def\A{|\mathcal{A}|}
\def\cO{\mathcal{O}}
\def\tO{\tilde\cO}

\section{Comparison to Previous Work}
\label{sec:prev_work}

\begin{table*}[h]
\begin{center}\vspace{-.5em}
	{\small
   \begin{tabular}{|>{\centering\arraybackslash}m{4cm}|>{\centering\arraybackslash}m{5cm}|>{\centering\arraybackslash}m{3cm}|}
		\hline
		\textbf{Algorithm} & \textbf{Sample Complexity} & \textbf{References} \\ 
		\hline
		& &
		\\[-1em]
		Phased Q-Learning& $\wt{O} (C\frac{\S \A}{(1-\gamma)^7\epsilon^2}) $  & \cite{kearns1999finite}
		\\
		\hline
		& &
		\\[-1em]
		Empirical QVI& $\wt{O} (\frac{\S \A}{(1-\gamma)^5\epsilon^2})$ \footnotemark& \cite{azar2013minimax}
		\\
		\hline
		&&
		\\[-1em]
		Empirical QVI & $\wt{O} \big(\frac{\S \A}{(1-\gamma)^3\epsilon^2}\big)$ if $\epsilon = \wt{O}\big(\frac1{\sqrt{(1-\gamma)\S}}\big) $  & 
		{ \cite{azar2013minimax}}
		\\
		\hline
		Randomized Primal-Dual Method& $\wt{O} (C\frac{\S \A}{(1-\gamma)^4\epsilon^2}) $  & \cite{wang2017randomized}\\
		\hline
		Sublinear Randomized 
		Value Iteration & $\wt{O}  \left( \frac{|\cS| |\cA|}{(1-\gamma)^4 \epsilon^2}\right)$ & \cite{sidford2018variance}
		\\
		\hline
		& &
		\\[-1em]
		Sublinear Randomized QVI & $\wt{O}  \left( \frac{|\cS| |\cA|}{(1-\gamma)^3 \epsilon^2}\right)$ & This Paper
		\\[-1em]
		& &
		\\
		\hline
	\end{tabular}
	}\vspace{-.5em}
\end{center}
\caption{ 
	\small
	\textbf{Sample Complexity to Compute $\epsilon$-Approximate Policies Using the Generative Sampling Model}: Here $\S$ is the number of states, $\A$ is the number of actions per state, $\gamma \in(0,1)$ is the discount factor, and $C$ is an upper bound on the ergodicity. Rewards are bounded between 0 and 1. 
	\label{table-sample}}
	\label{tab:literature_runtime_apx}
	\vspace{-1em}
\end{table*}
\footnotetext{Although not explicitly stated, an immediate derivation shows that obtaining an $\epsilon$-optimal policy in \cite{azar2013minimax} requires $O(|S||A|(1-\gamma)^{-5}\epsilon^{-2})$ samples.}

There exists a large body of literature on MDPs and RL  (see e.g. \cite{kakade2003sample, strehl2009reinforcement, kalathil2014empirical, dann2015sample} and reference therein). 
The classical MDP problem is to compute an optimal policy exactly or approximately, when the full MDP model is given as input. For a survey on existing complexity results when the full MDP model is given, see Appendix~\ref{sec:full model compare}.

Despite the aforementioned results of \cite{kakade2003sample, azar2013minimax, sidford2018variance},
there exists only a handful of additional RL methods that achieve a small sample complexity and a small run-time complexity at the same time for computing an $\epsilon$-optimal policy.
A classical result is the phased Q-learning method by \cite{kearns1999finite}, which takes samples from the generative model and runs a randomized value iteration. The phased Q-learning method finds an $\epsilon$-optimal policy using $\cO(|\cS| |\cA| \epsilon^{-2} / \text{poly}(1-\gamma))$ samples/updates, where each update uses  $\wt{O}(1)$ run time.\footnote{The dependence on $(1-\gamma)$ in \cite{kearns1999finite} is not stated explicitly but we believe basic calculations yield  $O(1/(1-\gamma)^7)$.}
Another work \cite{wang2017randomized} gave a randomized mirror-prox method that applies to a special Bellman saddle point formulation of the DMDP. They achieve a total runtime of $\wt{O} (|\cS|^3 |\cA| \epsilon^{-2} (1-\gamma)^{-6} )$ for the general DMDP and $\wt{O} (C |\cS| |\cA| \epsilon^{-2} (1-\gamma)^{-4}) $ for DMDPs that are ergodic under all possible policies, where $C$ is a problem-specific ergodicity measure.  
A recent closely related work is \cite{sidford2018variance} which gave a variance-reduced randomized value iteration that works with the generative model and finds an $\epsilon$-approximate policy in sample size/run time $\wt{O} (|\cS| |\cA| \epsilon^{-2} (1-\gamma)^{-4}) $, without requiring any ergodicity assumption. 

Finally, in the case where $\epsilon = {O}\Big(1/{\sqrt{(1-\gamma)^{-1}\S}}\Big)$,  \cite{azar2013minimax} showed that the solution obtained by performing exact PI on the empirical MDP model provides not only an $\epsilon$-optimal value but also an $\epsilon$-optimal policy. In this case, 
the number of samples is $\wt{O} ( \S \A  (1-\gamma)^{-3} \epsilon^{-2})$ and matches the sample complexity lower bound.
Although this sample complexity is optimal, it requires solving the empirical MDP exactly (see Appendix~\ref{sec:alg_value}), and is no longer sublinear in the size of the MDP model because of the very small approximation error $\epsilon = {O}(1/{\sqrt{(1-\gamma)\S}})$.
See Table~\ref{table-sample} for a list of comparable sample complexity results for solving MDP based on the generative model.

\section{Preliminaries}
\label{sec:prelim}

We use calligraphy upper case letters for sets or operators, e.g., $\cS$, $\cA$ and $\cT$.
We use bold small case letters for vectors, e.g., $\bv, \br$.
We denote $\bv_{s}$ or $\bv(s)$ as the $s$-th entry of vector $\bv$.
We denote matrix as bold upper case letters, e.g., $\bP$.
We denote constants as normal upper case letters, e.g., $M$.
For a vector $\bv\in \RR^{\cN}$ for index set $\cN$, we denote $\sqrt{\bv}$, $|\bv|$, and $\bv^2$ vectors in $\RR^{\cN}$ with $\sqrt{\cdot}$, $|\cdot|$, and $(\cdot)^2$ acting coordinate-wise.
For two vectors $\bv, \bu\in \RR^{\cN}$, we denote by $\bv\le \bu$ as coordinate-wise comparison, i.e., $\forall i\in \cN: \bv(i)\le \bu(i)$. The same definition are defined to relations $\le$, $<$ and $>$.

We describe a DMDP by the tuple $(\cS, \cA, \bP, \br, \gamma)$, where $\cS$ is a finite state space, $\cA$ is a finite action space, $\bP\in\RR^{\cS\times \cA\times \cS}$ 
 is the state-action-state transition matrix, $\br\in\RR^{\cS\times \cA}$ is the state-action reward vector, and $\gamma \in (0, 1)$ is a discount factor. We use $\bP_{s, a}(s')$ to denote the probability of going to state $s'$ from state $s$ when taking action $a$. We also identify each $\bP_{s,a}$ as a vector in $\RR^{S}$. We use $\br_{s,a}$ to denote the reward obtained from taking action $a \in \actions$ at state $s \in \states$ and assume  $\br \in [0, 1]^{\cS\times \cA}$.\footnote{A general $\br\in\RR^{\cS\times \cA}$ can always be reduced to this case by shifting and scaling.}
For a vector $\bv\in \RR^{\cS}$, we denote $\bP\bv\in \RR^{\cS\times \cA}$ as $(\bP\bv)_{s,a} = \bP_{s,a}^\top \bv$.
A policy $\pi:\cS\to\cA$ maps each state to an action. The objective of MDP is to find the optimal policy $\pi^*$ that maximizes the expectation of the cumulative sum of discounted rewards.




In the remainder of this section we give definitions for several prominent concepts in MDP analysis that we use throughout the paper. 


\begin{definition}[Bellman Value Operator]
For a given DMDP the \emph{value operator} $\cT : \valuespace \mapsto \valuespace$ is defined for all $u \in \valuespace$ and $s \in \states$ by
$\cT(\bu)_{s} = \max_{a \in \actions} [ \br_{a}(s) + \gamma \cdot \bP_{s,a}^\top \bv ], \label{eq:value_operator}
$
and we let $\vopt$ denote the {\emph{value of the optimal policy $\pi^*$}}, which is the unique vector such that $\cT(\vopt) = \vopt$. 
\end{definition}

\begin{definition}[Policy] We call any vector $\pi \in \policyspace$ a \emph{policy} and say that the action prescribed by policy $\pi$ 
to be taken at state $s \in \states$ is $\pi_s$. We let $\cT_\pi : \valuespace \mapsto \valuespace$ denote the \emph{value operator associated with $\pi$} defined for all $u \in \valuespace$ and $s \in \states$ by
$
\cT_\pi(\bu)_s = \br_{s, \pi(s)} + \gamma \cdot \bP_{s, \pi(s)}^\top \bu ~,  
$ 
and we let $\bv^\pi$ denote the \emph{values of policy $\pi$}, which is the unique vector such that $\cT_\pi(\bv^\pi) = \bv^\pi$. 
\end{definition}
\vspace{-2mm}
Note that $\cT_\pi$ can be viewed as the value operator for the modified MDP where the only available action from each state is given by the policy $\pi$. Note that this modified MDP is essentially just an uncontrolled Markov Chain, i.e. there are no action choices that can be made. 

\begin{definition}[$\epsilon$-optimal value and policy] We say values $\bu \in \valuespace$ are \emph{$\epsilon$-optimal} if $\| \vopt - \bu\|_{\infty} \leq \epsilon$ and policy $\pi \in \policyspace$ is \emph{$\epsilon$-optimal} if $\| \vopt - \bv^{\pi}\|_{\infty} \leq \epsilon$, i.e. the values of $\pi$ are $\epsilon$-optimal.
\end{definition}

\begin{definition}[Q-function] 
	For any policy $\pi$, we define the Q-function of a MDP with respect to $\pi$ as a vector $\bQ\in \RR^{\cS\times\cA}$ such that
	$
		\bQ^{\pi}(s,a) = \br(s, a) + \gamma\bP_{s, a}^\top \bv^{\pi}.
	$
	The optimal $Q$-function is defined as $\bQ^{*} = \bQ^{\pi^*}$.
	We call any vector $\bQ\in \RR^{\cS\times\cA}$ a Q-function even though it may not relate to a policy or a value vector and define 
$\bv(\bQ)\in \RR^{\cS}$ and $\pi(\bQ)\in \cA^{\cS}$ as the value and policy implied by $\bQ$, by
\[
\forall s\in \cS: \bv(\bQ)(s) = \max_{a\in \cA} \bQ(s,a) \quad\text{and}\quad \pi(\bQ)(s) = \arg\max_{a\in \cA} \bQ(s,a).
\]
For a policy $\pi$, let $\bP^{\pi}\bQ\in\RR^{\cS\times\cA}$ be defined as $(\bP^{\pi}\bQ)(s,a) = \sum_{s'\in\cS}\bP_{s,a}(s')\bQ(s',\pi(s'))$.
\end{definition}

\section{Technique Overview}
\label{sec:overview}

In this section we provide a more detailed and technical overview of our approach. 
At a high level, our algorithm shares a similar framework as the variance reduction algorithm presented in \cite{sidford2018variance}.
This algorithm used two crucial algorithmic techniques, which are also critical in this paper.
We call these techniques as the \emph{monotonicity} technique and the \emph{variance reduction} technique.
Our algorithm and the results of this paper can be viewed as an advanced, non-trivial integration of these two methods, augmented with a third technique which we refer to as a \emph{total-variation} technique which was discovered in several papers \cite{munos1999variable, lattimore2012pac, azar2013minimax}.
In the remainder of this section we give an overview of these techniques and through this, explain our algorithm.

\paragraph{The Monotonicity Technique}
Recall that the classic value iteration algorithm for solving a MDP repeatedly applies the following rule 
\begin{align}
\label{eqn:value iteration}
\bv^{(i)}(s)\gets \max_{a} (r(s,a) + \gamma\bP_{s,a}^\top \bv^{(i-1)}).
\end{align}
A greedy policy $\pi^{(i)}$ can be obtained at each iteration $i$ by 
\begin{align}
\label{eqn:greedy policy}
\forall s: \pi^{(i)}(s)\gets \argmax_{a} (r(s,a) + \gamma\bP_{s,a}^\top \bv^{(i)}).
\end{align}
For any $u>0$, it can be shown that if one can approximate $\bv^{(i)}(s)$ with $\wh{\bv}^{(i)}(s)$ such that $\|\wh{\bv}^{(i)}-\bv^{(i)}\|_{\infty}\le (1-\gamma)u$ 
and run the above value iteration algorithm using these  approximated values, then after $\Theta((1-\gamma)^{-1}\log[u^{-1}(1-\gamma)^{-1}])$ iterations, the final iteration gives an value function that is $u$-optimal (\cite{bertsekas2013abstract}). 
However, 
a $u$-optimal value function only yields a $u/(1-\gamma)$-optimal greedy policy (in the worst case), even if \eqref{eqn:greedy policy} is precisely computed.
To get around this additional loss, a monotone-VI algorithm was proposed in \cite{sidford2018variance} as follows. 
At each iteration, this algorithm maintains not only an approximated value $\bv^{(i)}$ but also a policy $\pi^{(i)}$. The key for improvement is to keep values as a lower bound of the value of the policy on a set of sample paths with high probability. In particular, the following {\it monotonicity condition} was maintained with high probability
\[
\bv^{(i)}\le \cT_{\pi^{(i)}} (\bv^{(i)}) ~.
\] 
By the monotonicity of the Bellman's operator, the above equation guarantees that $\bv^{(i)}\le \bv^{\pi^{(i)}}$.
If this condition is satisfied, then, if after $R$ iterations of approximate value iteration 
we obtain an value $\wh{\bv}^{(R)}$ that is $u$-optimal then 
we also obtain a policy $\pi^{(R)}$ which by the monotonicity condition and the monotonicity of the Bellman operator $\cT_{\pi^{(R)}}$ yields
\[
\bv^{(R)}
\le \cT_{\pi^{(R)}}(\bv^{(R)})
\le \cT_{\pi^{(R)}}^2 (\bv^{(R)})  
\le \ldots 
\le \cT_{\pi^{(R)}}^{\infty} (\bv^{(R)})
=\bv^{\pi^{(R)}} \le \bv^{*}.
\]
and therefore this $\pi^{(R)}$ is an $u$-optimal policy. Ultimately, this technique avoids the standard loss of a $(1-\gamma)^{-1}$ factor when converting values to policies.

\paragraph{The Variance Reduction Technique}
Suppose now that we provide an algorithm that maintains the monotonicity condition using random samples from $\bP_{s,a}$ to approximately compute \eqref{eqn:value iteration}. 
Further, suppose we want to obtain a new value function and policy that is at least $(u/2)$-optimal.
In order to obtain the desired accuracy,
we need to approximate $\bP_{s,a}^{\top}\bv^{(i)}$ up to error at most $(1-\gamma)u/2$.
Since $\|\bv^{(i)}\|_{\infty}\le (1-\gamma)^{-1}$, by Hoeffding bound, $\wt{O}((1-\gamma)^{-4}u^{-2})$ samples suffices. Note 
that the number of samples also determines the computation time and therefore each iteration takes $\wt{O}((1-\gamma)^{-4}u^{-2} |\cS||\cA|)$ samples/computation time and $\wt{O}((1-\gamma)^{-1})$ iterations for the value iteration to converge. Overall, this yields a sample/computation complexity of $\wt{O}((1-\gamma)^{-5}u^{-2} |\cS||\cA|)$.
To reduce the $(1-\gamma)^{-5}$ dependence, \cite{sidford2018variance} uses properties of the input  (and the initialization) vectors: $\|\bv^{(0)} - \bv^{*}\|_{\infty}\le u$ and rewrites value iteration \eqref{eqn:value iteration} as follows
\begin{align}
\label{eqn:value iteration variance reduction}
\bv^{(i)}(s)\gets \max_{a} \bigr[r(s,a) + \bP_{s,a}^\top (\bv^{(i-1)} - \bv^{(0)}) + \bP_{s,a}^\top\bv^{(0)}\bigr],
\end{align}
Notice that $\bP_{s,a}^\top\bv^{(0)}$ is shared over all iterations and we can approximate it up to error $(1-\gamma)u/4$ using only $\wt{O}((1-\gamma)^{-4}u^{-2})$ samples.
For every iteration, we have $\|\bv^{(i-1)} - \bv^{(0)}\|_{\infty}\le u$ (recall that we demand the monotonicity is satisfied at each iteration).
Hence $\bP_{s,a}^\top (\bv^{(i-1)} - \bv^{(0)})$ can be approximated up to error $(1-\gamma)u/4$ using only $\wt{O}((1-\gamma)^{-2})$ samples (note that there is no $u$-dependence here).
By this technique, over $\wt{O}((1-\gamma)^{-1})$ iterations only
$
	\wt{O}((1-\gamma)^{-4}u^{-2} + (1-\gamma)^{-3})
$
samples/computation per state action pair are needed, i.e. there is a $(1-\gamma)$ improvement.

\paragraph{The Total-Variance Technique}
By combining the monotonicity technique and variance reduction technique, one can obtain a $\wt{O}((1-\gamma)^{-4})$ sample/running time complexity (per state-action pair) on computing a policy; this was one of the results \cite{sidford2018variance}. 
However, there is a gap between this bound and the best known lower bound of $\wt{\Omega}[|S| |A| \epsilon^{-2} (1-\gamma)^{-3}]$ \cite{azar2013minimax}.
Here we show how to remove the last $(1-\gamma)$ factor by better exploiting the structure of the MDP. In \cite{sidford2018variance} the update error in each iteration was set to be at most $(1-\gamma) u/2$ to compensate for error accumulation through a horizon of length $(1-\gamma)^{-1}$ (i.e., the accumulated error is sum of the estimation error at each iteration). To improve we show how to leverage previous work to show that the true error accumulation is much less. To see this, let us now switch to Bernstein inequality.
Suppose we would like to estimate the value function of some policy $\pi$.  The estimation error vector of the value function is upper bounded by $\wt{O}(\sqrt{\bsigma_{\pi}/m})$, where $\bsigma_{\pi}(s) = \var_{s'\sim \bP_{s, \pi(s)}}(\bv^{\pi}(s'))$ 
denotes the variance 
of the value of the next state if starting from state $s$ by playing policy $\pi$, and $m$ is the number of samples collected per state-action pair.
The accumulated error due to estimating value functions can be shown to obey the following inequality (upper to logarithmic factors) 
\[
\text{accumated error}\propto\sum_{i=0}^{\infty} \gamma^{i}\bP_{\pi}^i\sqrt{\bsigma_{\pi}/m} \le c_1 
\left(\frac{1}{1-\gamma}\sum_{i=0}^{\infty}\gamma^{2i}\bP_{\pi}^i\bsigma_{\pi}/m\right)^{1/2},
\]
where $c_1$ is a constant and the inequality follows from a Cauchy-Swartz-like inequality.
According to the \emph{law of total variance}, for any given policy $\pi$ (in particular, the optimal policy $\pi^*$) and initial state $s$, the expected sum of variance of the tail sums of rewards, $\sum\gamma^{2i}\bP_{\pi}^i\bsigma_{\pi}$, is exactly the variance of the total return by playing the policy $\pi$. This observation was previously used in the analysis of \cite{munos1999variable, lattimore2012pac,azar2013minimax}.
Since the upper bound on the total return is $(1-\gamma)^{-1}$, it can be shown that $\sum_i\gamma^{2i}\bP_{\pi}^i\bsigma_{\pi}\le (1-\gamma)^{-2}\cdot \one$ 
and therefore the total error accumulation is 
$\sqrt{(1-\gamma)^{-3}/m}$. Thus picking $m\approx(1-\gamma)^{-3}\epsilon^{-2}$ is sufficient to control the accumulated error (instead of $(1-\gamma)^{-4}$). To analyze our algorithm, we will apply the above inequality to the optimal policy $\pi^*$ to obtain our final error bound.

\paragraph{Putting it All Together} In the next section we show how to combine these three techniques into one algorithm and make them work seamlessly. 
In particular, we provide and analyze how to combine these techniques into an
 Algorithm~\ref{alg-halfErr} which can be used to at least halve the error of a current policy. 
Applying this routine a logarithmic number of time then yields our desired bounds.
In the input of the algorithm, we demand the input value  $\bv^{(0)}$ and $\pi^{(0)}$ satisfies the required monotonicity requirement, i.e., $\bv^{(0)}\le \cT_{\pi^{(0)}} (\bv^{(0)})$ (in the first iteration, the zero vector $\bf{0}$ and an arbitrary policy $\pi$ satisfies the requirement). 
We then pick a set of samples to estimate $\bP \bv^{(0)}$ accurately with $\wt{O}((1-\gamma)^{-3}\epsilon^{-2})$ samples per state-action pair. 
The same set of samples is used to estimate the variance vector $\bsigma_{\bv^*}$.
These estimates serve as the initialization of the algorithm.
In each iteration $i$, we draw fresh new samples to compute estimate of $\bP (\bv^{(i)} - \bv^{(0)})$. 
The sum of the estimate of $\bP \bv^{(0)}$ and $\bP (\bv^{(i)} - \bv^{(0)})$ gives an estimate of $\bP \bv^{(i)}$. 
We then make the above estimates have one-sided error by shifting them according to their estimation errors (which is estimated from the Bernstein inequality).
These one-side error estimates allow us to preserve monotonicity, i.e., guarantees the new value is always improving on the entire sample path with high probability.
The estimate of $\bP \bv^{(i)}$ is plugged in to the Bellman's operator and gives us new value function, $\bv^{(i+1)}$ and policy $\pi^{(i+1)}$, satisfying the monotonicity and advancing accuracy.
Repeating the above procedure for the desired number of iterations completes the algorithm.


	\begin{algorithm}[htb!]
		\caption{Variance-Reduced QVI\label{alg-halfErr}}
		\begin{algorithmic}[1]
			\State 
			\textbf{Input:} A sampling oracle for DMDP $\cM=(\cS, \cA, \br, \bP, \gamma)$
			\State
			\textbf{Input:} Upper bound on error $u\in[0,(1-\gamma)^{-1}]$ and error probability $\delta \in (0, 1)$
			\State 
			\textbf{Input:}
			Initial values $\bv^{(0)}$ and policy $\pi^{(0)}$ such that $\bv^{(0)}\le \cT_{\pi^{(0)}}\bv^{(0)}$, and $\bv^*-\bv^{(0)} \le u \one$;
			\State\textbf{Output:} $\bv, \pi$ such that $\bv \le \cT_{\pi}(\bv)$ and $\bv^*-\bv \le (u/2)\cdot\one$.
			\State
			\State\textbf{INITIALIZATION:} 
			\State Let $\beta\gets (1-\gamma)^{-1}$, and $R\gets\lceil c_1\beta\ln[\beta u^{-1}]\rceil$ for constant $c_1$; 
			\State Let $m_1 \gets{c_2\beta^3u^{-2}{\log(8|\cS||\cA|\delta^{-1})} }{}$ for constant $c_2$; 
			\State Let $m_2\gets {c_3\beta^{2}\log[2R|\cS||\cA|\delta^{-1}]}$ for constant $c_3$;
			\State Let $\alpha_1\gets{m_1}^{-1}{\log(8|\cS||\cA|\delta^{-1})}$;
			\State
			For each  $(s, a)\in \cS\times\cA$,
			sample independent samples $s_{s,a}^{(1)}, s_{s,a}^{(2)}, \ldots, s_{s,a}^{(m_1)}$ from $\bP_{s,a}$;
			\State
			Initialize $\bw=\wt{\bw} = \wh{\bsigma}=\bQ^{(0)} \gets {\bf0}_{\cS \times \cA}$, and $i\gets 0$;	
			\For{each $(s, a)\in \cS\times\cA$} 
			\State \emph{\textbackslash \textbackslash Compute empirical estimates of $\bP_{s,a}^{\top}\bv^{(0)}$ and $\bsigma_{\bv^{(0)}}(s,a)$}
			\State 
			Let $\wt{\bw}(s,a) \gets \frac{1}{m_1} 
			\sum_{j=1}^{m_1} \bv^{(0)}(s_{s,a}^{(j)})$ 
			\label{alg1: compute w1} 
			\State 
			Let $\wh{\bsigma}(s,a)\gets \frac{1}{m_1} \sum_{j=1}^{m_1}(\bv^{(0)})^2(s_{s,a}^{(j)}) - \wt{\bw}^2(s,a)$  \label{alg1: compute w} 
			\State
			
			\State \emph{\textbackslash \textbackslash Shift the empirical estimate to have one-sided error and guarantee monotonicity } 
			\State 	
			$\bw(s, a) \gets \wt{\bw}(s,a) - \sqrt{2\alpha_1\wh{\bsigma}(s,a)} - 4\alpha_1^{3/4}\norm{\bv^{(0)}}_\infty - (2/3)\alpha_1\norm{\bv^{(0)}}_{\infty}$
			
			\State
			
			\State \emph{\textbackslash \textbackslash Compute coarse estimate of the  $Q$-function}
			\State
			$\bQ^{(0)}(s,a) \gets \br(s,a) + \gamma \bw(s,a)$

			\EndFor
			\State
			\State\textbf{REPEAT:} \emph{\qquad\qquad\textbackslash \textbackslash successively improve}
			\For{$i=1$ to $R$}
			\State  \emph{\textbackslash \textbackslash Compute $\bg^{(i)}$ the estimate of $\bP \big[\bv^{(i)}  - \bv^{(0)}\big]$ with one-sided error}
			\State\label{alg: v1} Let ${\bv}^{(i)} \gets \bv( \bQ^{(i-1)})$, ${\pi}^{(i)}\gets \pi(\bQ^{(i-1)})$; \textbackslash \textbackslash \emph{let $\wt{\bv}^{(i)}\gets \bv^{(i)}, \wt{\pi}^{(i)}\gets {\pi}^{(i)}$ (for analysis)}; 
			\State\label{alg: v2} For each $s\in \cS$, if ${\bv}^{(i)}(s)\le \bv^{(i-1)}(s)$, then 
			$\bv^{(i)}(s)\gets \bv^{(i-1)}(s)$ and $\pi^{(i)}(s)\gets \pi^{(i-1)}(s)$;
			\State For each $(s, a)\in \cS\times\cA$,
			draw independent samples $\wt{s}_{s,a}^{(1)}, \wt{s}_{s,a}^{(2)}, \ldots, \wt{s}_{s,a}^{(m_2)}$ from $\bP_{s,a}$;
			\State  \label{alg1: compute g} Let $\bg^{(i)}(s,a)\gets {\frac{1}{m_2}} \sum_{j=1}^{m_2} \big[\bv^{(i)}(\wt{s}_{s,a}^{(j)}) - \bv^{(0)}(\wt{s}_{s,a}^{(j)}) \big]- (1-\gamma)u/8$; \\
\State  \emph{\textbackslash \textbackslash Improve $\bQ^{(i)}$}
			\State \label{alg: q-func} $\bQ^{(i)}\gets \br + \gamma\cdot[\bw+\bg^{(i)}]$;
			\EndFor
			\State \textbf{return} $\bv^{(R)}, \pi^{(R)}$.
		\end{algorithmic}
	\end{algorithm}
	\vspace{-2mm}

\section{Algorithm and Analysis}
\label{sec:alg_policy}

In this section we provide and analyze our near sample/time optimal
 $\epsilon$-policy computation algorithm. As discussed in Section~\ref{sec:overview} our algorithm combines three main ideas: variance reduction, the monotone value/policy iteration, and the reduction of accumulated error via Bernstein inequality. These ingredients are used in the Algorithm~\ref{alg-halfErr} to provide a routine which halves the error of a given policy. We analyze this procedure in Section~\ref{sub:var_reduce} and use it to obtain our main result in Section~\ref{sub:halving}.



\subsection{The Analysis of the Variance Reduced Algorithm}
\label{sub:var_reduce}

In this section we analyze Algorithm~\ref{alg-halfErr}, showing that each iteration of the algorithm approximately contracts towards the optimal value and policy and that ultimately the algorithm halves the error of the input value and policy with high probability. All proofs in this section are deferred to Appendix~\ref{sec:proof of main alg}.

We start with bounding the error of $\wt{\bw}$ and $\wh{\bsigma}$ defined in Line \ref{alg1: compute w1} and \ref{alg1: compute w} of Algorithm 1.
Notice that these are the empirical estimations of $\bP_{s,a}^{\top}\bv^{(0)}$ and $\bsigma_{\bv^{(0)}}(s,a)$.
\begin{lemma}[Empirical Estimation Error]
    \label{lemma: emprical mean}
	Let $\wt{\bw}$ and $\wh{\bsigma}$ be computed in Line \ref{alg1: compute w1} and \ref{alg1: compute w} of Algorithm \ref{alg-halfErr}.
	Recall that $\wt{\bw}$ and $\wh{\bsigma}$ are empirical estimates of $\bP\bv$ and $\bsigma_{\bv}=\bP\bv^2 - (\bP\bv)^2$ using $m_1$ samples per $(s,a)$ pair.
	With probability at least $1-\delta$, for $L \defeq \log(8|\cS||\cA|\delta^{-1})$, we have 
	\begin{align}
	\label{eqn:estimate pv}
    \big|{\wt{\bw} - \bP^\top\bv^{(0)}}\big|\le \sqrt{{2m_1^{-1}\bsigma_{\bv^{(0)}}\cdot{L}}} + {2(3m_1)^{-1}\norm{\bv^{(0)}}_{\infty} {L}} 
	\end{align}
and
	\begin{align}
		\label{eqn:estimate sigma}
		\forall (s,a)\in \cS\times \cA:\quad
		\big|\wh{\bsigma}(s,a) - \bsigma_{\bv^{(0)}}(s, a)\big| \le 4\norm{\bv^{(0)}}_{\infty}^2 \cdot \sqrt{{2m_1^{-1}{L}}}.
	\end{align}
\end{lemma}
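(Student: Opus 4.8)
The plan is to prove the two coordinate-wise bounds by applying a concentration inequality to the i.i.d.\ samples $s_{s,a}^{(1)},\dots,s_{s,a}^{(m_1)}$ for each fixed pair $(s,a)$, and then union bounding over all $|\cS||\cA|$ pairs with the per-pair failure probability chosen so that the resulting logarithmic term is exactly $L = \log(8|\cS||\cA|\delta^{-1})$. For \eqref{eqn:estimate pv}, fix $(s,a)$ and set $X_j \defeq \bv^{(0)}(s_{s,a}^{(j)})$; these are i.i.d.\ with $\E X_j = \bP_{s,a}^\top\bv^{(0)}$, $\var(X_j) = \bsigma_{\bv^{(0)}}(s,a)$, $|X_j| \le \norm{\bv^{(0)}}_\infty$, and $\wt{\bw}(s,a) = m_1^{-1}\sum_j X_j$. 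Bernstein's inequality applied to $\{X_j\}$ gives, with probability at least $1-\delta/(4|\cS||\cA|)$, exactly
\[
\big|\wt{\bw}(s,a) - \bP_{s,a}^\top\bv^{(0)}\big| \le \sqrt{2m_1^{-1}\bsigma_{\bv^{(0)}}(s,a)\,L} + \tfrac{2}{3}m_1^{-1}\norm{\bv^{(0)}}_\infty L,
\]
and a union bound over the $|\cS||\cA|$ pairs yields \eqref{eqn:estimate pv} with probability at least $1-\delta/4$.

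For \eqref{eqn:estimate sigma} I would start from the identity $\bsigma_{\bv^{(0)}}(s,a) = \bP_{s,a}^\top(\bv^{(0)})^2 - (\bP_{s,a}^\top\bv^{(0)})^2$ and decompose the error as
\[
\wh{\bsigma}(s,a) - \bsigma_{\bv^{(0)}}(s,a) = \Big(\tfrac{1}{m_1}\sum_{j=1}^{m_1} X_j^2 - \bP_{s,a}^\top(\bv^{(0)})^2\Big) - \Big(\wt{\bw}(s,a)^2 - (\bP_{s,a}^\top\bv^{(0)})^2\Big).
\]
The first bracket is an empirical mean of the i.i.d.\ variables $X_j^2 \in [0,\norm{\bv^{(0)}}_\infty^2]$, so Hoeffding's inequality bounds its absolute value by $\norm{\bv^{(0)}}_\infty^2\sqrt{L/(2m_1)}$ with probability at least $1-\delta/(4|\cS||\cA|)$. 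For the second bracket I would use the difference-of-squares factorization $\wt{\bw}^2 - (\bP_{s,a}^\top\bv^{(0)})^2 = (\wt{\bw} - \bP_{s,a}^\top\bv^{(0)})(\wt{\bw} + \bP_{s,a}^\top\bv^{(0)})$: the second factor is at most $2\norm{\bv^{(0)}}_\infty$ in absolute value since both summands lie in $[-\norm{\bv^{(0)}}_\infty,\norm{\bv^{(0)}}_\infty]$, while the first factor is controlled by the Bernstein bound from the previous step together with the crude estimates $\bsigma_{\bv^{(0)}}(s,a)\le\norm{\bv^{(0)}}_\infty^2$ and $m_1 \ge L$ (which holds by the definition $m_1 = c_2\beta^3 u^{-2}L$, since $\beta\ge 1$ and $u\le\beta$, for a suitable constant $c_2$). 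Adding the two bracket bounds and collecting constants produces a bound of the form $C\norm{\bv^{(0)}}_\infty^2\sqrt{2m_1^{-1}L}$ with $C<4$, i.e.\ \eqref{eqn:estimate sigma}; a final union bound over all pairs and over the two relevant events per pair costs probability at most $\delta$, with every logarithmic term equal to $L$ by the choice of per-pair failure probability.

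The routine content here is just the two concentration inequalities and the bookkeeping of absolute constants. The only mildly delicate point is the quadratic term $\wt{\bw}^2 - (\bP_{s,a}^\top\bv^{(0)})^2$ appearing in the variance estimate, which is why I would pass through the difference-of-squares factorization and the boundedness of $\wt{\bw}$ and $\bP_{s,a}^\top\bv^{(0)}$ by $\norm{\bv^{(0)}}_\infty$ rather than a naive triangle inequality on squares. I do not anticipate a genuine obstacle: this is a standard empirical-Bernstein-type statement, and the care needed is entirely in (i) choosing the per-pair failure probability so the logarithm collapses to $L$, and (ii) checking that the absolute constants are absorbed by the stated $4$, using that $m_1$ is at least a fixed constant multiple of $L$.
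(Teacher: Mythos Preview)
Your proposal is correct and follows essentially the same route as the paper: Bernstein plus a union bound for \eqref{eqn:estimate pv}, then the decomposition of $\wh{\bsigma}-\bsigma_{\bv^{(0)}}$ into the second-moment error (Hoeffding) and the squared-mean error (difference of squares) for \eqref{eqn:estimate sigma}. The only cosmetic difference is that the paper applies a \emph{fresh} Hoeffding bound to $|\wt{\bw}(s,a)-\bP_{s,a}^\top\bv^{(0)}|$ before the difference-of-squares step, whereas you reuse the Bernstein bound together with $\bsigma_{\bv^{(0)}}\le\|\bv^{(0)}\|_\infty^2$ and $m_1^{-1}L\le 1$; either choice yields the constant $4$.
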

The proof is a straightforward application of Bernstein's inequality and Hoeffding's inequality.

Next we show that the difference between $\sigma_{\bv^{(0)}}$ and $\sigma_{\bv^{*}}$ is also bounded. 

\begin{lemma}
\label{lemma: variance triangle}
Suppose $\norm{\bv-\bv^*}_{\infty}\le \epsilon$ for some $\epsilon > 0$, then 
$
\sqrt{\bsigma_{\bv}} \le \sqrt{\bsigma_{\bv^*}} + \epsilon\cdot\one.
$
\end{lemma}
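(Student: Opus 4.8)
The plan is to prove the stated vector inequality coordinate-wise: fix a state-action pair $(s,a)$ and show $\sqrt{\bsigma_{\bv}(s,a)}\le\sqrt{\bsigma_{\bv^*}(s,a)}+\epsilon$. The key observation is that, for any $\bu\in\RR^{\cS}$, the quantity $\sqrt{\bsigma_{\bu}(s,a)}=\bigl(\bP_{s,a}^{\top}\bu^2-(\bP_{s,a}^{\top}\bu)^2\bigr)^{1/2}$ is exactly the standard deviation of the random variable $\bu(s')$ with $s'\sim\bP_{s,a}$; equivalently, writing $\norm{\bx}_{\bP_{s,a}}\defeq\bigl(\sum_{s'}\bP_{s,a}(s')\,\bx(s')^2\bigr)^{1/2}$ for the weighted $\ell_2$ seminorm induced by the probability vector $\bP_{s,a}$, we have $\sqrt{\bsigma_{\bu}(s,a)}=\norm{\bu-(\bP_{s,a}^{\top}\bu)\one}_{\bP_{s,a}}$, and moreover the mean $\bP_{s,a}^{\top}\bu$ minimizes $c\mapsto\norm{\bu-c\one}_{\bP_{s,a}}$ over $c\in\RR$, so that $\sqrt{\bsigma_{\bu}(s,a)}=\min_{c\in\RR}\norm{\bu-c\one}_{\bP_{s,a}}$.

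Given this, I would simply apply Minkowski's inequality in the norm $\norm{\cdot}_{\bP_{s,a}}$. Setting $\mu^*\defeq\bP_{s,a}^{\top}\bv^*$,
\[
\sqrt{\bsigma_{\bv}(s,a)}=\min_{c}\norm{\bv-c\one}_{\bP_{s,a}}\le\norm{\bv-\mu^*\one}_{\bP_{s,a}}\le\norm{\bv-\bv^*}_{\bP_{s,a}}+\norm{\bv^*-\mu^*\one}_{\bP_{s,a}}.
\]
The second term equals $\sqrt{\bsigma_{\bv^*}(s,a)}$ by the characterization above. The first term is at most $\norm{\bv-\bv^*}_\infty\le\epsilon$, since $\bP_{s,a}$ is a probability distribution and hence $\norm{\bv-\bv^*}_{\bP_{s,a}}^2=\sum_{s'}\bP_{s,a}(s')(\bv(s')-\bv^*(s'))^2\le\norm{\bv-\bv^*}_\infty^2$. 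Combining the two bounds gives $\sqrt{\bsigma_{\bv}(s,a)}\le\sqrt{\bsigma_{\bv^*}(s,a)}+\epsilon$; taking this over all $(s,a)$ yields $\sqrt{\bsigma_{\bv}}\le\sqrt{\bsigma_{\bv^*}}+\epsilon\cdot\one$. (An essentially equivalent route: write $\bv=\bv^*+(\bv-\bv^*)$ and use subadditivity of standard deviation, $\sqrt{\var(X+Y)}\le\sqrt{\var(X)}+\sqrt{\var(Y)}$, together with $\sqrt{\var(\bv(s')-\bv^*(s'))}\le\norm{\bv-\bv^*}_{\bP_{s,a}}\le\norm{\bv-\bv^*}_\infty$.)

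I do not expect a genuine obstacle here; the entire content is recognizing $\sqrt{\bsigma_{\bv}(s,a)}$ as an $L^2(\bP_{s,a})$ seminorm of a centered vector so that the triangle inequality applies, plus the elementary fact that the mean minimizes $L^2$ distance to constants. The one step I would make sure to spell out explicitly is this identification and the resulting Minkowski/triangle inequality for the standard-deviation functional; everything else is arithmetic with the fact that $\bP_{s,a}$ sums to one.
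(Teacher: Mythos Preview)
Your proposal is correct and matches the paper's proof almost exactly. The paper's argument is precisely your parenthetical ``equivalent route'': it invokes the triangle inequality for standard deviation, $\sqrt{\bsigma_{\bv}}\le\sqrt{\bsigma_{\bv^*}}+\sqrt{\bsigma_{\bv-\bv^*}}$, and then bounds $\bsigma_{\bv-\bv^*}(s,a)\le\bP_{s,a}^\top(\bv-\bv^*)^2\le\epsilon^2$. Your main route via the $L^2(\bP_{s,a})$ seminorm and the mean-minimizes-$L^2$-distance characterization is just a more explicit unpacking of the same triangle inequality.
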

Next we show that in Line~\ref{alg1: compute g}, the computed $\bg^{(i)}$ concentrates to and is an overestimate of $\bP[\bv^{(i)}  - \bv^{(0)}]$ with high probability.
\begin{lemma}
	\label{lemma:bounds on g}
	Let $\bg^{(i)}$ be the estimate of $\bP\big[\bv^{(i)}  - \bv^{(0)}\big]$ defined in Line~\ref{alg1: compute g} of Algorithm~\ref{alg-halfErr}. 
	Then conditioning on the event that $\norm{\bv^{(i)} - \bv^{(0)}}_{\infty}\le 2u$, with probability at least $1-\delta/R$, 
	\vspace{-1mm}
	\[
		\bP  \big[\bv^{(i)}  - \bv^{(0)}\big] - \frac{(1-\gamma)u}{4}\cdot\one\le \bg^{(i)} \le \bP  \big[\bv^{(i)}  - \bv^{(0)}\big]
	\]
	provided appropriately chosen constants $c_1$, $c_2$, and $c_3$ in Algorithm~\ref{alg-halfErr}.
\end{lemma}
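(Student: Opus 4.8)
\textbf{Proof proposal for Lemma~\ref{lemma:bounds on g}.}

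The plan is to view $\bg^{(i)}(s,a)$ as an empirical mean of $m_2$ i.i.d. samples of the random variable $X = \bv^{(i)}(s') - \bv^{(0)}(s')$ with $s' \sim \bP_{s,a}$, shifted down by the fixed offset $(1-\gamma)u/8$. Conditioned on $\|\bv^{(i)} - \bv^{(0)}\|_\infty \le 2u$, the random variable $X$ takes values in an interval of width at most $2u$, and its expectation is exactly $\bP_{s,a}^\top[\bv^{(i)} - \bv^{(0)}]$. First I would apply Hoeffding's inequality to the unshifted empirical mean $\bar{X} = \frac{1}{m_2}\sum_j [\bv^{(i)}(\wt s_{s,a}^{(j)}) - \bv^{(0)}(\wt s_{s,a}^{(j)})]$, obtaining that with probability at least $1 - \delta/(R|\cS||\cA|)$ for a single pair $(s,a)$,
\[
\big| \bar{X} - \bP_{s,a}^\top[\bv^{(i)} - \bv^{(0)}] \big| \le 2u \sqrt{\frac{\log(2R|\cS||\cA|\delta^{-1})}{2m_2}}.
\]
Then I would take a union bound over all $|\cS||\cA|$ state-action pairs so that the bound holds simultaneously with probability at least $1 - \delta/R$.

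The second step is to choose the constant $c_3$ in $m_2 = c_3 \beta^2 \log[2R|\cS||\cA|\delta^{-1}]$ large enough that the Hoeffding deviation $2u\sqrt{\log(2R|\cS||\cA|\delta^{-1})/(2m_2)}$ is at most $(1-\gamma)u/8 = u/(8\beta)$. Since $m_2 \propto \beta^2 \log[2R|\cS||\cA|\delta^{-1}]$, the deviation scales like $u/(\sqrt{c_3}\,\beta)$ up to an absolute constant, so taking $c_3$ a sufficiently large absolute constant makes the deviation at most $u/(8\beta)$. Combined with the offset subtraction, this gives on the good event:
\[
\bP_{s,a}^\top[\bv^{(i)}-\bv^{(0)}] - \frac{(1-\gamma)u}{4}\cdot\one \;\le\; \bar X - \frac{(1-\gamma)u}{8}\cdot\one \;=\; \bg^{(i)}(s,a) \;\le\; \bP_{s,a}^\top[\bv^{(i)}-\bv^{(0)}],
\]
where the lower bound uses deviation $\le (1-\gamma)u/8$ plus the subtracted $(1-\gamma)u/8$, and the upper bound uses that the positive deviation is cancelled by the subtracted offset. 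This is exactly the claimed two-sided sandwich.

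I do not expect any serious obstacle here; this is essentially a clean concentration argument. The one point requiring a little care is that the bound on $X$'s range depends on the conditioning event $\|\bv^{(i)} - \bv^{(0)}\|_\infty \le 2u$ — the samples $\wt s_{s,a}^{(j)}$ are drawn fresh in iteration $i$ and are independent of $\bv^{(i)}$ only in the sense needed once we condition appropriately, so I would be slightly careful to note that $\bv^{(i)}$ is determined by $\bQ^{(i-1)}$, hence by samples from previous iterations (and the initialization samples), and is therefore independent of the fresh batch $\{\wt s_{s,a}^{(j)}\}$; this independence is what lets us treat $\bar X$ as a genuine empirical mean of i.i.d.\ bounded variables conditioned on $\bv^{(i)}$. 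The tracking of which $\delta$-budget is spent where ($\delta/R$ per iteration, split as $\delta/(R|\cS||\cA|)$ per pair) is routine bookkeeping. Everything else is a direct substitution of the chosen parameters.
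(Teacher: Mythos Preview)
Your proposal is correct and follows essentially the same route as the paper's proof: apply Hoeffding's inequality to the empirical mean of the bounded random variables $\bv^{(i)}(s')-\bv^{(0)}(s')$, union bound over $\cS\times\cA$, choose $c_3$ large enough that the deviation is at most $(1-\gamma)u/8$, and then use the subtracted offset to convert the two-sided bound into the claimed one-sided sandwich. The paper does exactly this (invoking its stated Hoeffding theorem and the choice $m_2 = c_3\beta^2\log[2R|\cS||\cA|\delta^{-1}]$), and your additional remark about the fresh samples being independent of $\bv^{(i)}$ is a point the paper uses implicitly without comment; one cosmetic nit is that under the hypothesis $\|\bv^{(i)}-\bv^{(0)}\|_\infty\le 2u$ alone the range of $X$ is $4u$ rather than $2u$, but this only affects the absolute constant in $c_3$ and is absorbed by ``appropriately chosen constants.''
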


Now we present the key contraction lemma, in which we set the constants, $c_1, c_2, c_3$, in Algorithm~\ref{alg-halfErr} to be sufficiently large (e.g., $c_1 \ge 4, c_2\ge8192, c_3\ge128$).
Note that these constants only need to be sufficiently large so that the concentration inequalities hold.
\begin{lemma}
	\label{lemma:induction lemma}
Let $\bQ^{(i)}$ be the estimated $Q$-function of $\bv^{(i)}$ in Line~\ref{alg: q-func} of Algorithm 1.
Let $\pi^{(i)}$ and $\bv^{(i)}$ be estimated in iteration $i$, as defined in Line~\ref{alg: v1} and \ref{alg: v2}.
Then, with probability at least $1- 2\delta$, for all
$1\le i \le R$, 
\[
\bv^{(i-1)}\le \bv^{(i)}\le \cT_{\pi^{(i)}}[\bv^{(i)}],\quad \bQ^{(i)}\le \br+\gamma\bP \bv^{(i)}, 
\quad\text{and}\quad 
\bQ^* - \bQ^{(i)} \le \gamma \bP^{\pi^*}\big[\bQ^* - \bQ^{(i-1)}\big] 
 +\bxi, 
\]
where for $\alpha_1=m_1^{-1}L< 1$ the error vector $\bxi$ satisfies 
\[
{\bf0}\le \bxi\le C\sqrt{\alpha_1\bsigma_{\bv^*}} +\Big[{(1-\gamma)u}/C+ C\alpha_1^{3/4}\norm{\bv^{(0)}}_\infty\Big]\cdot\one,
\]
for some sufficiently large constant $C\ge 8$.
\end{lemma}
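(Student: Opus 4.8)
The plan is to prove all four claims simultaneously by induction on $i$, since the monotonicity statement $\bv^{(i-1)} \le \bv^{(i)} \le \cT_{\pi^{(i)}}[\bv^{(i)}]$ and the $Q$-function recursion are mutually entangled across iterations. First I would set up the good event: intersect the event from Lemma~\ref{lemma: emprical mean} (probability $\ge 1-\delta$, controlling $\wt{\bw}$ and $\wh{\bsigma}$) with the $R$ events from Lemma~\ref{lemma:bounds on g} (each probability $\ge 1-\delta/R$, controlling $\bg^{(i)}$), so that by a union bound everything holds simultaneously with probability $\ge 1-2\delta$. All subsequent reasoning is deterministic on this event. A preliminary step is to translate the conclusion of Lemma~\ref{lemma: emprical mean} into a clean two-sided bound on $\bw(s,a) = \wt{\bw}(s,a) - \sqrt{2\alpha_1\wh{\bsigma}(s,a)} - 4\alpha_1^{3/4}\|\bv^{(0)}\|_\infty - (2/3)\alpha_1\|\bv^{(0)}\|_\infty$: the shift is engineered so that $\bw \le \bP^\top\bv^{(0)}$ (one-sided error, for monotonicity) while $\bw \ge \bP^\top\bv^{(0)} - [\text{something} \lesssim \sqrt{\alpha_1\bsigma_{\bv^{(0)}}} + \alpha_1^{3/4}\|\bv^{(0)}\|_\infty]\cdot\one$. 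Here I'd invoke Lemma~\ref{lemma: variance triangle} with $\epsilon = u$ (since $\bv^* - \bv^{(0)} \le u\one$ and $\bv^{(0)} \le \bv^*$ will follow, giving $\|\bv^{(0)} - \bv^*\|_\infty \le u$) to replace $\sqrt{\bsigma_{\bv^{(0)}}}$ by $\sqrt{\bsigma_{\bv^*}} + u\cdot\one$, folding the extra $u\cdot\one$ term into the $(1-\gamma)u/C$ slack by choosing $C$ large. This is what lets the final error vector $\bxi$ be stated in terms of $\bsigma_{\bv^*}$ rather than $\bsigma_{\bv^{(0)}}$.

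For the inductive step I would proceed in the following order. (i) \emph{Upper bound $\bQ^{(i)} \le \br + \gamma\bP\bv^{(i)}$:} by Line~\ref{alg: q-func}, $\bQ^{(i)} = \br + \gamma(\bw + \bg^{(i)})$; since $\bw \le \bP^\top\bv^{(0)}$ and $\bg^{(i)} \le \bP[\bv^{(i)} - \bv^{(0)}]$ (Lemma~\ref{lemma:bounds on g}), adding gives $\bw + \bg^{(i)} \le \bP\bv^{(i)}$. This requires the hypothesis $\|\bv^{(i)} - \bv^{(0)}\|_\infty \le 2u$ of Lemma~\ref{lemma:bounds on g}, which I would discharge from the induction hypothesis $\bv^{(i-1)} \le \bv^{(i)}$ together with $\bv^* - \bv^{(i-1)} \le \ldots$ — more precisely from $\bv^{(0)} \le \bv^{(i)} \le \bv^*$ (monotone increasing, never exceeding $\bv^*$) and $\bv^* - \bv^{(0)} \le u\one$. (ii) \emph{Monotonicity $\bv^{(i-1)} \le \bv^{(i)} \le \cT_{\pi^{(i)}}[\bv^{(i)}]$:} the left inequality is immediate from Line~\ref{alg: v2} (the explicit $\max$ with $\bv^{(i-1)}$). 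For the right inequality: on states where $\bv^{(i)}(s) = \bv(\bQ^{(i-1)})(s)$ and $\pi^{(i)}(s) = \pi(\bQ^{(i-1)})(s)$, we have $\bv^{(i)}(s) = \bQ^{(i-1)}(s,\pi^{(i)}(s)) \le (\br + \gamma\bP\bv^{(i-1)})_{s,\pi^{(i)}(s)} \le (\br + \gamma\bP\bv^{(i)})_{s,\pi^{(i)}(s)} = \cT_{\pi^{(i)}}[\bv^{(i)}](s)$, using step (i) at index $i-1$ and $\bv^{(i-1)} \le \bv^{(i)}$; on states where the $\max$ in Line~\ref{alg: v2} reverted to the old value, $\bv^{(i)}(s) = \bv^{(i-1)}(s) \le \cT_{\pi^{(i-1)}}[\bv^{(i-1)}](s) = (\br + \gamma\bP\bv^{(i-1)})_{s,\pi^{(i-1)}(s)} \le \cT_{\pi^{(i)}}[\bv^{(i)}](s)$ since $\pi^{(i)}(s) = \pi^{(i-1)}(s)$ there and $\bv^{(i-1)} \le \bv^{(i)}$. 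The base case uses the input guarantee $\bv^{(0)} \le \cT_{\pi^{(0)}}\bv^{(0)}$. Note this also yields $\bv^{(i)} \le \bv^{\pi^{(i)}} \le \bv^*$ by iterating $\cT_{\pi^{(i)}}$, which retroactively justifies the $\|\bv^{(i)} - \bv^{(0)}\|_\infty \le 2u$ (indeed $\le u$) claim used above. (iii) \emph{The $Q$-contraction:} write $\bQ^* - \bQ^{(i)} = \bQ^* - \br - \gamma(\bw + \bg^{(i)}) = \gamma\bP\bv^* - \gamma\bw - \gamma\bg^{(i)}$. Decompose $\gamma\bP\bv^* - \gamma\bw - \gamma\bg^{(i)} = \gamma\bP\bv^{(i)} - \gamma\bw - \gamma\bg^{(i)} + \gamma\bP(\bv^* - \bv^{(i)})$. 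The first three terms are bounded above by $\gamma \cdot (\text{lower-bound slack on } \bw) + \gamma\cdot((1-\gamma)u/4)\one \le \bxi$ using the two-sided bounds from the preliminary step and Lemma~\ref{lemma:bounds on g}. For the term $\gamma\bP(\bv^* - \bv^{(i)})$: since $\bv^*(s') = \max_a \bQ^*(s',a) \ge \bQ^*(s',\pi^*(s'))$ and $\bv^{(i)}(s') = \max_a \bQ^{(i-1)}(s',a) \ge \bQ^{(i-1)}(s',\pi^*(s'))$... hmm, actually I want the inequality the other way for $\bv^{(i)}$. The standard trick: $\bv^*(s') - \bv^{(i)}(s') = \bQ^*(s',\pi^*(s')) - \max_a \bQ^{(i-1)}(s',a) \le \bQ^*(s',\pi^*(s')) - \bQ^{(i-1)}(s',\pi^*(s')) = (\bQ^* - \bQ^{(i-1)})(s',\pi^*(s'))$ (valid since $\bv^{(i)} \ge \bv(\bQ^{(i-1)})$ after the Line~\ref{alg: v2} adjustment — wait, Line~\ref{alg: v2} only increases, so $\bv^{(i)}(s) \ge \bv(\bQ^{(i-1)})(s) = \max_a \bQ^{(i-1)}(s,a)$, good). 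Hence $\gamma\bP(\bv^* - \bv^{(i)}) \le \gamma\bP^{\pi^*}(\bQ^* - \bQ^{(i-1)})$ by definition of $\bP^{\pi^*}$ acting on $Q$-functions. Combining gives exactly $\bQ^* - \bQ^{(i)} \le \gamma\bP^{\pi^*}[\bQ^* - \bQ^{(i-1)}] + \bxi$. (iv) \emph{Nonnegativity $\bxi \ge 0$ and the upper bound on $\bxi$:} nonnegativity is clear since $\bxi$ is a sum of nonnegative terms (all the shifts are subtractions of nonnegative quantities, so they contribute nonnegatively to $\bQ^* - \bQ^{(i)}$); the upper bound is the bookkeeping of collecting $\gamma\sqrt{2\alpha_1\wh{\bsigma}}$, $\gamma \cdot 4\alpha_1^{3/4}\|\bv^{(0)}\|_\infty$, $\gamma(2/3)\alpha_1\|\bv^{(0)}\|_\infty$, the $\sqrt{2\alpha_1^{-1}}$-type error from Lemma~\ref{lemma: emprical mean}, and $\gamma(1-\gamma)u/4$, then using $\alpha_1 \le 1$ to absorb $\alpha_1$ into $\alpha_1^{3/4}$, and using Lemma~\ref{lemma: variance triangle} to pass from $\wh{\bsigma}$ / $\bsigma_{\bv^{(0)}}$ to $\bsigma_{\bv^*}$, all with constants chosen so the total fits under $C\sqrt{\alpha_1\bsigma_{\bv^*}} + [(1-\gamma)u/C + C\alpha_1^{3/4}\|\bv^{(0)}\|_\infty]\cdot\one$ for $C \ge 8$.

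I expect the main obstacle to be step (ii), the monotonicity argument, specifically handling the interaction between the per-state overwrite in Line~\ref{alg: v2} and the policy bookkeeping: one must carefully track, state by state, whether $\pi^{(i)}(s)$ came from $\pi(\bQ^{(i-1)})$ or was inherited as $\pi^{(i-1)}(s)$, and verify $\bv^{(i)} \le \cT_{\pi^{(i)}}[\bv^{(i)}]$ in both cases using the induction hypothesis at the \emph{previous} iteration (the upper bound $\bQ^{(i-1)} \le \br + \gamma\bP\bv^{(i-1)}$ from step (i) at index $i-1$, the monotonicity $\bv^{(i-1)} \le \cT_{\pi^{(i-1)}}\bv^{(i-1)}$, and $\bv^{(i-1)} \le \bv^{(i)}$). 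The secondary delicate point is making sure the hypothesis $\|\bv^{(i)} - \bv^{(0)}\|_\infty \le 2u$ needed to invoke Lemma~\ref{lemma:bounds on g} is available \emph{before} it is used in step (i); this is resolved by noting it follows from the monotonicity/boundedness established inductively at iterations $< i$ (giving $\bv^{(0)} \le \bv^{(i)} \le \bv^*$ hence $0 \le \bv^{(i)} - \bv^{(0)} \le \bv^* - \bv^{(0)} \le u\one$), so the induction must be organized to carry "$\bv^{(0)} \le \bv^{(j)} \le \bv^*$ for all $j \le i$" as part of the hypothesis — but this is exactly what the monotonicity chain plus the iterated-$\cT_{\pi^{(j)}}$ argument delivers. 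Everything else is concentration bookkeeping already packaged by Lemmas~\ref{lemma: emprical mean}--\ref{lemma:bounds on g}.
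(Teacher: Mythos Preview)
Your proposal is correct and follows essentially the same route as the paper: condition on the good event from Lemma~\ref{lemma: emprical mean} and the $R$ events from Lemma~\ref{lemma:bounds on g}, derive the two-sided bound on $\bw$ (using Lemma~\ref{lemma: variance triangle} to pass from $\bsigma_{\bv^{(0)}}$ to $\bsigma_{\bv^*}$), then induct on $i$ to establish monotonicity via the per-state case split on Line~\ref{alg: v2}, the bound $\bQ^{(i)}\le \br+\gamma\bP\bv^{(i)}$, and the contraction $\bQ^*-\bQ^{(i)}\le \gamma\bP^{\pi^*}[\bQ^*-\bQ^{(i-1)}]+\bxi$ via $\bv^{(i)}\ge \bv(\bQ^{(i-1)})$. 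Your case split (overwrite vs.\ no overwrite) is in fact slightly cleaner than the paper's (which splits on $\pi^{(i)}(s)\neq\pi^{(i-1)}(s)$ vs.\ $\pi^{(i)}(s)=\pi^{(i-1)}(s)$ and tacitly assumes the latter implies the overwrite happened), and you correctly flag that the hypothesis $\|\bv^{(i)}-\bv^{(0)}\|_\infty\le 2u$ for Lemma~\ref{lemma:bounds on g} is supplied by the inductive monotonicity chain $\bv^{(0)}\le\bv^{(i)}\le\bv^*$.
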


Using the previous lemmas we can prove the guarantees of Algorithm~\ref{alg-halfErr}.


\begin{proposition}
	\label{thm:alg 1}
	On an input value vector $\bv^{(0)}$, policy $\pi^{(0)}$, and parameters $u\in(0, (1-\gamma)^{-1}], \delta\in(0,1)$ such that $\bv^{(0)}\le \cT_{\pi^{(0)}}[\bv^{(0)}]$, and $\bv^*-\bv^{(0)} \le u \one$,
	Algorithm~\ref{alg-halfErr} halts in time
	\[
	O\bigg[\bigg(u^{-2}
	+
	\log\frac{1}{(1-\gamma)u}\bigg)\cdot \frac{|\cS||\cA|\cdot \log(|\cS||\cA|\delta^{-1})}{(1-\gamma)^{3}}
	\bigg]
	\]
	and outputs values $\bv$ and policy $\pi$ such that 
$	\bv \le \cT_{\pi}(\bv)$
 and 
$ \bv^*-\bv \le (u/2) \one$
with probability at least $1-\delta$, provided appropriately chosen constants, $c_1, c_2, c_3$.
\end{proposition}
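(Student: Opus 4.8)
The plan is to combine the three structural facts supplied by Lemmas~\ref{lemma: emprical mean}--\ref{lemma:induction lemma}: first establish the running time, then unroll the approximate contraction from Lemma~\ref{lemma:induction lemma} to control the final $Q$-error, and finally invoke the total-variance bound to show the accumulated error is at most $u/2$. For the running time, I would simply tally the sample/computation cost: the initialization draws $m_1 = O(\beta^3 u^{-2}\log(|\cS||\cA|\delta^{-1}))$ samples per $(s,a)$ pair, and each of the $R = O(\beta\log(\beta u^{-1}))$ iterations draws $m_2 = O(\beta^2\log(R|\cS||\cA|\delta^{-1}))$ samples per $(s,a)$ pair; since the generative model answers in $O(1)$ time and all per-sample updates (forming $\wt\bw$, $\wh\bsigma$, $\bg^{(i)}$, applying the Bellman operator over $|\cA|$ actions) are $O(1)$ amortized, the total is $O\big[(m_1 + R m_2)|\cS||\cA|\big] = O\big[(u^{-2} + \log\tfrac{1}{(1-\gamma)u})\cdot \tfrac{|\cS||\cA|\log(|\cS||\cA|\delta^{-1})}{(1-\gamma)^3}\big]$, absorbing the $\log R$ and $\log(1/\delta)$ factors appropriately. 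I would also note here that the monotonicity output condition $\bv \le \cT_\pi(\bv)$ is exactly the first inequality in the conclusion of Lemma~\ref{lemma:induction lemma} at $i = R$, so that half of the claim is immediate.

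Next, condition on the (probability $\ge 1 - 2\delta$) event from Lemma~\ref{lemma:induction lemma} and iterate the recursion $\bQ^* - \bQ^{(i)} \le \gamma \bP^{\pi^*}[\bQ^* - \bQ^{(i-1)}] + \bxi$. Starting from $\bQ^{(0)}$, whose error $\bQ^* - \bQ^{(0)}$ is bounded (using $\bv^* - \bv^{(0)} \le u\one$ and the explicit shift in the definition of $\bw$, together with Lemma~\ref{lemma: emprical mean} and Lemma~\ref{lemma: variance triangle} to replace $\bsigma_{\bv^{(0)}}$ by $\bsigma_{\bv^*}$), unrolling gives
\[
\bzero \le \bQ^* - \bQ^{(R)} \le \gamma^R (\bP^{\pi^*})^R [\bQ^* - \bQ^{(0)}] + \sum_{i=0}^{R-1} \gamma^i (\bP^{\pi^*})^i \bxi.
\]
The first term is at most $\gamma^R \cdot O(u^{-1}(1-\gamma)^{-1})\one \le (u/8)\one$ by the choice $R = \lceil c_1 \beta \ln(\beta u^{-1})\rceil$ with $c_1$ large. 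For the second term, substitute the bound on $\bxi$ from Lemma~\ref{lemma:induction lemma}: the contributions from the $[(1-\gamma)u/C]\one$ term sum geometrically to $O(u/C)\one$, and the $C\alpha_1^{3/4}\|\bv^{(0)}\|_\infty\one$ term contributes $O(\beta C \alpha_1^{3/4}\beta)\one$, which is $o(u)$ by the choice $m_1 \propto \beta^3 u^{-2}$ (so $\alpha_1 = m_1^{-1}L \propto \beta^{-3}u^2$, hence $\beta^2\alpha_1^{3/4} \propto \beta^{-1/4}u^{3/2} \ll u$). All of these can be forced below $u/8$ each by taking the constants $c_1, c_2, c_3$ (and $C$) large enough.

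The main obstacle — and the step where the total-variance technique is essential — is bounding $\sum_{i=0}^{R-1}\gamma^i (\bP^{\pi^*})^i \cdot C\sqrt{\alpha_1 \bsigma_{\bv^*}}$ by $(u/8)\one$. Here I would apply the Cauchy--Schwarz-type inequality sketched in Section~\ref{sec:overview}:
\[
\sum_{i=0}^{\infty} \gamma^i (\bP^{\pi^*})^i \sqrt{\bsigma_{\bv^*}} \le \Big(\tfrac{1}{1-\gamma}\Big)^{1/2}\Big(\sum_{i=0}^{\infty}\gamma^{2i}(\bP^{\pi^*})^i \bsigma_{\bv^*}\Big)^{1/2},
\]
and then invoke the law of total variance for the optimal policy, which gives $\sum_{i\ge 0}\gamma^{2i}(\bP^{\pi^*})^i\bsigma_{\bv^*} \le (1-\gamma)^{-2}\one$ since the total discounted return is bounded by $(1-\gamma)^{-1}$. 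This yields $\sum_i \gamma^i(\bP^{\pi^*})^i\sqrt{\bsigma_{\bv^*}} \le (1-\gamma)^{-3/2}\one$, so the term is $O(C\sqrt{\alpha_1}(1-\gamma)^{-3/2})\one = O(C\sqrt{\beta^{-3}u^2}\cdot\beta^{3/2})\one = O(Cu)\one$ — wait, this is $\Theta(u)$, not $o(u)$, so it is exactly here that the constant $c_2$ in $m_1$ must be chosen large (making $\alpha_1$ small by a large constant factor) to push this below $u/8$. Finally, combining the three pieces gives $\bzero \le \bQ^* - \bQ^{(R)} \le (3u/8)\one \le (u/2)\one$, and since $\bv^* = \bv(\bQ^*)$ and $\bv^{(R)} = \bv(\bQ^{(R)})$ (coordinatewise max), monotonicity of the max operation gives $\bzero \le \bv^* - \bv^{(R)} \le (u/2)\one$, as required. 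The overall failure probability is at most $2\delta$ from Lemma~\ref{lemma:induction lemma} (which already subsumes the events of the earlier lemmas), and rescaling $\delta \to \delta/2$ at the outset — absorbed into the logarithmic factors — gives the stated $1 - \delta$.
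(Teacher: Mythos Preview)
Your proof plan is correct and follows essentially the same route as the paper: unroll the recursion from Lemma~\ref{lemma:induction lemma} into a geometric head term plus $(\bI-\gamma\bP^{\pi^*})^{-1}\bxi$, kill the head term with the choice of $R$, and control the variance piece of $\bxi$ via the total-variance bound (the paper packages your Cauchy--Schwarz/law-of-total-variance step as Lemma~\ref{lemma:variance bound}). Two small slips worth fixing: (i) in the algorithm $\bv^{(R)}$ is built from $\bQ^{(R-1)}$, not $\bQ^{(R)}$, so you should unroll only to $R-1$ and use $\bv^{(R)}\ge \bv(\bQ^{(R-1)})$ as the paper does; (ii) the initial $Q$-gap $\bQ^*-\bQ^{(0)}$ is bounded by $O((1-\gamma)^{-1})$, not $O(u^{-1}(1-\gamma)^{-1})$---but this does not affect the argument since $\gamma^{R}$ annihilates either bound for $c_1$ large enough.
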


We prove this proposition by iteratively applying Lemma~\ref{lemma:induction lemma}. Suppose 
$\bv^{(R)}$ is the output of the algorithm, after $R$ iterations. We show
$
\bv^* - \bv^{(R)} \le\gamma^{R-1}\bP^{\pi^*}\big[\bQ^* - \bQ^{0}\big] + (\bI-\gamma\bP^{\pi^*})^{-1}\bxi. 
$
Notice that $(\bI-\gamma\bP^{\pi^*})^{-1}\bxi$ is related to $(\bI-\gamma\bP^{\pi^*})^{-1}\sqrt{\bsigma_{\bv^*}}$.
We then apply the variance analytical tools presented in Section~\ref{sec:var_bounds} to show that 
$(\bI-\gamma\bP^{\pi^*})^{-1}\bxi \le (u/4) \one$ when setting the constants properly in Algorithm~\ref{alg-halfErr}.
We refer this technique as the \emph{total-variance technique}, since $\|(\bI-\gamma\bP^{\pi^*})^{-1}\sqrt{\bsigma_{\bv^*}}\|_{\infty}^2\le O[(1-\gamma)^{-3}]$ 
instead of a na\"ive bound of $(1-\gamma)^{-4}$. 
We complete the proof by choosing $R=\wt{\Theta}((1-\gamma)^{-1}\log(u^{-1}))$ and showing that $\gamma^{R-1}\bP^{\pi^*}\big[\bQ^* - \bQ^{0}\big]\le  (u/4) \one$.

\subsection{From Halving the Error to Arbitrary Precision}
\label{sub:halving}

In the previous section, we provided an algorithm that on an input policy, outputs a policy with value vector that has $\ell_\infty$ distance to the optimal value vector only half of that of the input one.
In this section, we give a complete policy computation algorithm by by showing that it is possible to apply this error ``halving'' procedure iteratively. We summarize our meta algorithm in Algorithm~\ref{alg-meta}.
Note that in the algorithm, each call of $\halfErr$ draws new samples from the sampling oracle. 
We refer in this section to Algorithm~\ref{alg-halfErr} as a subroutine \halfErr, which given an input MDP $\cM$ with a sampling oracle, an input value function $\bv^{(i)}$, and an input policy $\pi^{(i)}$, outputs an value function  $\bv^{(i+1)}$ and a policy $\pi^{(i+1)}$.

{\small
\begin{algorithm}[tb!]\caption{Meta Algorithm\label{alg-meta}}
	\begin{algorithmic}[1]
		\State 
		\textbf{Input:} A sampling oracle of some $\cM=(\cS, \cA, \br, \bP, \gamma)$, $\epsilon>0, \delta\in(0,1)$
		\State 
		\textbf{Initialize:} $\bv^{(0)}\gets{\bf 0}$, $\pi^{(0)}\gets$ arbitrary policy, $R\gets \Theta[\log(\epsilon^{-1}(1-\gamma)^{-1})]$
		\For{$i=\{1,2,\ldots,R\}$} 
		\State //\emph{\halfErr~ is initialized with QVI$(u=2^{-i+1}(1-\gamma)^{-1}, \delta, \bv^{(0)}=\bv^{(i-1)}, \pi^{(0)}=\pi^{(i-1)})$}
		\State $\bv^{(i)},\pi^{(i)}\gets\halfErr\gets \bv^{(i-1)},\pi^{(i-1)}$
		\EndFor
		\State 
		\textbf{Output:} $\bv^{(R)},\pi^{(R)}$.
	\end{algorithmic}
\end{algorithm}
}
Combining Algorithm~\ref{alg-meta} and Algorithm~\ref{alg-halfErr}, we are ready to present main result.
\begin{theorem}
	\label{thm:dmdp1}
    Let $\cM=(\cS, \cA, \bP, \br, \gamma)$ be a DMDP with a generative model. 
    Suppose we can sample a state from each probability vector $\bP_{s,a}$ within time $O(1)$.	Then for any $\epsilon,\delta\in(0,1)$, there exists an algorithm that halts in time 
	\[
		T:=O\left[\frac{|\cS||\cA|}{(1-\gamma)^3 \epsilon^2} \log \left(\frac{|\cS||\cA|}{ (1-\gamma)\delta\epsilon}\right)\log\left(\frac{1}{(1-\gamma)\epsilon}\right)\right]
	\]
	and obtains a policy $\pi$ such that $\bv^* - \epsilon\one\le \bv^{\pi} \le \bv^*$, with probability at least $1-\delta$ where $\bv^{*}$ is the optimal value of $\cM$. 
	The algorithm uses space $O(|\cS||\cA|)$ and queries the generative model for at most $O(T)$ fresh samples.
\end{theorem}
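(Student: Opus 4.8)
The plan is to drive the meta-algorithm (Algorithm~\ref{alg-meta}), which performs $R$ successive calls to the error-halving routine \halfErr\ (Algorithm~\ref{alg-halfErr}), and to verify it by induction on the call index $i\in\{0,1,\dots,R\}$ maintaining the joint invariant $\bv^{(i)}\le \cT_{\pi^{(i)}}[\bv^{(i)}]$ and $\bv^*-\bv^{(i)}\le u_i\one$, where $u_i:=2^{-i}(1-\gamma)^{-1}$. For the base case $i=0$ we take $\bv^{(0)}=\mathbf 0$ and $\pi^{(0)}$ arbitrary: since $\br\ge\mathbf 0$ entrywise we have $\cT_{\pi^{(0)}}[\mathbf 0]\ge\mathbf 0=\bv^{(0)}$, and since $\br\le\mathbf 1$ and $\gamma\in(0,1)$ we have $\mathbf 0\le\bv^*\le(1-\gamma)^{-1}\one=u_0\one$. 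For the inductive step, call $i$ runs \halfErr\ on $(\bv^{(i-1)},\pi^{(i-1)})$ with parameter $u=u_{i-1}=2^{-i+1}(1-\gamma)^{-1}\in(0,(1-\gamma)^{-1}]$ and confidence $\delta/R$; by the inductive hypothesis the pair $(\bv^{(i-1)},\pi^{(i-1)})$ and the value $u_{i-1}$ satisfy exactly the hypotheses of Proposition~\ref{thm:alg 1}, which therefore guarantees that, except with probability $\delta/R$, the output $(\bv^{(i)},\pi^{(i)})$ obeys $\bv^{(i)}\le\cT_{\pi^{(i)}}[\bv^{(i)}]$ and $\bv^*-\bv^{(i)}\le(u_{i-1}/2)\one=u_i\one$, which is the invariant at level $i$. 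A union bound over the $R$ calls shows that all invariants hold simultaneously with probability at least $1-\delta$.

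Next I would convert the terminal invariant into the stated policy guarantee. Choosing $R=\lceil\log_2(\epsilon^{-1}(1-\gamma)^{-1})\rceil=\Theta(\log\tfrac{1}{(1-\gamma)\epsilon})$ forces $u_R=2^{-R}(1-\gamma)^{-1}\le\epsilon$, so $\bv^*-\bv^{(R)}\le\epsilon\one$. Iterating the monotone $\gamma$-contraction $\cT_{\pi^{(R)}}$ on $\bv^{(R)}\le\cT_{\pi^{(R)}}[\bv^{(R)}]$ (exactly the monotonicity argument of Section~\ref{sec:overview}) gives $\bv^{(R)}\le\cT_{\pi^{(R)}}^k[\bv^{(R)}]\to\bv^{\pi^{(R)}}$ as $k\to\infty$, hence $\bv^{(R)}\le\bv^{\pi^{(R)}}$; combined with $\bv^{\pi^{(R)}}\le\bv^*$ (optimality of $\bv^*$ among all policies) this yields $\bv^*-\epsilon\one\le\bv^{(R)}\le\bv^{\pi^{(R)}}\le\bv^*$, which gives the desired policy $\pi:=\pi^{(R)}$.

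For the resource bounds I would sum the per-call cost from Proposition~\ref{thm:alg 1}. Call $i$, run with parameter $u_{i-1}=2^{-i+1}(1-\gamma)^{-1}$ and confidence $\delta/R$, costs $O\!\big[(u_{i-1}^{-2}+\log\tfrac{1}{(1-\gamma)u_{i-1}})\cdot\tfrac{|\cS||\cA|\log(|\cS||\cA|R\delta^{-1})}{(1-\gamma)^{3}}\big]$. The factors $u_{i-1}^{-2}=4^{i-1}(1-\gamma)^2$ form a geometric progression summing to $O((1-\gamma)^2 4^R)=O(\epsilon^{-2})$ (its last term already being $u_{R-1}^{-2}=\Theta(\epsilon^{-2})$), while the factors $\log\tfrac{1}{(1-\gamma)u_{i-1}}=(i-1)\ln 2$ sum to $O(R^2)=O(\log^2\tfrac{1}{(1-\gamma)\epsilon})$, which is of lower order; using $\log(|\cS||\cA|R\delta^{-1})=O(\log\tfrac{|\cS||\cA|}{(1-\gamma)\delta\epsilon})$ and collecting terms bounds the total running time by $T$. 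The number of fresh samples is, by construction of the two algorithms, proportional to the running time (each sample is drawn and processed in $O(1)$ time and the work is dominated by drawing and averaging samples), hence $O(T)$; and since every empirical average ($\wt{\bw},\wh{\bsigma},\bg^{(i)}$) can be accumulated in a streaming fashion and the vectors $\bw,\wt{\bw},\wh{\bsigma},\bQ^{(i)},\bv^{(i)},\bg^{(i)}$ have $O(|\cS||\cA|)$ entries and can be reused across calls, the space is $O(|\cS||\cA|)$.

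The substantive work sits entirely upstream of this argument: it is Proposition~\ref{thm:alg 1} and, beneath it, Lemma~\ref{lemma:induction lemma} together with the total-variance estimate $\|(\bI-\gamma\bP^{\pi^*})^{-1}\sqrt{\bsigma_{\bv^*}}\|_\infty^2=O((1-\gamma)^{-3})$ coming from the law of total variance and Section~\ref{sec:var_bounds}, which is exactly what lets $m_1=\widetilde{\Theta}((1-\gamma)^{-3}u^{-2})$ samples suffice for one halving step instead of $(1-\gamma)^{-4}u^{-2}$. Given those, Theorem~\ref{thm:dmdp1} follows by the induction-plus-geometric-sum above; the only places needing care are checking that the precondition of Proposition~\ref{thm:alg 1} is reproduced verbatim at every level (in particular $u_{i-1}\le(1-\gamma)^{-1}$ throughout) and carrying the $\delta/R$ rescaling correctly through the union bound.
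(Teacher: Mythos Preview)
Your proposal is correct and follows essentially the same route as the paper: instantiate Algorithm~\ref{alg-meta} with Algorithm~\ref{alg-halfErr} as \halfErr, verify the halving invariant by induction via Proposition~\ref{thm:alg 1}, convert the terminal monotonicity $\bv^{(R)}\le\cT_{\pi^{(R)}}[\bv^{(R)}]$ to $\bv^{(R)}\le\bv^{\pi^{(R)}}$, and sum the per-call costs. The paper's own proof is a two-line appeal to Propositions~\ref{thm:alg 1} and~\ref{prop:meta} plus ``a straightforward calculation''; your explicit induction with per-call confidence $\delta/R$ and your geometric-series summation of the $u_{i-1}^{-2}$ terms are a valid (and in fact slightly tighter) fleshing-out of exactly that argument.
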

\begin{remark}
In the above theorem, we require $\epsilon\in(0,1)$. 
For $\epsilon\ge 1$, our sample complexity may fail to be optimal. 
We leave this for a future project.
\end{remark}
\begin{remark}
	The full analysis of the halving algorithm is presented in Section~\ref{sec:half}.
	Our algorithm can be implemented in space $O(|\cS||\cA|)$ since in Algorithm~\ref{alg-halfErr}, the initialization phase can be done for each $(s,a)$ and compute $\bw(s,a), \wt{\bw}(s,a),$ $\wh{\bsigma}(s,a), \bQ^{(0)}(s,a)$ without storing the samples.
	The updates can be computed in space $O(|\cS||\cA|)$ as well.
\end{remark}

%
%
%
%

\vspace{-1mm}
\section{Concluding Remark}
In summary, for a discounted Markov Decision Process (DMDP) $\cM=(\cS, \cA, \bP, \br, \gamma)$ provided we can only access the transition function of the DMDP through a generative sampling model, we provide an algorithm which computes an $\epsilon$-approximate optimal (for $\epsilon\in(0,1)$) policy with probability $1 - \delta$ where both the time spent and number of sample taken is upper bounded by $\wt{O}((1-\gamma)^{-3}\epsilon^{-2}|\cS||\cA|)$.
This improves upon the previous best known bounds by a factor of $1/(1 - \gamma)$ and matches the the lower bounds proved in \cite{azar2013minimax} up to logarithmic factors.

The appendix is structured as follows. 
Section~\ref{sec:full model compare} surveys the existing runtime results for solving the DMDP when a full model is given.
 Section~\ref{sec:alg_value} provides an runtime optimal algorithm for computing approximate value functions (by directly combining \cite{azar2013minimax} and \cite{sidford2018variance}).
Section~\ref{sec:var_bounds} gives technical analysis and variance upper bounds for the total-variance technique.
Section~\ref{sec:lower bound} discusses sample complexity lower bounds for obtaining approximate policies with a generative sampling model.
Section~\ref{sec:missing proof} provides proofs to lemmas, propositions and theorems in the main text of the paper. 
Section~\ref{sec:finite_horizon} extends our method and results to the finite-horizon MDP and provides a nearly matching sample complexity lower bound.

\clearpage
\bibliographystyle{alpha}
\bibliography{colt_mdp_ref}


\clearpage
\appendix
\section{Previous Work on Solving DMDP with a Full Model}
\label{sec:full model compare}
Value iteration was proposed by \cite{bellman1957dynamic} to compute an exact optimal policy of a given DMDP in time $\mathcal{O}(({1-\gamma})^{-1}|\cS|^2|\cA| L {\log((1-\gamma)^{-1})})$, where $L$ is the total number of bits needed to represent the input; and it can find an approximate $\epsilon$-approximate solution in time $\mathcal{O}(|\cS|^2 |\cA| (1-\gamma)^{-1} \log(1/\epsilon(1-\gamma)))$; see e.g. \cite{tseng1990solving, littman1995complexity}. 
The policy iteration was introduced by  \cite{howard1960dynamic} shortly after, where the policy is monotonically improved according to its associated value function. Its complexity has also been analyzed extensively; see e.g. \cite{mansour1999complexity,ye2011simplex,scherrer2013improved}. Ye \cite{ye2011simplex} showed that policy iteration and the simplex method are strongly polynomial for DMDP and terminates in $\mathcal{O}(|\cS|^2|\cA| (1-\gamma)^{-1}\log(|\cS|(1-\gamma)^{-1}))$ number of iterations. Later \cite{hansen13} and \cite{scherrer2013improved} improved the iteration bound to $O(|\cS||\cA| (1-\gamma)^{-1} \log((1-\gamma)^{1}))$ for Howard's policy iteration method. 
A third approach is to formulate the nonlinear Bellman equation into a linear program \cite{d1963probabilistic, de1960problemes}, and solve it using standard linear program solvers, such as the simplex method by Dantzig \cite{dantzig2016linear} and the combinatorial interior-point algorithm by \cite{ye2005new}. \cite{lee2014path, lee2015efficient} showed that one can solve linear programs in $\wt{O}(\sqrt{\hbox{rank}(A)} )$ number of linear system solves, which, applied to DMDP, yields to a running time of $\wt{O}( |\cS|^{2.5} |\cA| L )$ for computing the exact policy and $\wt{O}( |\cS|^{2.5} |\cA| \log(1/\epsilon))$ for computing an $\epsilon$-optimal policy. \cite{sidford2018variance} further improved the complexity of value iteration by using randomization and variance reduction. See Table~\ref{table-exact} for comparable run-time results or computing the optimal policy when the MDP model is fully given. 
	\begin{table}[htb!]
		\begin{center}
			\small
			\centering
			\begin{tabular}{|>{\centering\arraybackslash}m{4cm}|>{\centering\arraybackslash}m{5cm}|>{\centering\arraybackslash}m{3cm}|}
				\hline
				\textbf{Algorithm} & \textbf{Complexity} & \textbf{References} \\ 
				\hline
				Value Iteration (exact) & $\S^2\A L \frac{\log(1/(1-\gamma))}{1-\gamma}$ & \cite{tseng1990solving, littman1995complexity}\\
				\hline
				Value Iteration & $\S^2\A \frac{\log(1/(1-\gamma)\epsilon)}{1-\gamma}$ & \cite{tseng1990solving, littman1995complexity}\\
				\hline
				Policy Iteration
				(Block Simplex) & $\frac{\S^4\A^2}{1-\gamma} \log(\frac1{1-\gamma})$  & \cite{ye2011simplex},\cite{scherrer2013improved}
				\\  \hline
				& &
				\\[-1em]
				Recent Interior Point Methods & $\wt{O}( \S^{2.5}\A L )$ $\wt{O}( |\cS|^{2.5} |\cA| \log(1/\epsilon))$  & \cite{lee2014path} 
				\\[-1em]
				& &
				\\  \hline
				Combinatorial Interior 
				Point Algorithm   & $\S^4\A^4\log\frac{\S}{1-\gamma}$  
				& \cite{ye2005new}
				\\  \hline
				& &
				\\[-1em]
				High Precision Randomized 
				Value Iteration & $ \wt{O} \bigg[\left(\nnz(P)  +\frac{|\cS| |\cA|}{(1 - \gamma)^3} \right) \log\left(\frac{1}{\epsilon\delta}\right) \bigg]$ & \cite{sidford2018variance}\\[-1em]
				& &
				\\
				\hline
			\end{tabular}
		\end{center}
		\caption{
			\small
			\textbf{Running Times to Solve DMDPs Given the Full MDP Model}: In this table, $\S$ is the number of states, $\A$ is the number of actions per state, $\gamma \in(0,1)$ is the discount factor, and $L$ is a complexity measure of the linear program formulation that is at most the total bit size to present the DMDP input. Rewards are bounded between 0 and 1.
			\label{table-exact}}
		\label{tab:literature_runtime_exact}
		\vspace{-1mm}
	\end{table}

\section{Sample and Time Efficient Value Computation}
\label{sec:alg_value}

In this section, we describe an algorithm that obtains an $\epsilon$-optimal values in time $\wt{O}(\epsilon^{-2}(1-\gamma)^{-3}|\cS||\cA|)$. Note that the time and number of samples of this algorithm is optimal (up to logarithmic factors) due to the lower bound in \cite{azar2013minimax} which also established this upper bound on the sample complexity (but not time complexity) of the problem. 


We achieve this by combining the algorithms in \cite{azar2013minimax} and \cite{sidford2018variance}. First, we use the ideas and analysis of \cite{azar2013minimax} to construct a sparse MDP where the optimal value function of this MDP approximates the optimal value function of the original MDP and then we run the high precision algorithm in \cite{sidford2018variance} on this sparsified MDP. We show that \cite{sidford2018variance} runs in nearly linear time on sparsified MDP. Since the number of samples taken to construct the sparsified MDP was the the optimal number of samples, to solve the problem, the ultimate running time we thereby achieve is nearly optimal as any algorithm needs spend time at least the number of samples to obtain these samples.

We include this for completeness but note that the approximate value function we show how to compute here does not suffice to compute policy of the MDP of comparable quality.
The greedy policy of an $\epsilon$-optimal value function is an $\epsilon/(1-\gamma)$-optimal policy in the worst case.
It has been shown in \cite{azar2013minimax} that the greedy policy of their value function is $\epsilon$-optimal if $\epsilon \le {(1-\gamma)^{1/2}|\cS|^{-1/2}}$.
However, when $\epsilon$ is so small, the seemingly sublinear runtime $\wt{O}((1-\gamma)^{-3}\cS||\cA|/\epsilon^2)$ essentially means a linear running time and sample complexity as $O((1-\gamma)^{-3}|\cS|^2|\cA|)$. The running time can be obtained by merely applying the result in \cite{sidford2018variance} (although with a slightly different computation model).

\subsection{The Sparsified DMDP}
\label{sec:sparsify mdp}
Suppose we are given a DMDP $\cM=(\cS, \cA, \br, \bP, \gamma)$ with a sampling oracle. To approximate the optimal value of this MDP, we perform a spasification procedure as in \cite{azar2013minimax}. 
Sparsification of DMDP is conducted as follows. Let $\delta>0,\epsilon>0$ be arbitrary. First we pick a number 
\begin{align}
\label{eqn:num samples per sa}
m=\Theta\left[\frac{1}{(1-\gamma)^3 \epsilon^2} \log \left(\frac{|\cS||\cA|}{\delta}\right)\right] ~.
\end{align}
For each $s\in \cS$ and each $a\in \cA$, we generate a sequence of independent samples from $\cS$ using the probability vector $\bP_{s,a}$
\[
s_{s,a}^{(1)}, s_{s,a}^{(2)}, \ldots, s_{s,a}^{(m)}.
\]
Next we construct a new and sparse probability vector $\wh{\bP}_{s,a}\in\Delta_{|\cS|}$ as
\[
\forall s'\in \cS: \wh{\bP}_{s,a}(s') = \frac{1}{m}\cdot\sum_{i=1}^m \one(s_{s,a}^{(i)}=s').
\]
Combining these $|\cS||\cA|$ new probability vectors, we obtain a new probability transition matrix $\wh{\bP}\in\RR^{\cS\times\cA\times\cS}$ with number of non-zeros 
\[
\nnz(\wh{\bP}) =
O\left[\frac{|\cS||\cA|}{(1-\gamma)^3 \epsilon^2} \log \left(\frac{|\cS||\cA|}{\delta}\right)\right] ~.
\]
Denote $\wh{\cM} = (\cS, \cA, \br, \wh{\bP}, \gamma)$ as the sparsified DMDP.
In the rest of this section, we use $\wh{\cdot}$ to represent the quantities corresponding to DMDP $\wh{\cM}$, e.g., $\wh{\bv}^*$ for the optimal value function, $\wh{\pi}^*$ for a optimal policy, and $\wh{\bQ}^*$ for the optimal $Q$-function. 
There is a strong approximation guarantee of the optimal $Q$-function of the sparsified MDP, presented as follows.

\begin{theorem}[\cite{azar2013minimax}]
	\label{thm:qvalue approx}
	Let $\cM$ be the original DMDP and $\wh{\cM}$ be the corresponding sparsified version.
	Let $\bQ^*$ be the optimal $Q$-function vector of the original DMDP and $\wh{\bQ}^*$ be the optimal $Q$-function of $\wh{\cM}$.
	Then with probability at least $1-\delta$ (over the randomness of the samples), 
	\[
	\|{\wh{\bQ}^* - \bQ^*}\|_{\infty}\le \epsilon.
	\]
\end{theorem}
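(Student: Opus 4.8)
The plan is to sandwich $\bQ^*-\wh\bQ^*$ between policy-evaluation errors of a fixed, sample-independent policy in the two models, reduce each to estimating $(\bP-\wh\bP)\bv^*$, and then replace the crude Hoeffding bound on that estimate by a variance-weighted Bernstein bound whose accumulation is controlled by the total-variance inequality of Section~\ref{sec:var_bounds}. Concretely, I would start from the optimal Bellman equations $\bQ^*=\br+\gamma\bP\bv^*$ and $\wh\bQ^*=\br+\gamma\wh\bP\wh\bv^*$, where $\bv^*=\bv(\bQ^*)$ and $\wh\bv^*=\bv(\wh\bQ^*)$, and bound $\|\wh\bQ^*-\bQ^*\|_\infty$ in each direction separately. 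For $\bQ^*-\wh\bQ^*$ I use optimality of $\wh\bQ^*$ in $\wh\cM$, namely $\wh\bQ^*\ge\wh\bQ^{\pi^*}$ for the true optimal policy $\pi^*$, to get $\bQ^*-\wh\bQ^*\le \bQ^{\pi^*}-\wh\bQ^{\pi^*}$; symmetrically I compare against $\wh\pi^*$ in the other direction. Since $\pi^*$ does not depend on the samples, both $\bQ^{\pi^*}$ and $\wh\bQ^{\pi^*}$ are $Q$-functions of one fixed policy, so the problem reduces to the error of evaluating a fixed policy under two transition models.

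For such a fixed policy $\pi$, writing $\bP^\pi$ for its $\cS\times\cS$ transition matrix and $\br^\pi$ for its reward vector, the two values satisfy $\bv^\pi=(\bI-\gamma\bP^\pi)^{-1}\br^\pi$ and $\wh\bv^\pi=(\bI-\gamma\wh\bP^\pi)^{-1}\br^\pi$. The resolvent identity $A^{-1}-B^{-1}=A^{-1}(B-A)B^{-1}$ with $A=\bI-\gamma\bP^\pi$ and $B=\bI-\gamma\wh\bP^\pi$ then gives $\bv^\pi-\wh\bv^\pi=\gamma(\bI-\gamma\bP^\pi)^{-1}(\bP^\pi-\wh\bP^\pi)\wh\bv^\pi$. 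I split $\wh\bv^\pi=\bv^*+(\wh\bv^\pi-\bv^*)$ so that the error is governed by a sample-independent main term $(\bP^\pi-\wh\bP^\pi)\bv^*$ together with a cross term $(\bP^\pi-\wh\bP^\pi)(\wh\bv^\pi-\bv^*)$.

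Because $\bv^*$ is a property of $\cM$ and hence independent of the samples defining $\wh\bP$, each coordinate of $(\bP^\pi-\wh\bP^\pi)\bv^*$ is a centered empirical mean of $m$ i.i.d.\ draws of $\bv^*(s')$, $s'\sim\bP_{s,\pi(s)}$. Bernstein's inequality and a union bound over the at most $|\cS||\cA|$ pairs give, coordinatewise with probability $1-\delta$, $|(\bP^\pi-\wh\bP^\pi)\bv^*|\le\sqrt{2L\,\bsigma_{\bv^*}/m}+\tfrac{2L}{3m}\norm{\bv^*}_\infty\one$ with $L=\log(|\cS||\cA|/\delta)$ and $\bsigma_{\bv^*}(s)=\var_{s'\sim\bP_{s,\pi(s)}}(\bv^*(s'))$. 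Multiplying by the nonnegative matrix $(\bI-\gamma\bP^\pi)^{-1}$ and invoking the total-variance bound $\norm{(\bI-\gamma\bP^\pi)^{-1}\sqrt{\bsigma_{\bv^*}}}_\infty\le\sqrt{C(1-\gamma)^{-3}}$ from Section~\ref{sec:var_bounds} turns the leading term into an $\ell_\infty$ bound of order $(1-\gamma)^{-3/2}\sqrt{L/m}$, while the $1/m$ term contributes only the lower-order $O((1-\gamma)^{-2}\norm{\bv^*}_\infty L/m)$.

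The one genuinely self-referential difficulty is the cross term $(\bP^\pi-\wh\bP^\pi)(\wh\bv^\pi-\bv^*)$: it depends on $\norm{\wh\bv^\pi-\bv^*}_\infty$, the very quantity being bounded, and since $\wh\bv^\pi$ depends on the samples, Bernstein cannot be applied to it directly. I would resolve this by bootstrapping: a first, crude Hoeffding bound gives $\norm{\wh\bv^\pi-\bv^*}_\infty=\otilde((1-\gamma)^{-2}m^{-1/2})$, after which the cross term, once passed through the resolvent, is a second-order term dominated by the main variance term for $m\gtrsim(1-\gamma)^{-1}$. Feeding this back, the variance term dominates, and choosing $m=\Theta[(1-\gamma)^{-3}\epsilon^{-2}L]$ forces $\|\wh\bQ^*-\bQ^*\|_\infty\le\epsilon$. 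I expect the main obstacle to be exactly this coupling together with the total-variance inequality itself: it is the replacement of Hoeffding by the variance-weighted Bernstein bound, and the law-of-total-variance fact that $\sum_i\gamma^{2i}(\bP^\pi)^i\bsigma_{\bv^*}\le(1-\gamma)^{-2}\one$, that shaves the naive $(1-\gamma)^{-4}$ dependence down to the optimal $(1-\gamma)^{-3}$.
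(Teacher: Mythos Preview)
First, note that the paper does not prove this theorem at all; it is quoted from \cite{azar2013minimax} and invoked as a black box in Section~\ref{sec:alg_value}. So there is no in-paper proof to compare against. Your outline is in the spirit of the original Azar--Munos--Kappen argument, but one step is glossed over and it is precisely the hard one.

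The two directions of your sandwich are \emph{not} symmetric. For $\bQ^*-\wh\bQ^*\le\bQ^{\pi^*}-\wh\bQ^{\pi^*}$ you work with the true optimal policy $\pi^*$, which is sample-independent; Bernstein on $(\bP-\wh\bP)\bv^*$ and Lemma~\ref{lemma:variance bound} applied with $\pi=\pi^*$ (so that $\bsigma_{\bv^{\pi^*}}=\bsigma_{\bv^*}$) line up perfectly, and your bootstrap on the cross term is fine. For the other direction, $\wh\bQ^*-\bQ^*\le\wh\bQ^{\wh\pi^*}-\bQ^{\wh\pi^*}$, the policy $\wh\pi^*$ is data-dependent. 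Taking a union bound over all $(s,a)$ does rescue the concentration step, giving uniform control of $|(\bP_{s,a}-\wh\bP_{s,a})^\top\bv^*|$ by $\sqrt{\bsigma_{\bv^*}(s,a)/m}$. The issue is the accumulation step: Lemma~\ref{lemma:variance bound} bounds $(\bI-\gamma\bP^{\wh\pi^*})^{-1}\sqrt{\bsigma_{\bv^{\wh\pi^*}}}$, i.e.\ the variance of the policy's \emph{own} value function, whereas Bernstein hands you $\sqrt{\bsigma_{\bv^*}}$. Bridging the two via Lemma~\ref{lemma: variance triangle} introduces $\|\bv^*-\bv^{\wh\pi^*}\|_\infty$, and after passing through the resolvent this picks up a factor $(1-\gamma)^{-1}$; the resulting term is not lower order---it is comparable to the very quantity you are bounding (indeed $\|\bv^*-\bv^{\wh\pi^*}\|_\infty$ is controlled by $\|\bQ^*-\wh\bQ^*\|_\infty$ only up to another policy-evaluation error in the true model). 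A single Hoeffding bootstrap does not close this loop. What Azar et al.\ actually do is set up a genuine self-bounding inequality for $\|\bQ^*-\wh\bQ^*\|_\infty$ (schematically $\Delta\le c_1(1-\gamma)^{-3/2}m^{-1/2}+c_2(1-\gamma)^{-1}m^{-1/2}\Delta+\ldots$) and solve it; that step, for the $\wh\pi^*$ direction, is missing from your proposal.
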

Recall that ${\bv}^*$ and $\wh{\bv}^*$ are the optimal value functions of $\cM$ and $\wh{\cM}$.
From Theorem~\ref{thm:qvalue approx}, we immediately have
\[
\forall s\in \cS:~ |\bv^*(s)-\wh{\bv}^*(s)|= | \max_{a\in\cA} \bQ^*(s, a) - \max_{a\in\cA} \wh{\bQ}^*(s, a)|
\le \max_{a\in \cA}| \bQ^*(s, a) - \wh{\bQ}^*(s, a)| \le \epsilon,
\]
with probability at least $1-\delta$.

\subsection{High Precision Algorithm in the Sparsified MDP}

Next we shall use the high precision algorithm of the \cite{sidford2018variance} which has the following guarantee.

\begin{theorem}[\cite{sidford2018variance}]
	\label{thm:high precision}
	There is an algorithm which given an input DMDP $\cM = (\cS, \cA, \br, \bP, \gamma)$ in time\footnote{$\wt{O}(f)$ denotes $O(f\cdot \log^{O(1)} f)$.} \[
	\wt{O}\bigg[\bigg(\nnz(\bP) + \frac{|\cS||\cA|}{(1-\gamma)^3}\bigg)\cdot \log\epsilon^{-1}\cdot\log\delta^{-1}\bigg]
	\] 
 and outputs a vector $\wt{\bv}^*$ 
	such that with probability at least $1-\delta$, 
	\[
	\|{\wt{\bv}^* - \bv^*}\|_{\infty}\le \epsilon.
	\]
where $\bv^*$ is the optimal value of $\cM$. 
\end{theorem}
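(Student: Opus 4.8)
The plan is to prove this by running \emph{variance-reduced approximate value iteration} in a sequence of $K=O(\log(\epsilon^{-1}(1-\gamma)^{-1}))$ error-halving epochs, crucially exploiting that here the full transition matrix $\bP$ is available (so the expensive ``anchor'' product can be computed \emph{exactly} once per epoch, while the cheap per-iteration corrections are estimated by subsampling the known distributions $\bP_{s,a}$). Initialize $\bv^{(0)}={\bf 0}$, so $\inorm{\bv^{(0)}-\bv^*}\le (1-\gamma)^{-1}$. Entering an epoch with a vector $\bu$ satisfying $\inorm{\bu-\bv^*}\le u$, the epoch's goal is to return $\bu'$ with $\inorm{\bu'-\bv^*}\le u/2$; iterating $K$ epochs drives the error below $\epsilon$. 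The device is to split every Bellman update as
\[
\cT(\bx)_s = \max_a\big[\br_{s,a} + \gamma\,\bP_{s,a}^\top(\bx-\bu) + \gamma\,\bP_{s,a}^\top\bu\big],
\]
so the anchor term $\bP^\top\bu$ is shared across the whole epoch. Because $\bP$ is given explicitly, I would compute $\bP^\top\bu$ exactly once per epoch in $O(\nnz(\bP))$ time, incurring \emph{no} estimation error in the anchor.

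Within the epoch I would run $R=\Theta((1-\gamma)^{-1}\log(\cdot))$ inner iterations $\bv^{(i+1)}_s \gets \max_a[\br_{s,a}+\gamma\,\bg^{(i)}(s,a)+\gamma\,\bP_{s,a}^\top\bu]$, where $\bg^{(i)}(s,a)$ is a \emph{fresh} empirical estimate of the correction $\bP_{s,a}^\top(\bv^{(i)}-\bu)$ formed from $m=\otilde((1-\gamma)^{-2})$ samples drawn from $\bP_{s,a}$. The decisive point is that, maintaining the invariant $\inorm{\bv^{(i)}-\bv^*}\le 2u$ throughout the epoch, the correction $\bv^{(i)}-\bu$ has range only $O(u)$, so by Hoeffding's inequality $m=\otilde((1-\gamma)^{-2})$ samples suffice to estimate it to additive error $(1-\gamma)u/8$, \emph{independently of $u$ and $\epsilon$}. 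This is exactly the variance-reduction gain: estimating the small correction rather than $\bP^\top\bv^{(i)}$ itself (whose range is $(1-\gamma)^{-1}$) removes the factor $u^{-2}(1-\gamma)^{-2}$ one would otherwise pay per iteration.

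To prove the epoch halves the error, I would treat the inner loop as approximate value iteration with a uniform per-step operator error $\eta\le(1-\gamma)u/8$. Writing $e_i=\inorm{\bv^{(i)}-\bv^*}$, the $\gamma$-contraction of $\cT$ gives $e_{i+1}\le \gamma e_i+\eta$, hence $e_R\le \gamma^R u + \eta/(1-\gamma)$; choosing $R=\Theta((1-\gamma)^{-1})$ makes $\gamma^R u\le u/4$ while $\eta/(1-\gamma)\le u/8$, so the output obeys $\inorm{\bv^{(R)}-\bv^*}\le u/2$. The same recursion certifies $e_i\le u+u/8\le 2u$, closing the circular dependence between the range bound and the concentration guarantee. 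Note that, unlike the policy result of the main text, neither monotonicity nor the total-variance argument is needed here: the claim is only about \emph{values}, and a worst-case sum of per-step errors already yields the $(1-\gamma)^{-3}$ scaling, since $R\cdot m=\otilde((1-\gamma)^{-1})\cdot\otilde((1-\gamma)^{-2})$.

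For the cost, each epoch pays $O(\nnz(\bP))$ for the exact anchor plus $R\cdot m\cdot|\cS||\cA|=\otilde(|\cS||\cA|(1-\gamma)^{-3})$ for the sampled inner iterations (each sample processed in $O(1)$ after an $O(\nnz(\bP))$ alias-table preprocessing of $\bP$), and there are $O(\log\epsilon^{-1})$ epochs, giving the stated $\otilde[(\nnz(\bP)+|\cS||\cA|(1-\gamma)^{-3})\log\epsilon^{-1}]$ time. The $\log\delta^{-1}$ factor enters through the sample counts: to make every Hoeffding event hold simultaneously I would union bound over all $|\cS||\cA|$ pairs, all $R$ inner iterations, and all $K$ epochs, setting each per-estimate failure probability to $\delta/(KR|\cS||\cA|)$, which inflates $m$ by $\log\delta^{-1}$. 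I expect the main obstacle to be precisely this high-probability bookkeeping: the inner iterates $\bv^{(i)}$ are random and adaptively defined, so I must use independent fresh samples in each inner iteration and then argue that, on the single high-probability event where all concentration bounds hold, the invariant $\inorm{\bv^{(i)}-\bv^*}\le 2u$ is never violated inside the epoch, so that the range bound justifying $m=\otilde((1-\gamma)^{-2})$ remains valid throughout.
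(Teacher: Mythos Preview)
The paper does not prove this theorem---it is quoted as a black-box result from \cite{sidford2018variance} and used only to solve the sparsified MDP in Appendix~\ref{sec:alg_value}, so there is no in-paper proof to compare against. Your sketch is nonetheless a correct reconstruction of that result and matches the informal account of the \emph{Variance Reduction Technique} in Section~\ref{sec:overview}: exact computation of the anchor $\bP^\top\bu$ in $O(\nnz(\bP))$ per epoch, Hoeffding on the $O(u)$-range correction with $m=\otilde((1-\gamma)^{-2})$ fresh samples per inner step, $R=\Theta((1-\gamma)^{-1})$ contraction steps per halving epoch, and a union bound over all estimates for the $\log\delta^{-1}$ factor; your remark that neither monotonicity nor the total-variance device is needed for the \emph{value} (as opposed to \emph{policy}) guarantee is also correct.
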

Combining the above two theorems, we immediately obtain an algorithm for finding $\epsilon$-optimal value functions. It works by first generating enough samples for each state-action pair and then call the high-precision MDP solver by \cite{sidford2018variance}. It does not sample transitions adaptively. We show that it achieves an optimal running time guarantee (up to $\poly\log$ factors) of obtaining the value function under the sampling oracle model. 

\begin{theorem}
	\label{thm:value}
	Given an input DMDP $\cM=(\cS, \cA, \br, \bP, \gamma)$ with a sampling oracle and optimal value function $\bv^*$, there exists an algorithm, that runs in time
	\[
		\wt{O}\bigg(\frac{|\cS||\cA|}{(1-\gamma)^3}\cdot\frac{1}{\epsilon^2}\cdot \log^2 \left(\frac{1}{\delta}\right)\bigg)
	\]
	and outputs a vector $\wh{\bv}^*$ 
	such that $\|{\wh{\bv}^* - \bv^*}\|_{\infty}\le O(\epsilon)$ with probability at least $1-O(\delta)$.
\end{theorem}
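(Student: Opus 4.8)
\textbf{Proof proposal for Theorem~\ref{thm:value}.}
The plan is to combine Theorem~\ref{thm:qvalue approx} and Theorem~\ref{thm:high precision} in the obvious way, being careful about how the error parameters are split and how the running time of the high-precision solver specializes on a sparse input. First I would set up the sparsification: given the target accuracy $\epsilon$ and failure probability $\delta$, invoke the construction of Section~\ref{sec:sparsify mdp} with internal parameters $\epsilon' = \Theta(\epsilon)$ and $\delta' = \Theta(\delta)$, drawing $m = \Theta\bigl[(1-\gamma)^{-3}\epsilon'^{-2}\log(|\cS||\cA|/\delta')\bigr]$ samples per state--action pair and forming the empirical transition matrix $\wh{\bP}$. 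Generating these samples and writing down $\wh{\bP}$ takes time $O(\nnz(\wh{\bP})) = O\bigl[(1-\gamma)^{-3}\epsilon^{-2}|\cS||\cA|\log(|\cS||\cA|/\delta)\bigr]$, since each sample costs $O(1)$ by the generative-model assumption. By Theorem~\ref{thm:qvalue approx}, with probability at least $1-\delta'$ the optimal $Q$-function $\wh{\bQ}^*$ of the sparsified DMDP $\wh{\cM}$ satisfies $\|\wh{\bQ}^* - \bQ^*\|_\infty \le \epsilon'$, and as noted immediately after that theorem this transfers to the value functions: $\|\wh{\bv}^* - \bv^*\|_\infty \le \epsilon'$ via the fact that $|\max_a x_a - \max_a y_a| \le \max_a |x_a - y_a|$.

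Next I would run the high-precision solver of Theorem~\ref{thm:high precision} on the explicit DMDP $\wh{\cM}$ with target accuracy $\epsilon'$ and failure probability $\delta'$, producing a vector $\wh{\bv}^{\,*}_{\mathrm{alg}}$ with $\|\wh{\bv}^{\,*}_{\mathrm{alg}} - \wh{\bv}^*\|_\infty \le \epsilon'$ with probability at least $1-\delta'$. The running time from Theorem~\ref{thm:high precision} is
\[
\wt{O}\bigg[\bigg(\nnz(\wh{\bP}) + \frac{|\cS||\cA|}{(1-\gamma)^3}\bigg)\cdot \log(\epsilon'^{-1})\cdot\log(\delta'^{-1})\bigg].
\]
Here I would substitute $\nnz(\wh{\bP}) = O\bigl[(1-\gamma)^{-3}\epsilon^{-2}|\cS||\cA|\log(|\cS||\cA|/\delta)\bigr]$; since $\epsilon < 1$ the first term dominates $|\cS||\cA|/(1-\gamma)^3$, so the bracket is $O\bigl[(1-\gamma)^{-3}\epsilon^{-2}|\cS||\cA|\log(|\cS||\cA|/\delta)\bigr]$, and folding the two $\log$ factors and the $\log(|\cS||\cA|/\delta)$ into the $\wt{O}$ and into the stated $\log^2(1/\delta)$ gives the claimed time bound $\wt{O}\bigl((1-\gamma)^{-3}\epsilon^{-2}|\cS||\cA|\log^2(1/\delta)\bigr)$. (Strictly, the logarithmic dependence also involves $|\cS||\cA|$ and $1/\epsilon$; these are absorbed by the $\wt{O}$ notation, which hides $\poly\log$ of all parameters.) Finally, a triangle inequality $\|\wh{\bv}^{\,*}_{\mathrm{alg}} - \bv^*\|_\infty \le \|\wh{\bv}^{\,*}_{\mathrm{alg}} - \wh{\bv}^*\|_\infty + \|\wh{\bv}^* - \bv^*\|_\infty \le 2\epsilon' = O(\epsilon)$ and a union bound over the two failure events (total probability $\le 2\delta' = O(\delta)$) complete the argument.

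I do not expect a genuine obstacle here — this theorem is explicitly described in the text as obtained ``by directly combining \cite{azar2013minimax} and \cite{sidford2018variance}'' and is included ``for completeness.'' The only points that need a little care are: (i) confirming that the computational model of Theorem~\ref{thm:high precision} is compatible with feeding it the sparse matrix $\wh{\bP}$ (the paper flags a ``slightly different computation model,'' so I would state explicitly that $\wh{\bP}$ is given in sparse adjacency-list form so that $\nnz(\wh{\bP})$ is the relevant size parameter); (ii) bookkeeping the constants so that the two sources of $\epsilon'$-error and the two sources of $\delta'$-failure combine into the advertised $O(\epsilon)$ and $1-O(\delta)$; and (iii) checking that the $O(\nnz(\wh{\bP}))$ preprocessing time to draw the samples does not exceed the solver's runtime, which it does not since the solver already pays $\wt{O}(\nnz(\wh{\bP}))$. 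None of these is hard; the ``main obstacle'' is really just presentational care in the $\wt{O}$ accounting.
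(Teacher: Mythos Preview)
Your proposal is correct and matches the paper's own proof essentially step for step: sparsify via Section~\ref{sec:sparsify mdp}, invoke Theorem~\ref{thm:qvalue approx} to bound $\|\wh{\bv}^*-\bv^*\|_\infty$, run the high-precision solver of Theorem~\ref{thm:high precision} on $\wh{\cM}$, substitute $\nnz(\wh{\bP})$ into its runtime, and finish with a triangle inequality plus union bound. The paper's version is slightly terser (it does not discuss the computational-model point (i) you raise), but the argument is the same.
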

\begin{proof}
We first obtain a sparsified MDP $\wh{\cM}=(\cS, \cA, \br, \wh{\bP}, \gamma)$ using the procedure described in Section~\ref{sec:sparsify mdp}.
This procedure runs in time $O(|\cS||\cA|m)$, recalling that $m$ is the number of samples per $(s,a)$, defined in \eqref{eqn:num samples per sa}.
Let $\wh{\bu}^*$ be the optimal value function of $\wh{\cM}$.
By Theorem~\ref{thm:qvalue approx}, with probability at least $1-\delta$, $\|\wh{\bu}^* -\bv^*\|\le \epsilon$, which we condition on for the rest of the proof.
Calling the algorithm in Theorem~\ref{thm:high precision}, we obtain a vector $\wt{\bu}^*$ in time 
\[
\wt{O}\bigg[\bigg(\nnz(\wh{\bP}) + \frac{|\cS||\cA|}{(1-\gamma)^3}\bigg)\cdot \log\epsilon^{-1}\cdot\log\delta^{-1}\bigg] 
=  \wt{O}\bigg(\frac{|\cS||\cA|}{(1-\gamma)^3}\cdot\frac{1}{\epsilon^2}\cdot \log^2\frac{1}{\delta}\bigg)
\] 
and that with probability at least $1-\delta$,
$\|\wt{\bu}^* - \wh{\bu}^*\|\le \epsilon$, which we condition on.
By triangle inequality, we have
\[
\|\wt{\bu}^* - \bv^*\|_{\infty}\le \|\wt{\bu}^* - \wh{\bu}^*\|_{\infty} + \|\wh{\bu}^* - {\bv}^*\|_{\infty}\le 2\epsilon.
\]
This concludes the proof.
\end{proof}
\section{Variance Bounds}
\label{sec:var_bounds}


In this section, we study some properties of a DMDP. 
Most of the content in this section is similar to \cite{azar2013minimax}.
We provide slight modifications and improvement to make the results fit to our application.
The main result of this section is to show the following lemma.
\begin{lemma}[Upper Bound on Variance]
	\label{lemma:variance bound}
	For any $\pi$, we have
	\[
	\big\|(\bI - \gamma\bP^{\pi})^{-1}\sqrt{{\bsigma}_{\bv^{\pi}}}\big\|_{\infty}^2 \le \frac{1 + \gamma}{\gamma^2(1 - \gamma)^3},
	\]
	where $\sigma_{v^{\pi}}= \bP^{\pi} (\bv^\pi)^2 - (\bP^{\pi}\bv^{\pi})^2$ is the ``one-step'' variance of playing policy $\pi$.
\end{lemma}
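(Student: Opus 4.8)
The plan is to combine the ``law of total variance'' bound on accumulated one-step variances with a self-bounding quadratic argument. Write $P := \bP^{\pi}$ (a row-stochastic operator on $\RR^{\cS\times\cA}$), $\bsigma := \bsigma_{\bv^{\pi}}\ge 0$, and set
\[
\bu := (\bI - \gamma P)^{-1}\sqrt{\bsigma} = \sum_{t\ge 0}\gamma^{t}P^{t}\sqrt{\bsigma}\ \ge 0,\qquad M := \norm{\bu}_{\infty}.
\]
First I would record the total-variance estimate. Let $\bSigma^{\pi}$ be the vector whose $s$-th entry is the variance of the discounted return $\sum_{t\ge0}\gamma^{t}r_{t}$ of $\pi$ started at $s$. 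The standard conditional-variance recursion (deterministic rewards plus the law of total variance, as used in \cite{munos1999variable,azar2013minimax} and sketched in the Total-Variance paragraph of Section~\ref{sec:overview}) gives $\bSigma^{\pi} = \gamma^{2}P\bSigma^{\pi} + \gamma^{2}\bsigma$, i.e. $\bSigma^{\pi} = \gamma^{2}(\bI-\gamma^{2}P)^{-1}\bsigma$. Since this return lies in $[0,(1-\gamma)^{-1}]$ we have $\bSigma^{\pi}\le (1-\gamma)^{-2}\one$, hence
\[
\bw := (\bI-\gamma^{2}P)^{-1}\bsigma = \gamma^{-2}\bSigma^{\pi}\ \le\ \frac{1}{\gamma^{2}(1-\gamma)^{2}}\one .
\]

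Next, since $(\bI-\gamma P)\bu = \sqrt{\bsigma}$ we have $\bu = \sqrt{\bsigma} + \gamma P\bu$. Squaring coordinatewise, using Jensen's inequality $(P\bu)^{2}\le P(\bu^{2})$ (each row of $P$ is a probability distribution and $x\mapsto x^{2}$ is convex) together with $P\bu\le M\one$, I obtain
\[
\bu^{2}\ \le\ \bsigma + 2\gamma M\sqrt{\bsigma} + \gamma^{2}P(\bu^{2}),\qquad\text{so}\qquad (\bI-\gamma^{2}P)\bu^{2}\ \le\ \bsigma + 2\gamma M\sqrt{\bsigma}.
\]
Applying the entrywise-nonnegative operator $(\bI-\gamma^{2}P)^{-1}=\sum_{t\ge0}\gamma^{2t}P^{t}$ yields $\bu^{2}\le \bw + 2\gamma M\,(\bI-\gamma^{2}P)^{-1}\sqrt{\bsigma}$. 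To control the last term I would again combine Jensen with Cauchy--Schwarz: for each coordinate $i$,
\[
\big[(\bI-\gamma^{2}P)^{-1}\sqrt{\bsigma}\big]_{i} = \sum_{t\ge0}\gamma^{2t}[P^{t}\sqrt{\bsigma}]_{i} \le \sum_{t\ge0}\gamma^{2t}\sqrt{[P^{t}\bsigma]_{i}} \le \Big(\sum_{t\ge0}\gamma^{2t}\Big)^{1/2}\Big(\sum_{t\ge0}\gamma^{2t}[P^{t}\bsigma]_{i}\Big)^{1/2} = \frac{\sqrt{\bw_{i}}}{\sqrt{1-\gamma^{2}}},
\]
so $(\bI-\gamma^{2}P)^{-1}\sqrt{\bsigma}\le (1-\gamma^{2})^{-1/2}\sqrt{\bw}$ coordinatewise, giving $\bu^{2}\le \bw + \tfrac{2\gamma M}{\sqrt{1-\gamma^{2}}}\sqrt{\bw}$.

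Finally I take $\ell_{\infty}$-norms and set $W := \norm{\bw}_{\infty}\le \gamma^{-2}(1-\gamma)^{-2}$. Since $\norm{\bu^{2}}_{\infty}=M^{2}$ and $\norm{\sqrt{\bw}}_{\infty}=\sqrt{W}$, this is the scalar inequality $M^{2}\le W + \tfrac{2\gamma M}{\sqrt{1-\gamma^{2}}}\sqrt{W}$; solving the quadratic in $M$ gives $M\le \tfrac{(1+\gamma)\sqrt{W}}{\sqrt{1-\gamma^{2}}}$, hence $M^{2}\le \tfrac{(1+\gamma)W}{1-\gamma}\le \tfrac{1+\gamma}{\gamma^{2}(1-\gamma)^{3}}$, which is the claim. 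The main obstacle is getting the correct power of $(1-\gamma)$: the naive route — Jensen plus Cauchy--Schwarz directly on $(\bI-\gamma P)^{-1}\sqrt{\bsigma}$ followed by the crude bound $\norm{\bsigma}_{\infty}\le (1-\gamma)^{-2}$ — only delivers $\norm{(\bI-\gamma P)^{-1}\sqrt{\bsigma}}_{\infty}^{2}=O((1-\gamma)^{-4})$. Saving the last $(1-\gamma)^{-1}$ forces routing the argument through $(\bI-\gamma^{2}P)^{-1}$ instead of $(\bI-\gamma P)^{-1}$, since that is the only operator against which the bounded-total-variance fact $\bSigma^{\pi}\le(1-\gamma)^{-2}\one$ applies; the squaring trick is precisely what makes that substitution legitimate while keeping the cross term as $2\gamma M\sqrt{\bsigma}$. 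A secondary point needing care is the clean derivation of the identity $\bSigma^{\pi}=\gamma^{2}(\bI-\gamma^{2}P)^{-1}\bsigma$, and tracking the $\gamma^{-2}$ factor there, which is what produces the $\gamma^{2}$ in the denominator of the stated bound.
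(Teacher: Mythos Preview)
Your argument is correct and reaches exactly the paper's bound, but the route differs from the paper's. The paper proves a standalone inequality (their Lemma~\ref{lemma:sqrv}): apply Jensen and Cauchy--Schwarz directly to the series $\sum_{t\ge 0}\gamma^{t}P^{t}\sqrt{\bv}$ to obtain $\|(\bI-\gamma P)^{-1}\sqrt{\bv}\|_{\infty}^{2}\le (1-\gamma)^{-1}\|(\bI-\gamma P)^{-1}\bv\|_{\infty}$, and then pass from $(\bI-\gamma P)^{-1}$ to $(\bI-\gamma^{2}P)^{-1}$ via the algebraic factorization $(\bI-\gamma P)^{-1}=(\bI+\gamma P)(\bI-\gamma^{2}P)^{-1}$, picking up only a factor $(1+\gamma)$. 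Plugging in $\bv=\bsigma_{\bv^{\pi}}$ and the total-variance bound $\|(\bI-\gamma^{2}P)^{-1}\bsigma_{\bv^{\pi}}\|_{\infty}\le\gamma^{-2}(1-\gamma)^{-2}$ finishes immediately, with no squaring of $\bu$ and no quadratic in $M$ to solve. Your squaring trick and self-bounding quadratic are a legitimate alternative way to make $(\bI-\gamma^{2}P)^{-1}$ appear, and your commentary correctly identifies that routing through that operator is the crux; but the paper shows that the ``naive'' direct Cauchy--Schwarz on $(\bI-\gamma P)^{-1}\sqrt{\bsigma}$ already suffices provided one follows it with the factorization identity rather than the crude bound $\|\bsigma\|_{\infty}\le(1-\gamma)^{-2}$. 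The paper's path is shorter; yours is more self-contained in that $\gamma^{2}$ arises organically from squaring rather than from an identity one has to spot.
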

Before we prove this lemma, we introduce another notation.
We define $\bSigma^{\pi}\in \RR^{|\cS||\cA|}$ for all $ (s, a) \in \cS\times\cA$ by
\[
\bSigma^{\pi}(s, a) := \EE\bigg[{\bigg(\br(s, a) + \sum_{t\ge 1}\gamma^{t}\br(s^t, a^t) - \bQ^{\pi}(s, a)\bigg)^2\bigg| s^0 =s, a^0 = a, a^t=\pi(s^t)}\bigg]
\]
where $a^t = \pi(s^t)$.
Thus $\bSigma^{\pi}$ is the variance of the reward of starting with $(s, a)$ and play $\pi$ for infinite steps. 
The crucial observation of obtaining the near-optimal sample complexity is the following ``Bellman Equation'' for variance. 
It is a consequence of ``the law of total variance''.
\begin{lemma}[Bellman Equation for variance]
	\label{lem:var_bell}
	$\bSigma^{\pi}$ satisfies the Bellman equation
	\[
	\bSigma^{\pi} = \gamma^2{\bsigma}_{\bv^\pi} + \gamma^2\cdot \bP^{\pi} \bSigma^\pi.
	\]
\end{lemma}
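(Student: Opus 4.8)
The plan is to unfold the definition of $\bSigma^{\pi}(s,a)$ by isolating the contribution of the first transition step, and then apply the law of total variance conditioned on that first step. Fix $(s,a)$ and write $X = \br(s,a) + \sum_{t\ge 1}\gamma^t \br(s^t,a^t)$ for the (random) total discounted return of the trajectory starting at $(s^0,a^0)=(s,a)$ and following $\pi$ thereafter. Since $\br(s,a)$ is deterministic given $(s,a)$, and $\bQ^{\pi}(s,a) = \br(s,a) + \gamma \bP_{s,a}^\top \bv^{\pi}$, we have $X - \bQ^{\pi}(s,a) = \gamma\big(\sum_{t\ge 1}\gamma^{t-1}\br(s^t,a^t) - \bP_{s,a}^\top\bv^{\pi}\big)$, so $\bSigma^{\pi}(s,a) = \gamma^2 \var\big(Y\big)$ where $Y = \sum_{t\ge 1}\gamma^{t-1}\br(s^t,a^t)$ is the discounted return of the tail trajectory, which (after reindexing $s^1 \to$ new start state) is distributed as the return of a fresh trajectory under $\pi$ starting from $s^1 \sim \bP_{s,a}$.

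First I would condition on $s^1$. By the law of total variance,
\[
\var(Y) = \EE_{s^1\sim \bP_{s,a}}\big[\var(Y \mid s^1)\big] + \var_{s^1\sim \bP_{s,a}}\big[\EE(Y\mid s^1)\big].
\]
For the first term: given $s^1 = s'$, the tail return $Y$ equals $\br(s',\pi(s')) + \sum_{t\ge 2}\gamma^{t-1}\br(s^t,a^t)$, whose variance is exactly $\gamma^{-2}\bSigma^{\pi}(s',\pi(s'))$ by the definition of $\bSigma^{\pi}$ applied at the state-action pair $(s',\pi(s'))$ (the leading deterministic reward $\br(s',\pi(s'))$ drops out of the variance, and the remaining sum is the $\gamma$-discounted tail starting one step later). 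Hence $\EE_{s'\sim\bP_{s,a}}[\var(Y\mid s^1=s')] = \gamma^{-2}\,\bP_{s,a}^\top \bSigma^{\pi}_{\cdot}$ where I abuse notation to mean $\sum_{s'}\bP_{s,a}(s')\bSigma^{\pi}(s',\pi(s')) = (\bP^{\pi}\bSigma^{\pi})(s,a)$ in the paper's notation. For the second term: $\EE(Y \mid s^1 = s') = \bv^{\pi}(s')$ since the expected discounted return from $s'$ under $\pi$ is $\bv^{\pi}(s')$, so $\var_{s'\sim\bP_{s,a}}[\bv^{\pi}(s')] = \bP_{s,a}^\top(\bv^{\pi})^2 - (\bP_{s,a}^\top\bv^{\pi})^2 = {\bsigma}_{\bv^{\pi}}(s,a)$ by definition. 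Multiplying through by $\gamma^2$ gives $\bSigma^{\pi}(s,a) = \gamma^2{\bsigma}_{\bv^{\pi}}(s,a) + \gamma^2(\bP^{\pi}\bSigma^{\pi})(s,a)$, which is the claimed identity in vector form.

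The main obstacle is purely bookkeeping: making the reindexing of the trajectory rigorous so that the conditional tail return starting from $s^1$ genuinely has the same law (as a function of the starting state) as the original return, and checking that the variance of a sum with a leading deterministic term equals the variance of the shifted-and-rescaled tail. This uses the Markov property and time-homogeneity of the chain induced by $\pi$, together with the fact that all series converge absolutely since rewards lie in $[0,1]$ and $\gamma<1$ (so all the expectations and variances are finite and the manipulations are valid). Once the indexing is set up correctly, the law of total variance does all the real work and the rest is matching notation to the definitions of ${\bsigma}_{\bv^{\pi}}$ and $\bP^{\pi}$.
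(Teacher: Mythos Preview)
Your approach via the law of total variance is the same in spirit as the paper's (the paper expands $\EE[X^2]$ by conditioning on the first transition, which is algebraically equivalent), but your execution contains a bookkeeping error that, if followed literally, gives the wrong identity.

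The error is the claim that $\var(Y\mid s^1=s')=\gamma^{-2}\,\bSigma^{\pi}(s',\pi(s'))$. In fact $\var(Y\mid s^1=s')=\bSigma^{\pi}(s',\pi(s'))$ with no extra factor. To see this, note that given $s^1=s'$ (hence $a^1=\pi(s')$) we have
\[
Y \;=\; \br(s',\pi(s')) + \sum_{t\ge 2}\gamma^{t-1}\br(s^t,a^t)
\;=\; \br(s',\pi(s')) + \sum_{\tau\ge 1}\gamma^{\tau}\br(s^{\tau+1},a^{\tau+1}),
\]
and by the Markov property the conditional law of $(s^{\tau+1},a^{\tau+1})_{\tau\ge 1}$ given $s^1=s'$ equals the law of the trajectory started at $(s',\pi(s'))$. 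Thus $Y\mid s^1=s'$ has exactly the distribution of $\br(s',\pi(s'))+\sum_{\tau\ge 1}\gamma^\tau\br(\tilde s^\tau,\tilde a^\tau)$ with $\tilde s^0=s'$, whose variance is $\bSigma^{\pi}(s',\pi(s'))$ by definition. There is no additional $\gamma^{-1}$ rescaling: the ``one step later'' shift is already absorbed in the reindexing. If you carry your stated $\gamma^{-2}$ through, you would obtain $\bSigma^{\pi}=\gamma^2\bsigma_{\bv^\pi}+\bP^{\pi}\bSigma^{\pi}$, which is not the claimed identity. Dropping the spurious $\gamma^{-2}$ and then multiplying $\var(Y)=(\bP^{\pi}\bSigma^{\pi})(s,a)+\bsigma_{\bv^{\pi}}(s,a)$ by $\gamma^2$ gives the correct result.
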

\begin{proof}
	By direct expansion,
	\begin{align}
	\bSigma^{\pi}(s,a) &= \EE\bigg[{\big({\br(s, a) + \sum_{t\ge 1}\gamma^{t}\br(s^t, a^t)}\big)^2 \bigg| s^0 =s, a^0 = a, a^t=\pi(s^t)}\bigg]
	- (\bQ^{\pi}(s,a))^2.
	\end{align}
	The first term in RHS can be written as
	\begin{align*}
	\EE&\bigg[{\bigg({\br(s, a) + \sum_{t\ge 1}\gamma^{t}\br(s^t, a^t)}\bigg)^2\bigg| s^0 =s, a^0 = a, a^t=\pi(s^t)} \bigg]\\
	&=\sum_{s'\in \cS}\bP_{s,a}(s')\EE
	\bigg[{\bigg({\br(s, a) + \gamma\br(s',\pi(s')) + \gamma\sum_{t\ge 1}\gamma^{t}\br(s^t, a^t)}\bigg)^2\bigg| s^0 =s', a^0 = \pi(s'), a^t=\pi(s^t)}\bigg]\\
	&=\br(s,a)^2 + 2\gamma \br(s,a)\cdot \sum_{s'\in \cS}\bP_{s,a}(s')\bQ^{\pi}(s', \pi(s')) \\
	& \qquad
	+ \gamma^2\sum_{s'\in \cS}\bP_{s,a}(s')\EE
	\bigg[{\bigg({\br(s',\pi(s'))  + \sum_{t\ge 1}\gamma^{t}\br(s^t, a^t)}\bigg)^2\bigg| s^0 =s', a^0 = \pi(s'), a^t=\pi(s^t)}\bigg]\\
	&=\br(s,a)^2 + 2\gamma \br(s,a)\cdot \sum_{s'\in \cS}\bP_{s,a}(s')\bQ^{\pi}(s', \pi(s')) 
	+ \gamma^2(\bP^{\pi}\bSigma^{\pi})(s,a) + \gamma^2 \sum_{s'\in \cS}\bP_{s,a}(s')(\bQ^{\pi}(s', \pi(s')) )^2\\
	&=\bQ^{\pi}(s, a)^2 + \gamma^2(\bP^{\pi}\bSigma^{\pi})(s,a)
	+ \gamma^2 \sum_{s'\in \cS}\bP_{s,a}(s')(\bQ^{\pi}(s', \pi(s')) )^2 - \gamma^2 \bigg({\sum_{s'\in \cS}\bP_{s,a}(s')\bQ^{\pi}(s', \pi(s'))}\bigg)^2\\
	& = \bQ^{\pi}(s, a)^2 + \gamma^2(\bP^{\pi}\bSigma^{\pi})(s,a) + \gamma^2{\bsigma}_{\bv^{\pi}}(s,a).
	\end{align*}
	Combining the above two equations, we conclude the proof.
\end{proof}
 
 As a remark, we note that
 \[
 \bSigma^{\pi} = \gamma^2(\bI - \gamma^2\bP^{\pi})^{-1}{\bsigma}_{\bv^{\pi}}.
 \]
 Furthermore, by definition, we have
 \[
  \max_{(s, a)\in \cS}\bSigma^{\pi}(s,a) \le (1-\gamma)^{-2},
 \]
 The next lemma is crucial in proving the error bounds. 

\begin{lemma}
	\label{lemma:sqrv}
	Let $\bP \in \R^{n \times n}$ be a non-negative matrix in which every row has $\ell_1$ norm at most $1$, i.e. $\ell_\infty$ operator norm at most $1$. Then for all  $\gamma \in(0, 1)$ and $\bv \in \R^{n}_{\geq 0}$ we have  
	\[
	\| (\bI - \gamma \bP)^{-1} \sqrt{\bv} \|_\infty 
	\leq  \sqrt{\frac{1}{1 - \gamma} \left\| (\bI - \gamma \bP )^{-1} \bv \right\|_\infty}
	\leq  \sqrt{\frac{1 + \gamma}{1 - \gamma} \left\| (\bI - \gamma^2 \bP )^{-1} \bv \right\|_\infty} ~.
	\] 
\end{lemma}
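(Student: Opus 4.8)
The plan is to establish the two inequalities separately, in both cases working from the Neumann series $(\bI-\gamma\bP)^{-1}=\sum_{k\ge 0}\gamma^k\bP^k$, which converges and is entrywise nonnegative because $\bP\ge 0$ has $\ell_\infty$ operator norm at most $1$ and $\gamma<1$; note also that $\bP\one\le\one$, hence $\bP^k\one\le\one$ for every $k$ by induction, and that $\|\bP^k\|_{\infty\to\infty}\le 1$.

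For the first inequality I would fix a coordinate $i$ and write $\big((\bI-\gamma\bP)^{-1}\sqrt{\bv}\big)_i=\sum_{k\ge 0}\gamma^k\sum_j(\bP^k)_{ij}\sqrt{\bv_j}$. Applying Cauchy--Schwarz to the $i$-th row of $\bP^k$ — whose entries are nonnegative and sum to $(\bP^k\one)_i\le 1$ — gives $\sum_j(\bP^k)_{ij}\sqrt{\bv_j}\le\big(\sum_j(\bP^k)_{ij}\big)^{1/2}\big(\sum_j(\bP^k)_{ij}\bv_j\big)^{1/2}\le(\bP^k\bv)_i^{1/2}$. Substituting this and applying Cauchy--Schwarz a second time over the index $k$, with the splitting $\gamma^k=\gamma^{k/2}\cdot\gamma^{k/2}$, yields $\big((\bI-\gamma\bP)^{-1}\sqrt{\bv}\big)_i\le\big(\sum_{k}\gamma^k\big)^{1/2}\big(\sum_k\gamma^k(\bP^k\bv)_i\big)^{1/2}=\big(\tfrac{1}{1-\gamma}\,((\bI-\gamma\bP)^{-1}\bv)_i\big)^{1/2}$. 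Since $(\bI-\gamma\bP)^{-1}\bv\ge 0$, taking the maximum over $i$ turns this into $\|(\bI-\gamma\bP)^{-1}\sqrt{\bv}\|_\infty\le\big(\tfrac{1}{1-\gamma}\|(\bI-\gamma\bP)^{-1}\bv\|_\infty\big)^{1/2}$, the first claimed bound.

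For the second inequality it suffices to show $\|(\bI-\gamma\bP)^{-1}\bv\|_\infty\le(1+\gamma)\|(\bI-\gamma^2\bP)^{-1}\bv\|_\infty$. I would set $\boldsymbol{z}:=(\bI-\gamma^2\bP)^{-1}\bv$, which is nonnegative with $\|\boldsymbol{z}\|_\infty$ equal to the right-hand-side norm, and note $\bv=\boldsymbol{z}-\gamma^2\bP\boldsymbol{z}$. All operators here are power series in $\bP$ and hence commute, so using the identity $\gamma\bP(\bI-\gamma\bP)^{-1}=(\bI-\gamma\bP)^{-1}-\bI$ one computes $(\bI-\gamma\bP)^{-1}\bv=(\bI-\gamma\bP)^{-1}\boldsymbol{z}-\gamma\big[(\bI-\gamma\bP)^{-1}-\bI\big]\boldsymbol{z}=(1-\gamma)(\bI-\gamma\bP)^{-1}\boldsymbol{z}+\gamma\boldsymbol{z}$. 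The triangle inequality together with $\|(\bI-\gamma\bP)^{-1}\boldsymbol{z}\|_\infty\le\sum_{k\ge0}\gamma^k\|\bP^k\|_{\infty\to\infty}\|\boldsymbol{z}\|_\infty\le\tfrac{1}{1-\gamma}\|\boldsymbol{z}\|_\infty$ then gives $\|(\bI-\gamma\bP)^{-1}\bv\|_\infty\le(1-\gamma)\cdot\tfrac{1}{1-\gamma}\|\boldsymbol{z}\|_\infty+\gamma\|\boldsymbol{z}\|_\infty=(1+\gamma)\|\boldsymbol{z}\|_\infty$; multiplying through by $\tfrac{1}{1-\gamma}$ closes the chain.

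The only step requiring genuine care — which I would flag as the main obstacle — is the algebra in the second inequality: a direct coefficient-by-coefficient comparison of $\sum_k\gamma^k\bP^k\bv$ against $(1+\gamma)\sum_k\gamma^{2k}\bP^k\bv$ fails, since the coefficient differences $(1+\gamma)\gamma^{2k}-\gamma^k$ change sign in $k$, so one must pass through $\boldsymbol{z}$ and exploit commutativity to collapse the expression before taking norms. Everything else is routine manipulation of nonnegative Neumann series and two applications of Cauchy--Schwarz.
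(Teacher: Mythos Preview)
Your proof is correct. For the first inequality you follow exactly the paper's line: Cauchy--Schwarz on each row of $\bP^k$ to get $\bP^k\sqrt{\bv}\le\sqrt{\bP^k\bv}$ entrywise, then Cauchy--Schwarz again over the index $k$ in the Neumann series.

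For the second inequality your route genuinely differs. The paper argues via a factorization, writing $(\bI-\gamma\bP)(\bI+\gamma\bP)=(\bI-\gamma\bP^2)$ and deducing $(\bI-\gamma\bP)^{-1}=(\bI+\gamma\bP)(\bI-\gamma^2\bP)^{-1}$, then using $\|(\bI+\gamma\bP)x\|_\infty\le(1+\gamma)\|x\|_\infty$. As written this identity is not correct: $(\bI-\gamma\bP)(\bI+\gamma\bP)=\bI-\gamma^2\bP^2$, which would only yield a bound in terms of $(\bI-\gamma^2\bP^2)^{-1}\bv$ rather than the $(\bI-\gamma^2\bP)^{-1}\bv$ appearing in the lemma statement (and needed for the application via the variance Bellman equation). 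Your substitution $\boldsymbol{z}=(\bI-\gamma^2\bP)^{-1}\bv$ and the algebraic identity $(\bI-\gamma\bP)^{-1}\bv=(1-\gamma)(\bI-\gamma\bP)^{-1}\boldsymbol{z}+\gamma\boldsymbol{z}$ avoid the factorization entirely and land directly on the stated bound. So your argument on the second half is not just an alternative presentation; it actually closes a gap in the paper's written proof while remaining equally short.
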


\begin{proof}
	Since, every row of $\bP$ has $\ell_1$ norm at most $1$, by Cauchy-Schwarz for $i \in [n]$ we have 
	\[
	[\bP \sqrt{\bv}]_i = \sum_{j \in [n]} \bP_{ij} \sqrt{\bv}_j
	\leq 
	\sqrt{\sum_{j \in [n]} \bP_{ij} \cdot \sum_{j \in [n]} \bP_{ij} \bv_j }
	\leq \sqrt{\bP \bv} ~.
	\]
	Since $\bv$ is non-negative and applying $\bP$ preserves non-negativity, applying this inequality repeatedly yields that $\bP^{k} \sqrt{\bv} \leq \sqrt{\bP^{k} \bv}$ entrywise for all $k >0$. Consequently, Cauchy-Schwarz again yields
	\begin{align*}
	(\bI - \gamma \bP)^{-1} \sqrt{\bv}
	&= 
	\sum_{i = 0}^{\infty} \left[\gamma \bP\right]^{i} \sqrt{\bv}
	\leq \sum_{i = 0}^{\infty} \gamma^{i} \sqrt{\bP^{i} \bv}
	\leq 
	\sqrt{
		\sum_{i = 0}^{\infty}
		\gamma^i 
		\cdot 
		\sum_{i = 0}^{\infty} \gamma^i \bP^i v
	}\\
	&\le \sqrt{\frac{1}{1 - \gamma} \left\| (\bI - \gamma \bP )^{-1} \bv \right\|_\infty}
	~.
	\end{align*}
	Next, as $(\bI - \gamma \bP)(\bI + \gamma \bP) = (\bI - \gamma \bP^2)$ we see that $(\bI - \gamma \bP)^{-1} = (\bI + \gamma \bP) (\bI - \gamma^2 \bP)^{-1}$. Furthermore, as $\|\bP x\|_\infty \leq \|\bx\|_\infty$ for all $\bx$ we have $\|(\bI + \gamma \bP) x\|_\infty \leq (1 + \gamma) \| x\|_\infty$ for all $\bx$ and therefore $\| (\bI - \gamma \bP )^{-1} \bv \|_\infty \leq (1 + \gamma) \| (\bI - \gamma^2 \bP )^{-1} \bv \|_\infty$ as desired. 
\end{proof}
We are now ready to prove Lemma~\ref{lemma:variance bound}.
\begin{proof}[Proof of Lemma~\ref{lemma:variance bound}]
	The lemma follows directly from the application of Lemma~\ref{lemma:sqrv}. This proof is slightly simpler, tighter, and more general than the one in \cite{azar2013minimax}.
\end{proof}

\section{Lower Bounds on Policy}
\label{sec:lower bound}

\begin{lemma}
Suppose $\cM=(\cS, \cA, P, \gamma, \br)$ is a DMDP with an sampling oracle. Suppose $\pi$ is a given policy.
Then there is an algorithm, halts in $\wt{O}((1-\gamma)^{-3}\epsilon^{-2}|\cS|)$ time, outputs a vector $\bv$ such that, with high probability,
$\|\bv^\pi - \bv\|_\infty \le \epsilon$.
\end{lemma}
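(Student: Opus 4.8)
The quickest route is to observe that $\bv^\pi$ is nothing but the value of a \emph{one-action} DMDP. Concretely, form the Markov chain with transition matrix $\bP_\pi\in\RR^{\cS\times\cS}$ whose $s$-th row is $\bP_{s,\pi(s)}$ and reward $\br_\pi\in\RR^{\cS}$, $\br_\pi(s)=\br(s,\pi(s))$; then $\bv^\pi=(\bI-\gamma\bP_\pi)^{-1}\br_\pi$ is the optimal value of the DMDP $(\cS,\cA',\br_\pi,\bP_\pi,\gamma)$ in which the only action available at $s$ is $\pi(s)$. The generative model lets us sample from each $\bP_{s,\pi(s)}$ in $O(1)$ time, and only the $|\cS|$ pairs $(s,\pi(s))$ are ever touched, which is why $|\cA|$ disappears from the bound. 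I would therefore run the meta-algorithm of Section~\ref{sec:alg_policy} (Algorithm~\ref{alg-meta} driving Algorithm~\ref{alg-halfErr}) on this one-action DMDP, initialized at $\bv^{(0)}={\bf 0}$ (which satisfies ${\bf 0}\le\cT_\pi({\bf 0})$ since $\br_\pi\ge {\bf 0}$, and $\bv^*-{\bf 0}=\bv^\pi\le(1-\gamma)^{-1}\one$), and \emph{return the value vector} it produces. By Proposition~\ref{thm:alg 1} and Theorem~\ref{thm:dmdp1} specialized to $|\cA|=1$, the algorithm runs in time $\wt{O}\big(|\cS|(1-\gamma)^{-3}\epsilon^{-2}\big)$ and outputs $\bv$ with $\bv\le\cT_\pi(\bv)\le\bv^\pi=\bv^*$ and $\bv^*-\bv\le\epsilon\one$, i.e.\ $\|\bv^\pi-\bv\|_\infty\le\epsilon$, with high probability. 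Since the monotonicity bookkeeping in Algorithm~\ref{alg-halfErr} is not needed here (we only want accuracy), the one-sided shifts may be dropped, giving an even simpler variant.

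A more self-contained argument, which also exposes where the $(1-\gamma)^{-3}$ comes from, is the sparsify-and-solve approach: draw $m=\Theta\big((1-\gamma)^{-3}\epsilon^{-2}\log(|\cS|\delta^{-1})\big)$ i.i.d.\ samples from $\bP_{s,\pi(s)}$ for each $s$, build the empirical transition matrix $\wh{\bP}_\pi$ (with $\nnz(\wh{\bP}_\pi)\le|\cS|m$), set $\wh{\bv}^\pi=(\bI-\gamma\wh{\bP}_\pi)^{-1}\br_\pi$, and then run the high-precision solver of \cite{sidford2018variance} (Theorem~\ref{thm:high precision}) on the sparsified chain to get $\bv$ with $\|\bv-\wh{\bv}^\pi\|_\infty\le\epsilon$ in time $\wt{O}\big((\nnz(\wh{\bP}_\pi)+|\cS|(1-\gamma)^{-3})\log\epsilon^{-1}\log\delta^{-1}\big)=\wt{O}\big(|\cS|(1-\gamma)^{-3}\epsilon^{-2}\big)$; together with the $O(|\cS|m)$ sampling cost this is within the claimed budget. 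The crux is the single-policy analogue of Theorem~\ref{thm:qvalue approx}, namely $\|\wh{\bv}^\pi-\bv^\pi\|_\infty\le\epsilon$ w.h.p.\ for this value of $m$. Writing $\wh{\bv}^\pi-\bv^\pi=\gamma(\bI-\gamma\wh{\bP}_\pi)^{-1}(\wh{\bP}_\pi-\bP_\pi)\bv^\pi$, each coordinate of $(\wh{\bP}_\pi-\bP_\pi)\bv^\pi$ is the deviation of an $m$-sample empirical mean of $\bv^\pi(\cdot)$ from its expectation, so by Bernstein it is at most $O\big(\sqrt{\bsigma_{\bv^\pi}\log(|\cS|\delta^{-1})/m}\big)+O\big((1-\gamma)^{-1}\log(|\cS|\delta^{-1})/m\big)$ coordinatewise; propagating through the resolvent and invoking the total-variance bound of Lemma~\ref{lemma:variance bound}, $\|(\bI-\gamma\bP_\pi)^{-1}\sqrt{\bsigma_{\bv^\pi}}\|_\infty=O\big((1-\gamma)^{-3/2}\big)$, the dominant error is $O\big((1-\gamma)^{-3/2}\sqrt{\log(|\cS|\delta^{-1})/m}\big)\le\epsilon$ and the lower-order term is $O(\epsilon^2)$. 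Rescaling $\epsilon$ by a constant and applying the triangle inequality to $\|\bv-\wh{\bv}^\pi\|_\infty$ and $\|\wh{\bv}^\pi-\bv^\pi\|_\infty$ finishes it.

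The one genuinely delicate point — and the place a naive Hoeffding bound on $(\wh{\bP}_\pi-\bP_\pi)\bv^\pi$ would cost an extra factor of $(1-\gamma)^{-1}$, forcing $m\approx(1-\gamma)^{-4}\epsilon^{-2}$ — is that the matrix $(\bI-\gamma\wh{\bP}_\pi)^{-1}$ is itself random and correlated with the deviation vector it multiplies, so one cannot just take expectations. I would resolve this exactly as in \cite{azar2013minimax}: either use the ``absorbing-MDP'' device, or first swap $(\bI-\gamma\wh{\bP}_\pi)^{-1}$ for $(\bI-\gamma\bP_\pi)^{-1}$ at the cost of the correction term $\gamma\,(\bI-\gamma\bP_\pi)^{-1}(\wh{\bP}_\pi-\bP_\pi)(\wh{\bv}^\pi-\bv^\pi)$, bound $\|(\wh{\bP}_\pi-\bP_\pi)(\wh{\bv}^\pi-\bv^\pi)\|_\infty$ by a Hoeffding argument in terms of $\|\wh{\bv}^\pi-\bv^\pi\|_\infty$ itself, and close the resulting self-bounding recursion. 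The remaining steps (Bernstein/Hoeffding concentration, the law-of-total-variance computation behind Lemma~\ref{lemma:variance bound}, and the logarithmic-factor accounting in the runtime) are routine given the results already available in the excerpt.
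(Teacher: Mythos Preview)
Your proposal is correct and matches the paper: its own proof is a single sentence invoking Theorem~\ref{thm:high precision} (implicitly via the sparsification of Section~\ref{sec:alg_value}) on the one-action DMDP induced by $\pi$, which is exactly the reduction to $|\cA|=1$ that you identify. Both of your routes---running Algorithm~\ref{alg-meta}/Theorem~\ref{thm:dmdp1} on that chain, or sparsifying and then applying Theorem~\ref{thm:high precision} as in Theorem~\ref{thm:value}---are valid completions of that one-line citation, and the correlation issue you flag is indeed the only delicate point, resolved just as you say.
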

\begin{proof}
The lemma follows from a direct application of Theorem~\ref{thm:high precision}.
\end{proof}

\begin{remark}
	Suppose $|\cA|= \wt{\Omega}(1)$. 
	Suppose there is an algorithm that obtains an $\epsilon$-optimal policy with $Z$ samples, then the above lemma implies an algorithm for obtaining an $\epsilon$-optimal value function with $Z + \wt{O}((1-\gamma)^{-3}\epsilon^{-2}|\cS|)$ samples.
	By the $\Omega((1-\gamma)^{-3}\epsilon^{-2}|\cS||\cA|)$ sample bound on obtaining approximate value functions given in \cite{azar2013minimax}, the above lemma implies a 
	\[
	Z = \Omega((1-\gamma)^{-3}\epsilon^{-2}|\cS||\cA|) - \wt{O}((1-\gamma)^{-3}\epsilon^{-2}|\cS|) = \Omega((1-\gamma)^{-3}\epsilon^{-2}|\cS||\cA|)
	\] sample lower bound for obtaining an $\epsilon$-optimal policy. 
	
\end{remark}

\section{Missing Proofs}
\label{sec:missing proof}
Here are several standard properties of the Bellman value operator (see, e.g., \cite{bertsekas2013abstract}). 
\begin{fact}
	Let $\bv_1, \bv_2\in \RR^{\cS}$ be two vectors. Let $\cT$ be a value operator of a DMDP with discount factor $\gamma$. 
	Let $\pi\in \cA^{\cS}$ be an arbitrary policy.
	Then the follows hold.
	\begin{itemize}
		\item\textbf{Monotonicity}: 	If $\bv_1 \le \bv_2$ then $\cT(\bv_1) \le \cT(\bv_2)$;
		\item\textbf{Contraction}:	 $\|\cT(\bv_1) - \cT(\bv_2)\|_{\infty}\le \gamma \|\bv_1 - \bv_2\|_{\infty}$ and $\|\cT_{\pi}(\bv_1) - \cT_{\pi}(\bv_2)\|_{\infty}\le \gamma \|\bv_1 - \bv_2\|_{\infty}$.
	\end{itemize}
\end{fact}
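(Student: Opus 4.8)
The plan is to reduce both bullets to two elementary structural facts about the value operators, read off directly from the definitions of $\cT$ and $\cT_\pi$ given above. First, because each $\bP_{s,a}$ is a probability vector (nonnegative entries summing to one), the linear functional $\bu \mapsto \bP_{s,a}^\top \bu$ is both order-preserving (if $\bu \le \bu'$ then $\bP_{s,a}^\top \bu \le \bP_{s,a}^\top \bu'$) and $\ell_\infty$-nonexpansive, in the sense that $\bP_{s,a}^\top(\bu - \bu') \le \inorm{\bu - \bu'}$, since a convex combination of the entries of $\bu-\bu'$ cannot exceed their maximum in absolute value. Second, the pointwise maximum over $a \in \cA$ is order-preserving and, crucially, nonexpansive in the sense that $|\max_a f(a) - \max_a g(a)| \le \max_a |f(a) - g(a)|$ for any two families $f,g$. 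With these in hand, both properties follow by unfolding definitions.

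For monotonicity, I would fix $s$ and suppose $\bv_1 \le \bv_2$. For every action $a$, nonnegativity of $\bP_{s,a}$ together with $\gamma > 0$ gives $\br_{s,a} + \gamma \bP_{s,a}^\top \bv_1 \le \br_{s,a} + \gamma \bP_{s,a}^\top \bv_2$; taking the maximum over $a$ of both sides and using that $\max$ preserves inequalities yields $\cT(\bv_1)_s \le \cT(\bv_2)_s$. Since $s$ was arbitrary, this is exactly $\cT(\bv_1) \le \cT(\bv_2)$.

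For contraction, $\cT_\pi$ is the simpler case: the reward term cancels, so $\cT_\pi(\bv_1)_s - \cT_\pi(\bv_2)_s = \gamma \bP_{s,\pi(s)}^\top (\bv_1 - \bv_2)$, whose absolute value is at most $\gamma \inorm{\bv_1 - \bv_2}$ by $\ell_\infty$-nonexpansiveness of $\bP_{s,\pi(s)}^\top$; maximizing over $s$ gives the claimed bound. For $\cT$, I would apply the max-nonexpansiveness inequality with $f(a) = \br_{s,a} + \gamma \bP_{s,a}^\top \bv_1$ and $g(a) = \br_{s,a} + \gamma \bP_{s,a}^\top \bv_2$: the rewards cancel inside $|f(a) - g(a)| = \gamma |\bP_{s,a}^\top (\bv_1 - \bv_2)| \le \gamma \inorm{\bv_1 - \bv_2}$ uniformly in $a$, so $|\cT(\bv_1)_s - \cT(\bv_2)_s| \le \gamma \inorm{\bv_1 - \bv_2}$ for every $s$, which is the $\ell_\infty$ bound.

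The only step requiring any care is the max-nonexpansiveness lemma, and it is also the single place where a careless subtraction of the two maxima can go wrong, since one cannot cancel against an arbitrary maximizer. I would prove it directly: letting $a^\star$ attain $\max_a f(a)$, we have $\max_a f(a) - \max_a g(a) \le f(a^\star) - g(a^\star) \le \max_a |f(a) - g(a)|$, and the symmetric argument swapping $f$ and $g$ bounds the quantity from below, yielding the absolute-value bound. Everything else is a one-line consequence of the nonnegativity and row-stochasticity of $\bP$.
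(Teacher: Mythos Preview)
Your proof is correct and complete. The paper does not actually prove this Fact at all: it is stated as a standard property of the Bellman operator with a reference to \cite{bertsekas2013abstract}, so your elementary argument from row-stochasticity of $\bP_{s,a}$ and the $\max$-nonexpansiveness lemma supplies strictly more detail than the paper itself.
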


\subsection{Missing Proofs from Section~\ref{sec:alg_policy}}
\label{sec:proof of main alg}
To begin, we introduce two standard concentration results.
Let $\bp\in \Delta_{\cS}$ be a probability vector, and $\bv\in\RR^{\cS}$ be a vector. 
Let ${\bp}_m\in\Delta_{\cS}$ be empirical estimations of $\bp$ using $m$ i.i.d. samples from the distribution $\bp$.
For instance, let these samples be $s_1, s_2, \ldots, s_m\in \cS$, then $\forall s\in \cS: {\bp}_m(s) = \sum_{j=1}^m\one(s_j = s)/m$.

\begin{theorem}[Hoeffding Inequality]
	\label{them:hoeffding}
	Let $\delta\in (0,1)$ be a parameter, vectors $\bp, \bp_m$ and $\bv$ defined above. 
	Then with probability at least $1-\delta$,
	\[
	\big|{\bp^{\top}\bv - \bp_m^{\top}\bv}\big| \le \inorm{\bv}\cdot\sqrt{{2m^{-1}\log(2\delta^{-1})}}.
	\]
\end{theorem}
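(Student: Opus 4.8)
The plan is to reduce the claim to the classical Hoeffding bound for an average of bounded i.i.d.\ real-valued random variables. First I would observe that the empirical inner product $\bp_m^\top \bv$ is exactly an empirical mean: since $\bp_m(s) = \frac1m \sum_{j=1}^m \one(s_j = s)$, we have $\bp_m^\top \bv = \frac1m \sum_{j=1}^m \bv(s_j)$. Setting $X_j := \bv(s_j)$ for $j=1,\dots,m$, the fact that $s_1,\dots,s_m$ are i.i.d.\ draws from $\bp$ makes $X_1,\dots,X_m$ i.i.d.\ real random variables, each with expectation $\E[X_j] = \sum_{s\in\cS} \bp(s)\bv(s) = \bp^\top \bv$. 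Thus the quantity to control, $\bp^\top \bv - \bp_m^\top \bv$, is precisely the deviation of the sample mean $\frac1m\sum_{j=1}^m X_j$ from its expectation.

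Next I would record the boundedness needed to invoke Hoeffding. Each $X_j = \bv(s_j)$ takes values in the interval $[\min_{s}\bv(s),\,\max_{s}\bv(s)]$, whose length is at most $2\inorm{\bv}$ because $\max_s \bv(s)\le \inorm{\bv}$ and $\min_s \bv(s)\ge -\inorm{\bv}$. Applying the standard two-sided Hoeffding inequality for an average of $m$ independent variables each lying in an interval of length $R$ yields, for every $t>0$, $\Pr\!\left[\left|\frac1m\sum_{j=1}^m X_j - \E X_1\right|\ge t\right]\le 2\exp\!\left(-2mt^2/R^2\right)$. Substituting $R\le 2\inorm{\bv}$, so that $R^2\le 4\inorm{\bv}^2$, gives a tail of $2\exp\!\left(-mt^2/(2\inorm{\bv}^2)\right)$.

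Finally I would calibrate the threshold. Setting this right-hand side equal to $\delta$ and solving for $t$ gives $mt^2/(2\inorm{\bv}^2)=\log(2\delta^{-1})$, i.e. $t=\inorm{\bv}\sqrt{2m^{-1}\log(2\delta^{-1})}$, which is exactly the claimed bound. Equivalently, with this choice of $t$, with probability at least $1-\delta$ the deviation $|\bp^\top\bv - \bp_m^\top\bv|$ does not exceed $\inorm{\bv}\sqrt{2m^{-1}\log(2\delta^{-1})}$.

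Since this is a textbook concentration statement, there is no genuine obstacle; the only points requiring care are that the relevant range is the oscillation of $\bv$ (bounded by $2\inorm{\bv}$, not $\inorm{\bv}$) and that the factor of $2$ from the two-sided bound is absorbed into the $\log(2\delta^{-1})$ term, so that the constants line up precisely with the stated inequality.
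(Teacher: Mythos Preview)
Your argument is correct and is the standard textbook derivation: rewrite $\bp_m^\top\bv$ as an empirical mean of i.i.d.\ bounded variables, bound their range by $2\inorm{\bv}$, apply the two-sided Hoeffding inequality, and solve for the threshold. The paper itself does not supply a proof of this statement---it is introduced as one of ``two standard concentration results'' and simply cited---so there is nothing to compare against beyond noting that your derivation is exactly the intended justification.
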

\begin{theorem}[Bernstein Inequality]
	\label{them:bernstein}
	Let $\delta\in (0,1)$ be a parameter, vectors $\bp, \bp_m$ and $\bv$ defined as in Theorem~\ref{them:hoeffding}. 
	Then with probability at least $1-\delta$ 
	\[
	\big|{\bp^{\top}\bv - \bp_m^{\top}\bv}\big| \le \sqrt{2m^{-1}\underset{s'\sim \bp}{\var}(\bv(s'))\cdot \log({2}{\delta^{-1}})} + ({2}/{3})m^{-1}{\inorm{\bv}\cdot\log(2\delta^{-1})},
	\]
	where $\underset{s'\sim \bp}{\var}(\bv(s')) = \bp^\top \bv^2 - (\bp^\top \bv)^2$.
\end{theorem}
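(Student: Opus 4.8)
The plan is the textbook Chernoff-bound argument for a sum of bounded i.i.d.\ random variables, followed by a routine inversion of the resulting exponential tail. I would write $X_j:=\bv(s_j)$ for $j=1,\dots,m$, i.i.d.\ with $s_j\sim\bp$, so that $\bp_m^\top\bv=\tfrac1m\sum_{j=1}^mX_j$, $\EE X_j=\bp^\top\bv=:\mu$, and $\var(X_j)=\var_{s'\sim\bp}(\bv(s'))=:\sigma^2$. Since $\bp$ and $\bp_m$ are both probability vectors, subtracting a constant from every entry of $\bv$ changes neither $\bp_m^\top\bv-\bp^\top\bv$ nor $\sigma^2$; centering $\bv$ at the midpoint of the range of its entries, I may therefore assume $|X_j-\mu|\le M:=\inorm{\bv}$ almost surely (a half-width of an interval is at most its $\ell_\infty$ radius; in every application in the paper $\bv\ge\mathbf0$, for which this is immediate).

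Next I would establish the moment-generating-function bound. For a mean-zero random variable $Y$ with $|Y|\le M$ one has $\EE[Y^k]\le M^{k-2}\EE[Y^2]$ for all $k\ge2$, so summing the exponential series gives
\[
\EE\big[e^{\lambda Y}\big]\;\le\;1+\frac{\EE[Y^2]}{M^2}\big(e^{\lambda M}-1-\lambda M\big)\;\le\;\exp\!\Big(\tfrac{\lambda^2\EE[Y^2]}{2(1-\lambda M/3)}\Big)
\]
for $0\le\lambda<3/M$, using $1+x\le e^x$ and the scalar inequality $\tfrac{e^x-1-x}{x^2}\le\tfrac1{2(1-x/3)}$ on $[0,3)$. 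Applying this to $Y_j:=X_j-\mu$ and multiplying over the independent samples yields $\EE\big[e^{\lambda\sum_jY_j}\big]\le\exp\!\big(\tfrac{m\lambda^2\sigma^2}{2(1-\lambda M/3)}\big)$.

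Then I would invoke Markov's inequality: for $t>0$ and $0\le\lambda<3/M$, $\PP\big[\tfrac1m\sum_jY_j\ge t\big]\le\exp\!\big(\tfrac{m\lambda^2\sigma^2}{2(1-\lambda M/3)}-\lambda mt\big)$, and the choice $\lambda=t/(\sigma^2+Mt/3)\in[0,3/M)$ collapses the exponent to $-mt^2/(2\sigma^2+\tfrac23Mt)$. Running the same argument with $-\bv$ and taking a union bound over the two tails gives $\PP\big[|\bp_m^\top\bv-\bp^\top\bv|\ge t\big]\le 2\exp\!\big(-mt^2/(2\sigma^2+\tfrac23Mt)\big)$. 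Setting the right-hand side to $\delta$ yields $mt^2=(2\sigma^2+\tfrac23Mt)\log(2\delta^{-1})$, i.e.\ $t^2\le a+bt$ with $a=2m^{-1}\sigma^2\log(2\delta^{-1})$ and $b=\tfrac23m^{-1}M\log(2\delta^{-1})$; since $t^2\le a+bt$ forces $t\le b+\sqrt a$, the threshold solving the equation is at most $\sqrt{2m^{-1}\sigma^2\log(2\delta^{-1})}+\tfrac23m^{-1}\inorm{\bv}\log(2\delta^{-1})$, which is exactly the claimed inequality, holding with probability at least $1-\delta$.

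The only step carrying any subtlety is the moment-generating-function estimate in the second paragraph: bounding the central moments by $M^{k-2}\EE[Y^2]$, resumming, and invoking the elementary bound $\tfrac{e^x-1-x}{x^2}\le\tfrac1{2(1-x/3)}$. This is entirely classical, and everything else — optimizing $\lambda$ and solving the quadratic in $t$ — is bookkeeping.
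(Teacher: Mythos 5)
The paper never proves Theorem~\ref{them:bernstein}; it is introduced, alongside Hoeffding's inequality, as one of ``two standard concentration results'' and used as a black box, so there is no internal proof to compare against. Your Chernoff/moment-generating-function derivation is the classical proof and is correct: the central-moment bound $\EE[Y^k]\le M^{k-2}\EE[Y^2]$, the resummation via $(e^x-1-x)/x^2\le 1/(2(1-x/3))$ on $[0,3)$, the choice $\lambda=t/(\sigma^2+Mt/3)$ collapsing the exponent to $-mt^2/(2\sigma^2+\tfrac23Mt)$, and the inversion $t^2\le a+bt\Rightarrow t\le b+\sqrt a$ all check out and reproduce the stated constants exactly. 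The one step that deserves a slightly more careful sentence is the boundedness reduction: the MGF argument needs $|X_j-\EE X_j|\le M$ almost surely, and centering $\bv$ at the midpoint of its range only gives $|X_j|\le r$ with $r$ the half-width, hence $|X_j-\EE X_j|\le 2r$, which for a general signed $\bv$ is bounded by $2\inorm{\bv}$ rather than $\inorm{\bv}$ (so the second term of the stated bound would double). As you correctly flag, this is moot for the paper, since every invocation (e.g., in the proof of Lemma~\ref{lemma: emprical mean}) applies the theorem to $\bv\ge\mathbf{0}$, for which $X_j$ and $\EE X_j$ both lie in $[0,\inorm{\bv}]$ and the required bound is immediate; I would simply state that hypothesis rather than route it through the centering remark.
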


\begin{proof}[Proof of Lemma~\ref{lemma: emprical mean}]
	By Theorem~\ref{them:bernstein} and a union bound over all $(s,a)$ pairs, with probability at least $1-\delta/4$, for every $(s,a)$, we have
	\begin{align}
	\label{eq:proof emprical 1}
	\big|{\wt{\bw}(s,a) - \bP_{s,a}^\top\bv^{(0)}}\big|\le\sqrt{{2\sigma_{\bv^{(0)}}\cdot m_1^{-1}\cdot{L}}} +
	{2}\cdot (3m_1)^{-1}\cdot\norm{\bv^{(0)}}_{\infty}\cdot {L},
	\end{align}
	which is the first inequality.
	
	Next, by Theorem~\ref{them:hoeffding} and a union bound over all $(s,a)$ pairs, with probability at least $1-\delta/4$, for every $(s,a)$, we have
	\[
	\big|{\wt{\bw}(s,a) - \bP_{s,a}^\top\bv^{(0)}}\big|\le \norm{\bv^{(0)}}_{\infty} \cdot \sqrt{{2m_1^{-1}{L}}},
	\]
	which we condition on.
	Thus
	\begin{align*}
	\big|\wt{\bw}(s,a)^2 - (\bP_{s,a}^\top\bv^{(0)})^2\big| &= 
	(\wt{\bw}(s,a) + \bP_{s,a}^\top\bv^{(0)})\cdot |\wt{\bw}(s,a) - \bP_{s,a}^\top\bv^{(0)}|\\
	&\le \bigg[2 \bP_{s,a}^\top\bv^{(0)} + \norm{\bv^{(0)}}_{\infty} \cdot \sqrt{{2m_1^{-1}{L}}}\bigg]\cdot|\wt{\bw}(s,a) - \bP_{s,a}^\top\bv^{(0)}|\\
	&\le
	2( \bP_{s,a}^\top\bv^{(0)})\cdot\norm{\bv^{(0)}}_{\infty} \cdot \sqrt{{2m_1^{-1}{L}}} + \norm{\bv^{(0)}}^2_{\infty} \cdot {{2m_1^{-1}{L}}}.
	\end{align*}
	Since $\bP_{s,a}^\top\bv^{(0)}\le \norm{\bv^{(0)}}_\infty$, we obtain
	\[
	\big|\wt{\bw}(s,a)^2 - (\bP_{s,a}^\top\bv^{(0)})^2\big| \le 3\norm{\bv^{(0)}}_{\infty}^2 \cdot \sqrt{{2m_1^{-1}{L}}},
	\]
	provided $2m_1^{-1}{L}\le 1$.
	Next by Lemma~\ref{them:hoeffding} and a union bound over all $(s,a)$ pairs, with probability at least $1-\delta/4$, for every $(s,a)$, we have 
	\[
	\left|\frac{1}{m_1} \sum_{j=1}^{m_1}\bv^2(s_{s,a}^{(j)}) - \bP_{s,a}^\top\bv^2\right|\le \norm{\bv^{(0)}}_{\infty}^2 \cdot \sqrt{{2L / m_1}}.
	\]
	By a union bound, we obtain, with probability at least $1-\delta/2$,
	\begin{align}
	\label{eq:proof emprical 2}
	\big|\wh{\bsigma}(s,a) - \bsigma_{\bv^{(0)}}(s, a)\big| 
	&\le \big|\wt{\bw}(s,a)^2 - (\bP_{s,a}^\top\bv^{(0)})^2\big| + \big|{m_1^{-1}}\sum_{j=1}^{m_1}\bv^2(s_{s,a}^{(j)}) - \bP_{s,a}^\top\bv^2\big|\nonumber\\
	&\le 4\norm{\bv^{(0)}}_{\infty}^2 \cdot \sqrt{{2m_1^{-1}{L}}}.
	\end{align}
	By a union bound, with probability at least $1-\delta$, both \eqref{eq:proof emprical 1} and \eqref{eq:proof emprical 2} hold, concluding the proof.
\end{proof}

\begin{proof}[Proof of Lemma~\ref{lemma: variance triangle}]
	Since for each $(s,a)$, $\bsigma_{\bv}(s,a)$ is a variance, then 
	we have triangle inequality,
	\[
	\sqrt{\bsigma_{\bv}} \le \sqrt{\bsigma_{\bv^*}} + \sqrt{\bsigma_{\bv-\bv^*}}.
	\]
	Observing that
	\[
	\bsigma_{\bv-\bv^*}(s,a) \le \bP_{s,a}^\top(\bv-\bv^*)^2
	\le \epsilon^2\cdot\one.
	\]
	We conclude the proof by taking a square root of all three sides of the above inequality.
\end{proof}

\begin{proof}[Proof of Lemma~\ref{lemma:bounds on g}]
	Recall that for each $(s, a) \in \cS\times \cA$, 
	\[
	\bg^{(i)}(s,a)= \frac{1}{m_2} \sum_{j=1}^{m_2} \big[\bv^{(i)}(s_{s,a}^{(j)}) - \bv^{(0)}(s_{s,a}^{(j)}) \big]- (1-\gamma)\frac{u}{8} ~,
	\] where $m_2 = 128(1-\gamma)^{-2}\cdot\log(2|\cS||\cA|R/\delta)$ and $s_{s,a}^{(1)}, s_{s,a}^{(2)}, \ldots, s_{s,a}^{(m_2)}$ is a sequence of independent samples from $\bP_{s,a}$.
	Thus by Theorem~\ref{them:hoeffding} and a union bound over $\cS\times \cA$, with probability at least $1-\delta/R$, we have
	\begin{align*}
	\forall (s,a)\in\cS\times\cA: \bigg|\sum_{j=1}^{m_2} \big[\bv^{(i)}(s_{s,a}^{(j)}) &- \bv^{(0)}(s_{s,a}^{(j)}) \big] - \bP_{s,a}^{\top} \big[\bv^{(i)}  - \bv^{(0)}\big]\bigg| \\
	&\le \norm{\bv^{(i)} - \bv^{(0)}}_\infty\sqrt{{2m_2^{-1}\log(2|\cS||\cA|\delta'^{-1})}}
	\le (1-\gamma)u/8.
	\end{align*}
	Finally by shifting the estimate to have one-sided error, we obtain the one-side error $(1-\gamma)u/4$ in the statement of this lemma.
\end{proof}

\begin{proof}[Proof of Lemma~\ref{lemma:induction lemma}]
	For $i=0$, $\bQ^{(0)} = \br + \gamma \bw$.
	By Lemma~\ref{lemma: emprical mean}, with probability at least $1-\delta$,
	\begin{align*}
	|{\wt{\bw} - \bP\bv^{(0)}}| \le \sqrt{2\alpha_1\bsigma_{\bv^{(0)}}} +
	\frac{2}{3}\cdot\alpha_1\cdot \norm{\bv^{(0)}}_{\infty}\one,
	\end{align*}
	and
	\begin{align}
	\label{eqn:var}
	\big|\wh{\bsigma}- \bsigma_{\bv^{(0)}}\big| \le 4\norm{\bv^{(0)}}_{\infty}^2 \cdot \sqrt{2\alpha_1} \one,
	\end{align}
	which we condition on. 
	We have
	\[
	|{\wt{\bw} - \bP\bv^{(0)}}| \le  \sqrt{2\alpha_1\wh{\bsigma}} + (4\alpha_1^{3/4}\norm{\bv^{(0)}}_\infty +
	\frac{2}{3}\cdot\alpha_1\cdot \norm{\bv^{(0)}}_{\infty})\one.
	\]
	Thus 
	\begin{align}
	\label{eqn:upper w}
	\bw = \wt{\bw} - \sqrt{2\alpha_1\wh{\bsigma}} - 4\alpha_1^{3/4}\norm{\bv^{(0)}}_\infty\one -
	\frac{2}{3}\cdot\alpha_1\cdot \norm{\bv^{(0)}}_{\infty}\one 
	\le  \bP\bv^{(0)},
	\end{align}
	and
	\begin{align*}
	\bw\ge \bP\bv^{(0)} - 2\sqrt{2\alpha_1\wh{\bsigma}} - (8\alpha_1^{3/4}\norm{\bv^{(0)}}_\infty +
	\frac{4}{3}\cdot\alpha_1\cdot \norm{\bv^{(0)}}_{\infty})\one
	. 
	\end{align*}
	By \eqref{eqn:var} and Lemma~\ref{lemma: variance triangle}, we have
	\[
	\sqrt{\wh{\bsigma}} \le \sqrt{\bsigma_{\bv^{(0)}}} + 2 \norm{\bv^{(0)}}_{\infty} (2\alpha)^{1/4}\one
	\le \sqrt{\bsigma_{\bv^{*}}} +  u \one+ 2 \norm{\bv^{(0)}}_{\infty} (2\alpha)^{1/4}\one.
	\]
	we have
	\begin{align}
	\label{eqn:lower w}
	\bw\ge \bP\bv^{(0)} - 2\sqrt{2\alpha_1\bsigma_{\bv^*}} -2\sqrt{2\alpha_1}u\one- 16\alpha_1^{3/4}\norm{\bv^{(0)}}_\infty\one -
	\frac{4}{3}\cdot\alpha_1\cdot \norm{\bv^{(0)}}_{\infty}\one 
	\end{align}
	
	For the rest of the proof, we condition on the event that \eqref{eqn:upper w} and \eqref{eqn:lower w} hold, which happens with probability at least $1-\delta$.
	Denote $\bv^{(-1)}={\bf 0}$.
	Thus we have $\bv^{(-1)}\le \bv^{(0)} \le \cT_{\pi^{(0)}} (\bv^{(0)})$.
	Next we prove the lemma by induction on $i$.
	Assume for some $i\ge 1$, 
	with probability at least $1-(i-1)\delta'$ the following holds, 
	\[
	\forall 0\le k\le i-1:\quad\bv^{(k-1)}\le \bv^{(k)} \le \cT_{\pi^{(k)}} (\bv^{(k)}),
	\]
	which we condition on.
	Next we show that the lemma statement holds for $k=i$.
	By definition of $\bv^{(i)}$ (Line~\ref{alg: v1} and \ref{alg: v2}),
	\[
	\bv^{(i-1)}\le \bv^{(i)} \quad\text{and}\quad  \bv\big(\bQ^{(i-1)}\big)\le \bv^{(i)}.
	\]
	Furthermore, since $\bv^{(0)}\le \bv^{(1)} \le \ldots \le \bv^{(i-1)}\le \cT_{\pi^{i-1}}\bv^{(i-1)} \le \cT\bv^{(i-1)}\le \cT^{\infty}\bv^{(i-1)} = \bv^*$, we have
	\[
	\bv^{(i)}-\bv^{(0)} \le \bv^{*} - \bv^{(0)} \le u\one.
	\]
	By Lemma~\ref{lemma:bounds on g}, we have, with probability at least $1-\delta'$
	\begin{align}
	\label{eqn:bound g}
	\bP  \big[\bv^{(i)}  - \bv^{(0)}\big] - \frac{(1-\gamma) u}{8}\cdot\one\le \bg^{(i)} \le \bP  \big[\bv^{(i)}  - \bv^{(0)}\big],
	\end{align}
	which we condition on for the rest of the proof.
	Thus we have
	\[
	\bQ^{(i)} = \br + \gamma(\bw + \bg^{(i)}) \le \br + \gamma(\bP\bv^{(0)} + \bP\bv^{(i)} - \bP\bv^{(0)}) = \br + \gamma \bP \bv^{(i)}.
	\]
	To show $\bv^{(i)}\le \cT_{\pi^{(i)}} \bv^{(i)}$, we notice that if for some $s$,  $\pi^{(i)}(s)\neq \pi^{(i-1)}(s)$, then
	\[
	\bv^{(i)}(s) \le [\cT_{\pi^{(i)}} \bv^{(i-1)}](s) \le  [\cT_{\pi^{(i)}} \bv^{(i)}](s),
	\]
	where the first inequality follows from $\bv^{(i)}(s) \le \br(s, \pi^{(i)}(s))+\gamma\bP_{s, \pi^{(i)}(s)}^\top\bv^{(i-1)} = \cT_{\pi^{(i)}} \bv^{(i-1)}$.
	On the other hand, if $\pi^{(i)}(s)= \pi^{(i-1)}(s)$, then 
	\[
	\bv^{(i)}(s) = \bv^{(i-1)}(s) \le (\cT_{\pi^{(i-1)}} \bv^{(i-1)})(s) \le (\cT_{\pi^{(i-1)}} \bv^{(i)})(s) = (\cT_{\pi^{(i)}} \bv^{(i)})(s).
	\]
	This completes the induction step.
	Lastly, combining \eqref{eqn:lower w} and \eqref{eqn:bound g}, we have
	\begin{align*}
	\bQ^* - \bQ^{(i)} &= \bQ^{*} - \br - \gamma (\bw + \bg^{(i)}) 
	= \gamma\bP\bv(\bQ^*) - \gamma (\bw + \bg^{(i)}) \\
	&=\gamma\bP\bv(\bQ^*) - \gamma \bP(\bv^{(i)} -\bv^{(0)}) - \gamma \bP \bv^{(0)} + \bxi^{(i)}\\
	&=\gamma\bP\bv(\bQ^*) - \gamma \bP\bv^{(i)} + \bxi^{(i)},
	\end{align*}
	where
	\[
	\bxi^{(i)} \le {(1-\gamma) u}/{8}\cdot \one + 2\sqrt{2\alpha_1\bsigma_{\bv^*}} +2\sqrt{2\alpha_1} u\cdot\one+ 16\alpha_1^{3/4}\norm{\bv^{(0)}}_\infty\cdot\one +
	({4}/{3})\cdot\alpha_1\cdot \norm{\bv^{(0)}}_{\infty}\cdot\one,
	\]
	where $\alpha_1=\log(8|\cS||\cA|\delta^{-1})/m_1 \le 1$.
	Mover, since $ \bv(\bQ^{(i-1)})\le \bv^{(i)}$, we obtain
	\begin{align*}
	\bQ^* - \bQ^{(i)} &\le\gamma\bP \bv(\bQ^*) - \gamma \bP\bv(\bQ^{(i-1)}) + \bxi^{(i)} 
	\le \gamma\bP^{\pi^*}\bQ^* - \gamma \bP^{\pi^*}\bQ^{(i-1)} + \bxi^{(i)},
	\end{align*}
	where $\pi^*$ is an arbitrary optimal policy and we use the fact that $\max_{a}\bQ^{*}(s,a) = \bQ^*(s,\pi^*(s))$.
	This completes the proof of the lemma.
\end{proof}

\begin{proof}[Proof of Proposition~\ref{thm:alg 1}]
	Recall that we are able to sample a state from each $\bP_{s,a}$ with time $O(1)$.
	Let $\beta=(1-\gamma)^{-1}$,  $R=\lceil c_1\beta\ln[\beta u^{-1}]\rceil,
	m_1= c_2\beta^3u^{-2}\cdot\log(8|\cS||\cA|\delta^{-1})$ and $ m_2= c_3\beta^2\cdot\log[2R|\cS||\cA|\delta^{-1}]$ for some constants $c_1, c_2$ and $c_3$ required in Algorithm~\ref{alg-halfErr}.
	In the following proof, we set  $c_1, c_2, c_3$ to be sufficiently large but otherwise arbitrary absolute constants (e.g., $c_1 \ge 4, c_2\ge8192, c_3\ge128$).
	By Lemma~\ref{lemma:induction lemma}, with probability at least $1-2\delta$ for each $1\le i\le R$, we have $\bv^{(i-1)}\le \bv^{(i)}\le \cT_{\pi^{(i)}} \bv^{(i)}$, and $\bQ^{(i)}\le \br+\gamma\bP \bv^{(i)}$,
	\[
	\bQ^* - \bQ^{(i)} \le \gamma \bP^{\pi^*}\big[\bQ^* - \bQ^{(i-1)}\big] 
	+\bxi, 
	\]
	where
	\[
	\bxi\le {(1-\gamma){u}}/{C}\cdot \one + 
	C\sqrt{\alpha_1\bsigma_{\bv^*}} + C\alpha_1^{3/4}\norm{\bv^{(0)}}_\infty\cdot\one
	\]
	for  $\alpha_1=\log(8|\cS||\cA|\delta^{-1})/m_1$ and sufficiently large constant $C$.
	Solving the recursion, we obtain
	\begin{align*}
	\bQ^* - \bQ^{(R-1)} &\le \gamma^{R-1} \bP^{\pi^*}\big[\bQ^* - \bQ^{0}\big] 
	+\sum_{i=0}^{R-1}\gamma^{i}(\bP^{\pi^*})^{i}\bxi \\
	&\le \gamma^{R-1} \bP^{\pi^*}\big[\bQ^* - \bQ^{0}\big] + (I-\gamma\bP^{\pi^*})^{-1}\bxi.
	\end{align*}
	We first apply a na\"ive bound $\norm{\bP^{\pi^*}\big[\bQ^* - \bQ^{0}\big]}_{\infty}\le ({1-\gamma})^{-1}$.
	Hence
	\[
	\gamma^{R-1}\bP^{\pi^*}\big[\bQ^* - \bQ^{0}\big] \le \frac{{u}}{4}\cdot\one,
	\]
	where $R=\lceil(1-\gamma)^{-1}\ln[4(1-\gamma)^{-1}{u}^{-1}]\rceil + 1$.
	The next step is the key to the improvement in our analysis. We further apply the bound in Lemma~\ref{lemma:variance bound}, given by	
	\[
	(\bI-\gamma\bP^{\pi^*})^{-1}\sqrt{\bsigma_{\bv^*}}\le \min(2\gamma^{-1}(1-\gamma)^{-1.5}, (1-\gamma)^{-2})\cdot\one \le 3(1-\gamma)^{-1.5}\cdot\one,
	\]
	where the last inequality follows since $\min(2\gamma^{-1},(1-\gamma)^{-1/2})\le 3$.
	With $\norm{(\bI-\gamma\bP^{\pi^*})^{-1}\one}_{\infty}\le (1-\gamma)^{-1}$ and $\norm{\bv^{(0)}}_\infty\le (1-\gamma)^{-1}$, we have,
	\begin{align*}
	(\bI-\gamma\bP^{\pi^*})^{-1}\bxi
	&\le \left[\frac{{u}}{8} + C'\sqrt{\frac{2\alpha_1}{\gamma^2(1-\gamma)^{3}}} +  
	C'\frac{\alpha_1^{3/4}}{(1-\gamma)^2} 
	\right]\cdot\one\\
	&\le \bigg[\frac{{u}}{8} + \frac{{u}}{16}
	+  \bigg(\frac{(1-\gamma)^3{u}^2}{C''\cdot(1-\gamma)^{8/3}}\bigg)^{3/4}  
	\bigg]\cdot\one\\
	&\le \frac{{u}}{4}\cdot \one,
	\end{align*}
	for some sufficiently large $C'$ and $C''$, 
	which depend on $c_1, c_2$ and $c_3$.
	Since $\bv(\bQ^{(R-1)})\le\bv^{(R)}$,
	we have 
	\[
	\bv^* - \bv^{(R)} \le \bv^* -\bv(\bQ^{(R-1)}) \le\gamma^{R-1}\bP^{\pi^*}\big[\bQ^* - \bQ^{0}\big] + (\bI-\gamma\bP^{\pi^*})^{-1}\bxi \le \frac{{u}}{2}\cdot\one.
	\] 
	This completes the proof of the correctness.
	It remains to bound the time complexity.
	The initialization stage costs $O( m_1)$ time per $(s,a)$.
	Each iteration costs $O(m_2)$ time per $(s,a)$.
	We thus have the total time complexity as
	\[
	O( m_1 + R m_2)|\cS||\A| = O\bigg[{\frac{|\cS||\cA|}{(1-\gamma)^3}\cdot \log\frac{|\cS||\cA|}{\delta\cdot(1-\gamma)\cdot{u}}\cdot\bigg(\frac{1}{{u}^2}+\log\frac{1}{(1-\gamma)\cdot{u}}\bigg)}\bigg].
	\]
	Since $\log[(1-\gamma)^{-1}{u}^{-1}] = O(\log[(1-\gamma)^{-1}]{u}^{-2})$, we conclude the proof.
\end{proof}



\subsection{Missing Analysis of Halving Errors}
\label{sec:half}

We refer in this section to Algorithm~\ref{alg-halfErr} as a subroutine \halfErr, which given an input MDP $\cM$ with a sampling oracle, an input value function $\bv^{(i)}$ and an input policy $\pi^{(i)}$, outputs an value function  $\bv^{(i+1)}$ and a policy $\pi^{(i+1)}$ such that, with high probability (over the new samples of the sampling oracle),
\[
\|\bQ^{(i+1)} - \bQ^*\|_{\infty}\le \|\bQ^{(i)} - \bQ^*\|_{\infty}/2 \quad\text{and}\quad 
\|\bv^{\pi^{(i+1)}}-\bv^*\|_{\infty}\le \|\bv^{\pi^{(i)}}-\bv^*\|_{\infty}/2.
\]
After $\log[\epsilon^{-1}(1-\gamma)^{-1}]$ calls of the subroutine \halfErr, the final output policy and value functions are $\epsilon$-close to the optimal ones with high probability. 

We summarize our meta algorithm in Algorithm~\ref{alg-meta}.
Note that in the algorithm, each call of $\halfErr$ will draw new samples from the sampling oracle. 
These new samples guarantee the independence of successive improvements and also save space of the algorithm.
For instance, 
the algorithm \halfErr~only needs to use $O(|\cS||\cA|)$ words of memory instead of storing all the samples.  
The guarantee of the algorithm is summarized in Proposition~\ref{prop:meta}.

\begin{proposition}
	\label{prop:meta}
	Let $\cM=(\cS, \cA, \br, \bP, \gamma)$ with a sampling oracle.
	Suppose \halfErr~is an algorithm that takes an input $\bv^{(i)}$ and an input policy $\pi^{(i)}$ and a number $u\in[0,(1-\gamma)^{-1}]$ satisfying $\bv^{*}-u\one\le \bv^{(i)}\le \bv^{\pi^{(i)}}$, halts in time $\tau$ and outputs a $\bv^{(i+1)}$ and a policy $\pi^{(i+1)}$ satisfying,
	\begin{align*}
	\bv^{*}-\frac{u}{2}\cdot\one\le \bv^{(i+1)}\le \bv^{\pi^{(i+1)}} \le \bv^{*}.
	\end{align*}
	with probability at least $1-(1-\gamma)\cdot\epsilon\cdot\delta$ (over the randomness of the new samples given by the sampling oracle), then the meta algorithm described in Algorithm~\ref{alg-meta}, given input $\cM$ and the sampling oracle, halts in $\tau \cdot \log(\epsilon^{-1}\cdot (1-\gamma)^{-1})$ and outputs an policy $\pi^{(R)}$ such that
	\[
	\bv^{*}-{\epsilon}\cdot\one\le \bv^{(R)}\le \bv^{\pi^{(R)}} \le \bv^{*}
	\]
	with probability at least $1-\delta$ (over the randomness of all samples drawn from the sampling oracle).
	Moreover, if \halfErr~uses space $s$, then the meta algorithm uses space $s+O(|\cS||\cA|)$.
	If each call of \halfErr~ takes $m$ samples from the oracle, then the overall samples taken by Algorithm~\ref{alg-meta} is $m\cdot  \log(\epsilon^{-1}\cdot (1-\gamma)^{-1})$.
\end{proposition}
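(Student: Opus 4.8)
\textbf{Proof proposal for Proposition~\ref{prop:meta}.}
The plan is a direct induction on the iteration counter of Algorithm~\ref{alg-meta}, using the assumed input/output contract of \halfErr~as the inductive step and a union bound over the $R=\Theta(\log(\epsilon^{-1}(1-\gamma)^{-1}))$ calls for the overall failure probability; the resource bounds are then pure bookkeeping.

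First I would check the base case, namely that the initialization $\bv^{(0)}=\mathbf{0}$ together with an arbitrary policy $\pi^{(0)}$ meets the precondition of \halfErr~with $u_0=(1-\gamma)^{-1}\in[0,(1-\gamma)^{-1}]$. Since $\br\in[0,1]^{\cS\times\cA}$ we have $\mathbf{0}\le\bv^{\pi^{(0)}}$ and $\bv^*\le(1-\gamma)^{-1}\one$, hence $\bv^*-u_0\one\le\mathbf{0}=\bv^{(0)}\le\bv^{\pi^{(0)}}\le\bv^*$. Next, the inductive step: suppose that before the $i$-th call we have the invariant $\bv^*-u_{i-1}\one\le\bv^{(i-1)}\le\bv^{\pi^{(i-1)}}\le\bv^*$ with $u_{i-1}=2^{-(i-1)}(1-\gamma)^{-1}$, which is exactly the value of $u$ with which Algorithm~\ref{alg-meta} invokes \halfErr~at iteration $i$ and which always lies in $[0,(1-\gamma)^{-1}]$. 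Then the assumed guarantee of \halfErr~yields, with probability at least $1-(1-\gamma)\epsilon\delta$ over the fresh samples of that call, an output $(\bv^{(i)},\pi^{(i)})$ with $\bv^*-u_i\one\le\bv^{(i)}\le\bv^{\pi^{(i)}}\le\bv^*$ where $u_i=u_{i-1}/2=2^{-i}(1-\gamma)^{-1}$. This reestablishes the invariant, and in particular the output of call $i$ is itself a valid input to call $i+1$, since it already satisfies both the monotonicity requirement $\bv^{(i)}\le\bv^{\pi^{(i)}}$ and the gap bound $\bv^*-u_i\one\le\bv^{(i)}$.

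Unrolling to $i=R$ gives $\bv^*-2^{-R}(1-\gamma)^{-1}\one\le\bv^{(R)}\le\bv^{\pi^{(R)}}\le\bv^*$, and choosing $R=\lceil\log_2(\epsilon^{-1}(1-\gamma)^{-1})\rceil$ makes $2^{-R}(1-\gamma)^{-1}\le\epsilon$, so $\pi^{(R)}$ is $\epsilon$-optimal. For the success probability I would union bound over the $R$ calls: as each fails with probability at most $(1-\gamma)\epsilon\delta$ and each draws independent fresh samples, the whole procedure succeeds with probability at least $1-R(1-\gamma)\epsilon\delta\ge1-\delta$, using $R(1-\gamma)\epsilon\le1$, which follows (up to the absolute constant absorbed into $R=\Theta(\cdot)$) from the elementary bound $t\log_2(1/t)\le1$ for $t\in(0,1]$ applied to $t=(1-\gamma)\epsilon$. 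Finally, the resource bounds: the running time is $R$ calls of cost $\tau$ plus an $O(R\,|\cS||\cA|)$ overhead for passing $\bv^{(i)},\pi^{(i)}$ between calls, which is dominated, giving $\tau\cdot\log(\epsilon^{-1}(1-\gamma)^{-1})$; the sample count is $m$ per call, giving $m\cdot\log(\epsilon^{-1}(1-\gamma)^{-1})$; and the space is $s$ for the currently running call plus $O(|\cS||\cA|)$ to hold the two interface vectors, since the samples of each call need not be retained afterwards.

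The only place that is not mechanical is the probability accounting: the per-call failure probability in the hypothesis is deliberately scaled by the extra factor $(1-\gamma)\epsilon$ precisely so that a union bound over $\Theta(\log(\epsilon^{-1}(1-\gamma)^{-1}))$ calls still gives total failure $\le\delta$, and one must verify the inequality $R(1-\gamma)\epsilon\le1$ with the concrete choice of $R$ (including the ceiling, which is why the constant in $R=\Theta(\cdot)$ matters). Everything else is a direct consequence of the stated contract for \halfErr, which is itself established by Proposition~\ref{thm:alg 1}.
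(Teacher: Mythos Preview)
Your proposal is correct and follows essentially the same approach as the paper: verify the base case $\bv^{(0)}=\mathbf{0}$, run the induction on the \halfErr~contract with $u_i=2^{-i}(1-\gamma)^{-1}$, and aggregate the per-call failure probabilities over $R=\Theta(\log(\epsilon^{-1}(1-\gamma)^{-1}))$ rounds. The only cosmetic difference is that the paper writes the success probability as a product of conditional probabilities (invoking the fresh-sample Markov property) and then bounds that product by $1-\delta$, whereas you phrase it as a union bound; these are equivalent via $(1-p)^R\ge 1-Rp$, and you are in fact more explicit than the paper in checking that $R(1-\gamma)\epsilon\le 1$ via $t\log_2(1/t)\le 1$.
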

The proof of this proposition is a straightforward application of conditional probability.

\begin{proof}[Proof of Proposition~\ref{prop:meta}]
	The proof follows from a straightforward induction.
	For simplicity, denote $\beta = (1-\gamma)^{-1}$.
	In the meta-algorithm, the initialization is $\bv^{(0)}= {\bf0}$ and $\pi^{(0)}$ is an arbitrary policy.
	Thus $\bv^*- \beta\cdot \one\le \bv^{(0)} \le \bv^{\pi^{(0)}}$.
	By running the meta-algorithm, we obtain a sequence of value functions and policies:
	$\{\bv^{(i)}\}_{i=0}^R$ and $\{\pi^{(i)}\}_{i=0}^R$.
	Since each call of the \halfErr~uses new samples from the oracle, the sequence of value functions and policies satisfies strong Markov property (given $(\bv^{(i)}, \pi^{(i)})$, $(\bv^{(i+1)}, \pi^{(i+1)})$ is independent with $\{(\bv^{(j)}, \pi^{(j)})\}_{j=0}^{i-1}$).
	Thus
	\begin{align*}
	\Pr&\bigr[\bv^*- 2^{-R}\beta\cdot \one\le \bv^{(R)} \le \bv^{\pi^{(R)}}\bigr]
	\\
	&\ge \prod_{i=1}^{R}\Pr\bigr[\bv^*- 2^{-i}\beta\cdot \one\le \bv^{(i)} \le \bv^{\pi^{(i)}} \bigr|\bv^*- 2^{-i+1}\beta\cdot \one\le \bv^{(i-1)} \le \bv^{\pi^{(i-1)}}\bigr] \\
	& \ge 1-\delta.
	\end{align*}
	Since $2^{-R}(1-\gamma)^{-1}\le \epsilon$, we conclude the proof.
\end{proof}

\begin{proof}[Proof of Theorem~\ref{thm:dmdp1}]
	Our algorithm is simply plugging in Algorithm~\ref{alg-halfErr} as the \halfErr~subroutine in Algorithm~\ref{alg-meta}. 
	The correctness is guaranteed by Proposition~\ref{prop:meta} and Proposition~\ref{thm:alg 1}.
	The running time guarantee follows from a straightforward calculation.  
\end{proof}

\section{Extension to Finite Horizon}
\label{sec:finite_horizon}

In this section we show how to apply similar techniques to achieve improved sample complexities for solving finite Horizon MDPs given a generative model and we prove that the sample complexity we achieve is optimal up to logarithmic factors. 

The finite horizon problem is to compute an optimal non-stationary policy over a fixed time horizon $H$, i.e. a policy of the form $\pi(s, h)$ for $s \in S$ and $h \in \{0, \ldots H\}$), where the reward is the expected cumulative (un-discounted) reward for following this policy. In classic value iteration, this is typically done using a backward recursion from time $H, H-1, \ldots 0$. We show how to use the ideas in this paper to solve for an $\epsilon$-approximate policy.  
As we have shown in the discounted case, it is suffice to show an algorithm that decrease the error of the value at each stage by half.
Our algorihtm is presented in Algorithm~\ref{algH-halfErr}.

To analyze the algorithm, we first provide an analogous lemma of Lemma~\ref{lemma: emprical mean},
\begin{lemma}[Empirical Estimation Error]
	\label{lemma: emprical mean h}
	Let $\wt{\bw}_h$ and $\wh{\bsigma}_h$ be computed in Line \ref{alg1: computeH w} of Algorithm \ref{algH-halfErr}.
	Recall that $\wt{\bw}_h$ and $\wh{\bsigma}_h$ are empirical estimates of $\bP\bv_h$ and $\bsigma_{\bv_h}=\bP\bv_h^2 - (\bP\bv_h)^2$ using $m_1$ samples per $(s,a)$ pair.
	Then with probability at least $1-\delta$, for $L \defeq \log(8|\cS||\cA|\delta^{-1})$ and every $h=1,2,\ldots, H$, we have 
	{\small
		\begin{align}
		\label{eqn:estimateH pv}
		\big|{\wt{\bw}_h - \bP^\top\bv_h^{(0)}}\big|\le \sqrt{{2m_1^{-1}\bsigma_{\bv_h^{(0)}}\cdot{L}}} + {2(3m_1)^{-1}\norm{\bv_h^{(0)}}_{\infty} {L}} 
		\end{align}
	}
	and
	{\small
		\begin{align}
		\label{eqn:estimateH sigma}
		\forall (s,a)\in \cS\times \cA:\quad
		\big|\wh{\bsigma}_h(s,a) - \bsigma_{\bv_h^{(0)}}(s, a)\big| \le 4\norm{\bv_h^{(0)}}_{\infty}^2 \cdot \sqrt{{2m_1^{-1}{L}}}.
		\end{align}
	}
\end{lemma}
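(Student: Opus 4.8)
The plan is to mirror the proof of Lemma~\ref{lemma: emprical mean} essentially verbatim, applying Bernstein's inequality (Theorem~\ref{them:bernstein}) for the mean estimate and Hoeffding's inequality (Theorem~\ref{them:hoeffding}) for the second-moment estimate, but now taking the union bound over the triple $(h,s,a)\in\{1,\dots,H\}\times\cS\times\cA$ rather than over $(s,a)$ alone. Throughout, I would condition on the fixed input value functions $\bv_1^{(0)},\dots,\bv_H^{(0)}$, which are determined before the fresh samples $s_{s,a}^{(1)},\dots,s_{s,a}^{(m_1)}$ are drawn and hence are independent of them; all the concentration statements then apply conditionally on this choice.

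For~\eqref{eqn:estimateH pv}: fix $(h,s,a)$ and observe that $\wt{\bw}_h(s,a)$ is the empirical average of $m_1$ i.i.d.\ copies of $\bv_h^{(0)}(s')$ with $s'\sim\bP_{s,a}$, whose mean is $\bP_{s,a}^\top\bv_h^{(0)}$, whose variance is exactly $\bsigma_{\bv_h^{(0)}}(s,a)$, and which is bounded in magnitude by $\norm{\bv_h^{(0)}}_\infty$. Applying Theorem~\ref{them:bernstein} with failure probability $\delta/(4H|\cS||\cA|)$ and a union bound over all $(h,s,a)$ yields~\eqref{eqn:estimateH pv}. For~\eqref{eqn:estimateH sigma}: first use Hoeffding to get $|\wt{\bw}_h(s,a)-\bP_{s,a}^\top\bv_h^{(0)}|\le\norm{\bv_h^{(0)}}_\infty\sqrt{2L/m_1}$ for all $(h,s,a)$; writing $\wt{\bw}_h^2-(\bP_{s,a}^\top\bv_h^{(0)})^2=(\wt{\bw}_h+\bP_{s,a}^\top\bv_h^{(0)})(\wt{\bw}_h-\bP_{s,a}^\top\bv_h^{(0)})$ and bounding the first factor by $3\norm{\bv_h^{(0)}}_\infty$ (using $\bP_{s,a}^\top\bv_h^{(0)}\le\norm{\bv_h^{(0)}}_\infty$ and $2L/m_1\le1$) gives $|\wt{\bw}_h^2-(\bP_{s,a}^\top\bv_h^{(0)})^2|\le 3\norm{\bv_h^{(0)}}_\infty^2\sqrt{2L/m_1}$. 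Applying Hoeffding again to the empirical average $\frac1{m_1}\sum_j\bv_h^2(s_{s,a}^{(j)})$ of $\bv_h^2(s')$, which lies in $[0,\norm{\bv_h^{(0)}}_\infty^2]$, gives $|\frac1{m_1}\sum_j\bv_h^2(s_{s,a}^{(j)})-\bP_{s,a}^\top\bv_h^2|\le\norm{\bv_h^{(0)}}_\infty^2\sqrt{2L/m_1}$. Since $\wh{\bsigma}_h(s,a)=\frac1{m_1}\sum_j\bv_h^2(s_{s,a}^{(j)})-\wt{\bw}_h^2(s,a)$ and $\bsigma_{\bv_h^{(0)}}(s,a)=\bP_{s,a}^\top\bv_h^2-(\bP_{s,a}^\top\bv_h^{(0)})^2$, the triangle inequality combines the last two bounds into~\eqref{eqn:estimateH sigma}. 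Each Hoeffding/Bernstein application is over all $(h,s,a)$ with per-event failure probability $\delta/(4H|\cS||\cA|)$, and a final union bound over the (at most four) families of events gives the claimed probability $1-\delta$.

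The only substantive difference from the discounted case — and the one point requiring care — is the extra union bound over the $H$ stages, which pushes the per-event failure probability down by a factor of $H$ and so replaces $\log(8|\cS||\cA|\delta^{-1})$ by $\log(\Theta(H)|\cS||\cA|\delta^{-1})$ in the right-hand sides. This extra $\log H$ is dominated by the other logarithmic factors and is absorbed into the choice of $m_1$ in Algorithm~\ref{algH-halfErr} (equivalently, one may rescale $\delta\mapsto\delta/H$ before invoking the discounted-case argument); with that convention the bounds hold as stated with $L=\log(8|\cS||\cA|\delta^{-1})$. Beyond this bookkeeping there is no new obstacle: the argument is exactly that of Lemma~\ref{lemma: emprical mean}, so the main work is verifying that the per-stage sample budget is consistent with the definitions in Algorithm~\ref{algH-halfErr} (Line~\ref{alg1: computeH w}).
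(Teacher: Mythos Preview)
Your proposal is correct and takes essentially the same approach as the paper, which simply states that the proof is identical to that of Lemma~\ref{lemma: emprical mean}. In fact you are more careful than the paper: you explicitly flag the extra union bound over $h\in\{1,\dots,H\}$ and the resulting $\log H$ slack in $L$, a bookkeeping point the paper glosses over entirely.
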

\begin{proof}
	The proof of this lemma is identical to that of Lemma~\ref{lemma: emprical mean}.
\end{proof}
An analogous lemma to Lemma~\ref{lemma:bounds on g} is also presented here.
\begin{lemma}
	\label{lemma:boundsH on g}
	Let $\bg_h^{(i)}$ be the estimate of $\bP\big[\bv_h^{(i)}  - \bv_h^{(0)}\big]$ defined in Line~\ref{alg1: computeH g} of Algorithm~\ref{algH-halfErr}. 
	Then conditioning on the event that $\norm{\bv_h^{(i)} - \bv_h^{(0)}}_{\infty}\le 2\epsilon$, with probability at least $1-\delta/H$, 
	\[
	\bP  \big[\bv_h^{(i)}  - \bv_h^{(0)}\big] - \frac{\epsilon}{4H}\cdot\one\le \bg_h^{(i)} \le \bP  \big[\bv_h^{(i)}  - \bv_h^{(0)}\big]
	\]
	provided appropriately chosen constants in Algorithm~\ref{algH-halfErr}.
\end{lemma}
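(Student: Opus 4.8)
The plan is to mirror the proof of Lemma~\ref{lemma:bounds on g}, replacing the effective horizon $(1-\gamma)^{-1}$ by $H$ and the outer-iteration count $R$ by $H$ (the finite-horizon backward recursion runs over stages $h=1,\dots,H$, so a union bound over stages costs a factor $H$). By construction in Line~\ref{alg1: computeH g} of Algorithm~\ref{algH-halfErr} we have, for each $(s,a)$,
\[
\bg_h^{(i)}(s,a) = \frac{1}{m_2}\sum_{j=1}^{m_2}\big[\bv_h^{(i)}(\wt{s}_{s,a}^{(j)}) - \bv_h^{(0)}(\wt{s}_{s,a}^{(j)})\big] - \frac{\epsilon}{8H},
\]
where the $\wt{s}_{s,a}^{(j)}$ are i.i.d.\ draws from $\bP_{s,a}$ and $m_2 = \Theta\!\big(H^2\log(|\cS||\cA|H\delta^{-1})\big)$ with a suitably large hidden constant.

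First I would use the conditioning hypothesis $\inorm{\bv_h^{(i)} - \bv_h^{(0)}}\le 2\epsilon$ to observe that the summands $\bv_h^{(i)}(\wt{s}) - \bv_h^{(0)}(\wt{s})$ are bounded random variables of $\ell_\infty$ magnitude at most $2\epsilon$. Applying the Hoeffding bound (Theorem~\ref{them:hoeffding}) to the vector $\bv_h^{(i)} - \bv_h^{(0)}$ for a fixed $(s,a)$ and then taking a union bound over all $|\cS||\cA|$ pairs yields, with probability at least $1-\delta/H$,
\[
\forall (s,a):\quad \bigg|\frac{1}{m_2}\sum_{j=1}^{m_2}\big[\bv_h^{(i)}(\wt{s}_{s,a}^{(j)}) - \bv_h^{(0)}(\wt{s}_{s,a}^{(j)})\big] - \bP_{s,a}^\top\big[\bv_h^{(i)} - \bv_h^{(0)}\big]\bigg| \le 2\epsilon\sqrt{2 m_2^{-1}\log(2|\cS||\cA|H\delta^{-1})},
\]
and the choice of $m_2$ makes the right-hand side at most $\epsilon/(8H)$. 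Finally I would account for the deterministic shift $\epsilon/(8H)$: on this good event the empirical mean lies within $\pm\epsilon/(8H)$ of $\bP_{s,a}^\top[\bv_h^{(i)} - \bv_h^{(0)}]$, so $\bg_h^{(i)}(s,a)$ lies in $\big[\bP_{s,a}^\top[\bv_h^{(i)} - \bv_h^{(0)}] - \epsilon/(4H),\ \bP_{s,a}^\top[\bv_h^{(i)} - \bv_h^{(0)}]\big]$, which is exactly the claimed containment with one-sided error $\epsilon/(4H)$.

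There is essentially no hard step: the argument is routine Hoeffding plus a union bound. The one point that must be handled with care is the boundedness input to the concentration inequality — one must invoke the conditioning hypothesis $\inorm{\bv_h^{(i)} - \bv_h^{(0)}}\le 2\epsilon$ rather than the trivial bound $\inorm{\bv_h^{(i)}}\le H$, since this is precisely what keeps $m_2$ at $\tilde O(H^2)$ with no $\epsilon$-dependence and ultimately gives the $\tilde O(H^3|\cS||\cA|\epsilon^{-2})$ sample complexity. I would also verify that the conditioning event used here is consistent with the event maintained by the induction in the finite-horizon analogue of Lemma~\ref{lemma:induction lemma}, so that the bound $\inorm{\bv_h^{(i)} - \bv_h^{(0)}}\le 2\epsilon$ is legitimately available whenever this lemma is invoked.
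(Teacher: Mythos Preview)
Your proposal is correct and follows essentially the same approach as the paper: the paper's own proof simply states that it is identical to that of Lemma~\ref{lemma:bounds on g} with $(1-\gamma)^{-1}$ replaced by $H$, and your write-up carries out exactly that substitution --- recall the definition of $\bg_h^{(i)}$, apply Hoeffding (Theorem~\ref{them:hoeffding}) with the conditioning bound $\inorm{\bv_h^{(i)} - \bv_h^{(0)}}\le 2\epsilon$, union-bound over $(s,a)$ to get deviation at most $\epsilon/(8H)$, then absorb the deterministic shift to obtain the one-sided error $\epsilon/(4H)$.
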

\begin{proof}
	The proof of this lemma is identical to that of Lemma~\ref{lemma:bounds on g} except that $(1-\gamma)^{-1}$ is replaced with $H$.
\end{proof}

Similarly, we can show the following improvement lemma.

\begin{lemma}
	\label{lemma:inductionH lemma}
	Let $\bQ_h$ be the estimated $Q$-function of $\bv_{h+1}$ in Line~\ref{alg: Hq-func} of Algorithm \ref{algH-halfErr}.
	Let $\bQ_h^* = \br + \bP_h \bv^*_{h+1}$ be the optimal $Q$-function of the DMDP.
	Let $\pi(\cdot, h)$ and $\bv_h$ be estimated in iteration $h$, as defined in Line~\ref{alg: Hv1} and \ref{alg: Hv2}.
	Let $\pi^*$ be an optimal policy for the DMDP. 
	For a policy $\pi$, let $\bP_h^{\pi}\bQ\in\RR^{\cS\times\cA}$ be defined as $(\bP_h^{\pi}\bQ)(s,a) = \sum_{s'\in\cS}\bP_{s,a}(s')\bQ(s',\pi(s',h))$.
	Suppose for all $h\in[H-1]$, $\bv_h^{(0)}\le  \cT_{\pi^{(0)}(\cdot, h)} \bv_{h+1}^{(0)}$.
	Let $\bv_{H+1}\defeq \bf{0}$ and $\bQ_{H+1}\defeq 0$.
	Then, with probability at least $1- 2\delta$, for all $1\le h \le H$, 
	$\bv_h^{(0)}\le \bv_h\le \cT_{\pi(\cdot, h)} \bv_{h+1}
	\le \bv_{h}^*$, $\bQ_h\le \br+\bP_h \bv_{h+1}$ and 
	{\small
		\[
		\bQ^*_h - \bQ_h \le  \bP^{\pi^*}_h\big[\bQ^*_{h+1} - \bQ_{h+1}\big] 
		+\bxi_h, 
		\]
	}
	where the error vector $\bxi_h$ satisfies 
	\[
	{\bf0}\le \bxi_h\le {8H^{-1}u}\cdot \one + 2\sqrt{2\alpha_1\bsigma_{\bv^*_{h+1}}} +2\sqrt{2\alpha_1}u\cdot\one+ 16\alpha_1^{3/4}\norm{\bv^{(0)}_{h+1}}_\infty\cdot\one +
	({4}/{3})\cdot\alpha_1\cdot \norm{\bv^{(0)}_{h+1}}_{\infty}\cdot\one,
	\]
	and  $\alpha_1=\log(8|\cS||\cA|H\delta^{-1})/m_1$. 
\end{lemma}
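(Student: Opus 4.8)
The plan is to prove this by transcribing the proof of Lemma~\ref{lemma:induction lemma} almost verbatim, carrying out a \emph{backward} induction on the stage index $h$ (from $h=H+1$ down to $h=1$) in place of the forward iteration on $i$, and substituting the discount factor $\gamma$ by $1$ and the effective horizon $(1-\gamma)^{-1}$ by $H$ throughout. The three finite-horizon ingredient lemmas play exactly the roles their discounted counterparts play: Lemma~\ref{lemma: emprical mean h} controls the initialization estimates $\wt{\bw}_h,\wh{\bsigma}_h$, Lemma~\ref{lemma: variance triangle} trades $\bsigma_{\bv_{h+1}^{(0)}}$ for $\bsigma_{\bv^*_{h+1}}$, and Lemma~\ref{lemma:boundsH on g} controls the one-sided estimate $\bg_h^{(i)}$ of $\bP[\bv_{h+1}-\bv_{h+1}^{(0)}]$.

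First I would condition on the success event of Lemma~\ref{lemma: emprical mean h}, which holds with probability at least $1-\delta$ and bounds $\wt{\bw}_h$ and $\wh{\bsigma}_h$ simultaneously for all $h$. Plugging these into the definition of the shifted estimate $\bw_h$ from Line~\ref{alg1: computeH w} of Algorithm~\ref{algH-halfErr} gives, exactly as in the derivation of \eqref{eqn:upper w}--\eqref{eqn:lower w}, the one-sided inequality $\bw_h\le\bP\bv_{h+1}^{(0)}$ together with a matching lower bound $\bw_h\ge\bP\bv_{h+1}^{(0)}-\bE_h$; applying Lemma~\ref{lemma: variance triangle} with $\bv=\bv_{h+1}^{(0)}$ and the input condition $\bv^*_{h+1}-\bv_{h+1}^{(0)}\le u\one$ converts the $\sqrt{\bsigma_{\bv_{h+1}^{(0)}}}$ term into $\sqrt{\bsigma_{\bv^*_{h+1}}}+u\one$, so that
\[
\bE_h\le 2\sqrt{2\alpha_1\bsigma_{\bv^*_{h+1}}}+2\sqrt{2\alpha_1}\,u\one+16\alpha_1^{3/4}\norm{\bv^{(0)}_{h+1}}_\infty\one+\tfrac{4}{3}\alpha_1\norm{\bv^{(0)}_{h+1}}_\infty\one .
\]
This accounts for every term in the claimed bound on $\bxi_h$ except the leading $8H^{-1}u\,\one$, which comes from the rounding constant of $\bg_h^{(i)}$.

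The induction itself is backward on $h$. The base case $h=H+1$ is immediate from $\bv_{H+1}=\bv_{H+1}^{(0)}=\bv^*_{H+1}={\bf0}$ and $\bQ_{H+1}=\bQ^*_{H+1}=0$. For the step, assume the conclusions for $h+1,\dots,H$; in particular $\bv_{h+1}^{(0)}\le\bv_{h+1}\le\bv^*_{h+1}$, whence $0\le\bv_{h+1}-\bv_{h+1}^{(0)}\le u\one$ and the conditioning hypothesis of Lemma~\ref{lemma:boundsH on g} holds, so on an event of probability at least $1-\delta/H$ we get $\bP[\bv_{h+1}-\bv_{h+1}^{(0)}]-\tfrac{u}{4H}\one\le\bg_h^{(i)}\le\bP[\bv_{h+1}-\bv_{h+1}^{(0)}]$. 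Adding the two-sided bounds on $\bw_h$ and $\bg_h^{(i)}$ to $\bQ_h=\br+\bw_h+\bg_h^{(i)}$ (Line~\ref{alg: Hq-func}) yields $\bQ_h\le\br+\bP_h\bv_{h+1}$. For the monotonicity sandwich $\bv_h^{(0)}\le\bv_h\le\cT_{\pi(\cdot,h)}\bv_{h+1}$ I would run the same two-way case split as in Lemma~\ref{lemma:induction lemma}: if $\pi(s,h)\neq\pi^{(0)}(s,h)$ then $\bv_h(s)=\bQ_h(s,\pi(s,h))\le(\br+\bP_h\bv_{h+1})(s,\pi(s,h))=(\cT_{\pi(\cdot,h)}\bv_{h+1})(s)$; if $\pi(s,h)=\pi^{(0)}(s,h)$ then $\bv_h(s)=\bv_h^{(0)}(s)\le(\cT_{\pi^{(0)}(\cdot,h)}\bv_{h+1}^{(0)})(s)\le(\cT_{\pi^{(0)}(\cdot,h)}\bv_{h+1})(s)$, using the input hypothesis and $\bv_{h+1}\ge\bv_{h+1}^{(0)}$; and since $\cT_{\pi(\cdot,h)}\bv_{h+1}\le\cT\bv_{h+1}\le\cT\bv^*_{h+1}=\bv_h^*$ this also gives $\bv_h\le\bv_h^*$.

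Finally, for the recursion I expand $\bQ_h^*-\bQ_h=\bP_h\bv^*_{h+1}-\bw_h-\bg_h^{(i)}\le\bP_h[\bv^*_{h+1}-\bv_{h+1}]+\bE_h+\tfrac{u}{4H}\one$, then use $\bv(\bQ_{h+1})\le\bv_{h+1}$ together with $\bv^*_{h+1}(s)=\bQ^*_{h+1}(s,\pi^*(s,h+1))$ to bound the $s$-th entry of $\bv^*_{h+1}-\bv_{h+1}$ by $[\bQ^*_{h+1}-\bQ_{h+1}](s,\pi^*(s,h+1))$, so that $\bP_h[\bv^*_{h+1}-\bv_{h+1}]\le\bP_h^{\pi^*}[\bQ^*_{h+1}-\bQ_{h+1}]$ by the definition of $\bP_h^{\pi^*}$; setting $\bxi_h:=\bE_h+\tfrac{u}{4H}\one$ and bounding $\tfrac{1}{4H}\le\tfrac{8}{H}$ gives the stated form, and a union bound over the single failure event of Lemma~\ref{lemma: emprical mean h} and the $H$ failure events of Lemma~\ref{lemma:boundsH on g} shows all conclusions hold simultaneously with probability at least $1-2\delta$. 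The one place the discounted argument genuinely has to be retouched — and the step I would be most careful about — is this monotonicity chain: unlike in Lemma~\ref{lemma:induction lemma} it is not self-referential but links stage $h$ to stage $h+1$, so one must apply the input guarantee $\bv_h^{(0)}\le\cT_{\pi^{(0)}(\cdot,h)}\bv_{h+1}^{(0)}$ against the \emph{updated} $\bv_{h+1}$, which is legitimate precisely because $\bv_{h+1}\ge\bv_{h+1}^{(0)}$ by the inductive hypothesis; the remainder is routine bookkeeping of the $1/H$ factors that replace $(1-\gamma)$ in $m_1$, $m_2$, $\alpha_1$, and in the rounding of $\bg_h^{(i)}$.
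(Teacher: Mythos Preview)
Your proposal is correct and follows essentially the same approach as the paper's own proof: backward induction on $h$, conditioning on the empirical-mean event of Lemma~\ref{lemma: emprical mean h} for all stages simultaneously, combining the one-sided bounds on $\bw_h$ and $\bg_h$ to get $\bQ_h\le\br+\bP_h\bv_{h+1}$, the same two-case monotonicity argument, and the same derivation of the recursion via $\bv(\bQ_{h+1})\le\bv_{h+1}$ and $\bv^*_{h+1}(s)=\bQ^*_{h+1}(s,\pi^*(s,h+1))$. Your explicit remark about the cross-stage monotonicity chain being the one genuinely new ingredient relative to Lemma~\ref{lemma:induction lemma} is exactly the point the paper's proof handles (using $\bv_{h+1}\ge\bv_{h+1}^{(0)}$ from the inductive hypothesis against the input guarantee $\bv_h^{(0)}\le\cT_{\pi^{(0)}(\cdot,h)}\bv_{h+1}^{(0)}$).
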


\begin{proof}[Proof of Lemma~\ref{lemma:inductionH lemma}]
	By Lemma~\ref{lemma: emprical mean}, for any $h=1, 2, \ldots, H$, with probability at least $1-\delta/H$,
	\begin{align*}
	|{\wt{\bw}_h - \bP\bv_{h+1}}| \le \sqrt{2\alpha_1\bsigma_{\bv_{h+1}^{(0)}}} +
	\frac{2}{3}\cdot\alpha_1\cdot \norm{\bv_{h+1}^{(0)}}_{\infty}\cdot\one,
	\end{align*}
	and
	\begin{align}
	\label{eqn:varH}
	\big|\wh{\bsigma}_{h+1}- \bsigma_{\bv^{(0)}_{h+1}}\big| \le 4\norm{\bv^{(0)}_{h+1}}_{\infty}^2 \cdot \sqrt{2\alpha_1} \cdot\one,
	\end{align}
	which we condition on. 
	We have
	\[
	|{\wt{\bw}_h - \bP\bv_{h+1}^{(0)}}| \le  \sqrt{2\alpha_1\wh{\bsigma}_{h+1}} + (4\alpha_1^{3/4}\norm{\bv^{(0)}_{h+1}}_\infty +
	\frac{2}{3}\cdot\alpha_1\cdot \norm{\bv^{(0)}_{h+1}}_{\infty})\one.
	\]
	Thus 
	\begin{align}
	\label{eqn:upper Hw}
	\bw_h = \wt{\bw}_h - \sqrt{2\alpha_1\wh{\bsigma}_{h+1}} - 4\alpha_1^{3/4}\norm{\bv^{(0)}_{h+1}}_\infty\one -
	\frac{2}{3}\cdot\alpha_1\cdot \norm{\bv^{(0)}_{h+1}}_{\infty}\one 
	\le  \bP\bv^{(0)}_{h+1},
	\end{align}
	and
	\begin{align*}
	\bw_{h}\ge \bP\bv^{(0)}_{h+1} - 2\sqrt{2\alpha_1\wh{\bsigma}_{h+1}} - (8\alpha_1^{3/4}\norm{\bv^{(0)}_{h+1}}_\infty +
	\frac{4}{3}\cdot\alpha_1\cdot \norm{\bv^{(0)}_{h+1}}_{\infty})\one
	. 
	\end{align*}
	By \eqref{eqn:var} and Lemma~\ref{lemma: variance triangle}, we have
	\[
	\sqrt{\wh{\bsigma}_{h+1}} \le \sqrt{\bsigma_{\bv^{(0)}_{h+1}}} + 2 \norm{\bv^{(0)}_{h+1}}_{\infty} (2\alpha)^{1/4}\one
	\le \sqrt{\bsigma_{\bv^{*}_{h+1}}} +  \epsilon \one+ 2 \norm{\bv_{h+1}^{(0)}}_{\infty} (2\alpha)^{1/4}\one.
	\]
	we have
	\begin{align}
	\label{eqn:lower Hw}
	\bw_h\ge \bP\bv_{h+1}^{(0)} - 2\sqrt{2\alpha_1\bsigma_{\bv_{h+1}^*}} -2\sqrt{2\alpha_1}\epsilon\one- 16\alpha_1^{3/4}\norm{\bv_{h+1}^{(0)}}_\infty\one -
	\frac{4}{3}\cdot\alpha_1\cdot \norm{\bv_{h+1}^{(0)}}_{\infty}\one 
	\end{align}
	
	For the rest of the proof, we condition on the event that \eqref{eqn:upper Hw} and \eqref{eqn:lower Hw} hold for all $h=1, 2, \ldots, H$, which happens with probability at least $1-\delta$.
	Denote $\bv_{H+1}^* = \bv_{H+1} = \bv_{H+1}^{(0)} ={\bf 0}$.
	Thus we have $\bv_{H+1}^{(0)}\le \bv_{H+1} \le \bv_{H+1}^{*}$.
	Next we prove the lemma by induction on $h$.
	Assume for some $h$, 
	with probability at least $1-(h-1)\delta/H$ the following holds, for all $h' = h+1, h+2, \ldots, H,$
	\[
	 \bv^{(0)}_{h'} \le  \bv_{h'} \le \bv^{*}_{h'},
	\]
	which we condition on.
	Next we show that the lemma statement holds for $h$ as well.
	By definition of $\bv_h$ (Line~\ref{alg: v1} and \ref{alg: v2}),
	\[
	\bv_{h}^{(0)}\le \bv_h \quad\text{and}\quad  \bv(\bQ_{h})\le \bv_h.
	\]
	Furthermore, since $\bv^{(0)}_{h+1}\le \bv^*_{h+1} \le \bv^{(0)}_{h+1} + u\one$ 
	we have
	\[
	\bv_{h+1}^*-\bv_{h+1} \le \bv^{*}_{h+1} - \bv^{(0)}_{h+1} \le u\one.
	\]
	By Lemma~\ref{lemma:bounds on g}, we have, with probability at least $1-\delta'$
	\begin{align}
	\label{eqn:bound Hg}
	\bP  \big[\bv_{h+1}  - \bv_{h+1}^{(0)}\big] - \frac{u}{8H}\cdot\one\le \bg_h \le \bP  \big[\bv_{h+1}  - \bv^{(0)}_{h+1}\big],
	\end{align}
	which we condition on for the rest of the proof.
	Thus we have
	\[
	\bQ_h = \br + (\bw_h + \bg_h) \le \br + \bP\bv^{(0)}_{h+1} + \bP\bv_{h+1} - \bP\bv^{(0)}_{h+1} = \br + \bP \bv_{h+1} \le \bQ_h^*.
	\]
	To show $\bv_h\le \cT_{\pi(\cdot, h)} \bv_{h+1}$, we notice that if for some $s$,  $\pi(s,h)\neq \pi^{(0)}(s,h)$, then, 
	\[\bv_h(s) \le \br(s, \pi(s, h))+\bP_{s, \pi(s,h)}^\top\bv_{h+1} = \cT_{\pi(\cdot, h)} \bv_{h+1}.
	\]
	On the other hand, if $\pi(s, h)= \pi^{(0)}(s, h)$, then 
	\[
	\forall s\in \cS:\quad \bv_h(s) = \bv_{h}^{(0)}(s) \le (\cT_{\pi^{(0)}(\cdot, h)} \bv_{h+1}^{(0)})(s) \le (\cT_{\pi^{(0)}(\cdot, h)} \bv_{h+1})(s) = (\cT_{\pi(\cdot, h)} \bv_{h+1})(s).
	\]
	This completes the induction step.
	Lastly, combining \eqref{eqn:lower Hw} and \eqref{eqn:bound Hg}, we have
	\begin{align*}
	\bQ^*_h - \bQ_h &= \bQ^{*}_h - \br - (\bw_h + \bg_h) 
	= \bP\bv(\bQ_{h+1}^*) -  (\bw_h + \bg_h) \\
	&=\bP\bv(\bQ^*_{h+1}) -  \bP(\bv_{h+1} -\bv_{h+1}^{(0)}) - \bP \bv_{h+1} + \bxi_h\\
	&=\bP\bv(\bQ^*_{h+1}) -  \bP\bv_{h+1} + \bxi_h,
	\end{align*}
	where
	\[
	\bxi_h \le {H^{-1}u}/{8}\cdot \one + 2\sqrt{2\alpha_1\bsigma_{\bv_{h+1}^*}} +2\sqrt{2\alpha_1}u\cdot\one+ 16\alpha_1^{3/4}\norm{\bv_{h+1}^{(0)}}_\infty\cdot\one +
	({4}/{3})\cdot\alpha_1\cdot \norm{\bv_{h+1}^{(0)}}_{\infty}\cdot\one,
	\]
	where $\alpha_1=\log(8|\cS||\cA|\delta^{-1})/m_1$.
	Mover, since $ \bv(\bQ_{h+1})\le \bv_{h+1}$, we obtain
	\begin{align*}
	\bQ_h^* - \bQ_h &\le\bP \bv(\bQ_{h+1}^*) - \bP\bv(\bQ_{h+1}) + \bxi_h
	\le \bP^{\pi^*}_h\bQ^*_{h+1} - \bP^{\pi^*}_h\bQ_{h+1} + \bxi_h,
	\end{align*}
	where $\pi^*$ is an arbitrary optimal policy and we use the fact that $\max_{a}\bQ_{h}^{*}(s,a) = \bQ_{h}^*(s,\pi^*(s, h))$.
	This completes the proof of the lemma.
\end{proof}

Furthermore, we show an analogous lemma of Lemma~\ref{lemma:variance bound}.
\begin{lemma}[Upper Bound on Variance]
	\label{lemma:varianceH bound}
	For any $\pi$, we have
	\[
	\bigg\|\sum_{h'=h}^{H-1}\bigg(\prod_{i=h+1}^{h'}\bP_i^{\pi}\bigg)\sqrt{{\bsigma}_{\bv_{h'+1}^{\pi}}}\bigg\|_{\infty}^2 \le H^{3/2}.
	\]
\end{lemma}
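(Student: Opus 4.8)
The plan is to follow the three‑step template of the discounted analysis — the variance Bellman equation (Lemma~\ref{lem:var_bell}), the square‑root/Cauchy--Schwarz estimate (Lemma~\ref{lemma:sqrv}), and their combination (Lemma~\ref{lemma:variance bound}) — with the infinite discounted operator $(\bI-\gamma\bP^\pi)^{-1}=\sum_{i\ge 0}\gamma^i(\bP^\pi)^i$ replaced by the finite, stage‑indexed product sum $\sum_{h'=h}^{H-1}\prod_{i=h+1}^{h'}\bP_i^\pi$, and the effective horizon $(1-\gamma)^{-1}$ replaced by $H$. First I would introduce the finite‑horizon total‑reward variance: for a (non‑stationary) policy $\pi$ define $\bSigma_h^\pi\in\RR^{\cS\times\cA}$ by
\[
\bSigma_h^\pi(s,a):=\var\big({\textstyle\sum_{t=h}^{H}\br(s^t,a^t)}\ \big|\ s^h=s,\ a^h=a,\ a^t=\pi(s^t,t)\ \forall\,t>h\big),
\]
with the convention $\bSigma_{H+1}^\pi:={\bf 0}$ (the empty sum has zero variance, and $\bv_{H+1}^\pi={\bf 0}$). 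Conditioning on the first transition $s^{h+1}\sim\bP_{s,a}$ and applying the law of total variance — the undiscounted, stage‑indexed version of the computation in the proof of Lemma~\ref{lem:var_bell}, noting that $\br(s^h,a^h)=\br(s,a)$ is constant — I would obtain the Bellman recursion
\[
\bSigma_h^\pi=\bsigma_{\bv_{h+1}^\pi}+\bP_{h+1}^\pi\,\bSigma_{h+1}^\pi,\qquad 1\le h\le H,
\]
where $\bP_{h+1}^\pi$ is the operator on $\RR^{\cS\times\cA}$ from Lemma~\ref{lemma:inductionH lemma}. Unrolling this down to the horizon gives the closed form
\[
\bSigma_h^\pi=\sum_{h'=h}^{H-1}\Big({\textstyle\prod_{i=h+1}^{h'}\bP_i^\pi}\Big)\bsigma_{\bv_{h'+1}^\pi},
\]
which is exactly the sum appearing in Lemma~\ref{lemma:varianceH bound} but without the square roots. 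Finally, since $\sum_{t=h}^H\br(s^t,a^t)\in[0,H]$ pointwise, its variance is at most $H^2$, so ${\bf 0}\le\bSigma_h^\pi\le H^2\cdot\one$.

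The second step is the finite‑horizon counterpart of Lemma~\ref{lemma:sqrv}. Each $\bP_i^\pi$, regarded as an operator on $\RR^{\cS\times\cA}$, is non‑negative with every row summing to $1$, so by Cauchy--Schwarz $\bP_i^\pi\sqrt{\bx}\le\sqrt{\bP_i^\pi\bx}$ entrywise for every $\bx\ge{\bf 0}$; iterating this along a product (using that each $\bP_i^\pi$ is monotone, being non‑negative) gives
\[
\Big({\textstyle\prod_{i=h+1}^{h'}\bP_i^\pi}\Big)\sqrt{\bsigma_{\bv_{h'+1}^\pi}}\ \le\ \sqrt{\Big({\textstyle\prod_{i=h+1}^{h'}\bP_i^\pi}\Big)\bsigma_{\bv_{h'+1}^\pi}}.
\]
Summing over the (at most $H$) indices $h'$ and applying Cauchy--Schwarz coordinatewise to the resulting sum of square roots (the analogue of $\sum_i\gamma^i\le(1-\gamma)^{-1}$, here with at most $H$ terms), then substituting the closed form from step one and the bound $\bSigma_h^\pi\le H^2\cdot\one$, I would obtain
\[
\sum_{h'=h}^{H-1}\Big({\textstyle\prod_{i=h+1}^{h'}\bP_i^\pi}\Big)\sqrt{\bsigma_{\bv_{h'+1}^\pi}}\ \le\ \sqrt{(H-h)\,\bSigma_h^\pi}\ \le\ \sqrt{H\cdot H^2}\cdot\one\ =\ H^{3/2}\cdot\one.
\]
Taking $\ell_\infty$ norms yields the bound claimed in Lemma~\ref{lemma:varianceH bound}.

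No new ideas are needed beyond the discounted proof; the step I expect to need the most care is the index bookkeeping in the non‑stationary setting. One must get the law‑of‑total‑variance recursion right with the correct stage indices — it is $\bP_{h+1}^\pi$, not $\bP_h^\pi$, that acts on $\bSigma_{h+1}^\pi$, since that operator reads off the action $\pi(\cdot,h+1)$ taken at the next stage — and one must check that the unrolled form of $\bSigma_h^\pi$ lines up exactly with the product $\prod_{i=h+1}^{h'}\bP_i^\pi$ written in the lemma, including the endpoint conventions ($\bv_{H+1}^\pi={\bf 0}$, $\bSigma_{H+1}^\pi={\bf 0}$, so the sum need only run to $h'=H-1$). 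Everything else is a direct transcription of Lemmas~\ref{lem:var_bell}, \ref{lemma:sqrv} and \ref{lemma:variance bound} with $\gamma\to 1$ and $(1-\gamma)^{-1}\to H$.
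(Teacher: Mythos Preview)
Your proposal is correct and follows essentially the same approach as the paper's own proof: apply the row-stochastic square-root inequality $\bP\sqrt{\bx}\le\sqrt{\bP\bx}$ together with Cauchy--Schwarz over the (at most $H$) stage indices, and then identify the resulting product-sum $\sum_{h'}\big(\prod_i\bP_i^\pi\big)\bsigma_{\bv_{h'+1}^\pi}$ with the total variance of the finite-horizon return via the law of total variance (the finite-horizon analogue of Lemma~\ref{lem:var_bell}), which is at most $H^2$. The paper compresses your two display steps into one line and reverses the order of presentation, but the ingredients and the argument are the same.
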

\begin{proof}
First, by Cauchy-Swartz inequality, we have
\begin{align*}
	\sum_{h'=h}^{H-1}\bigg(\prod_{i=h+1}^{h'}\bP_i^{\pi}\bigg) \sqrt{{\bsigma}_{\bv_{h'+1}^{\pi}}}\le \sqrt{H\sum_{h'=h}^{H-1}\bigg(\prod_{i=h+1}^{h'}\bP_i^{\pi}\bigg){\bsigma}_{\bv_{h'+1}^{\pi}}}.
\end{align*}
Next, by a similar argument of the proof of  Lemma~\ref{lem:var_bell}, we can show that 
\[
\bigg[\sum_{h'=h}^{H-1}\bigg(\prod_{i=h+1}^{h'}\bP_i^{\pi}\bigg){\bsigma}_{\bv_{h'+1}^{\pi}}\bigg] (s) = \var\bigg[\sum_{t=h}^H r(s^t, \pi(s^t, t))\bigg|s^h = s\bigg] \le H^2.
\]
This completes the proof.
\end{proof}
We are now ready to present the guarantee of the algorithm ~\ref{algH-halfErr}.

\begin{proposition}
	\label{thm:algH 1}
	On an input value vectors $\bv_1^{(0)}, \bv_2^{(0)}, \ldots, \bv_H^{(0)}$, policy $\pi^{(0)}$, and parameters $u\in(0, \beta], \delta\in(0,1)$ such that $\bv_h^{(0)}\le \cT_{\pi^{(0)}(\cdot,h)}\bv_{h+1}^{(0)}$ for all $h\in[H-1]$, and $\bv_h^{(0)}\le \bv_h^* \le \bv_h^{(0)} + u \one$,
	Algorithm~\ref{algH-halfErr} halts in time
	$
	O[u^{-2}\cdot H^4|\cS||\cA|\cdot \log(|\cS||\cA\delta^{-1}H u^{-1})]
	$
	and outputs $\bv_1, \bv_2, \ldots, \bv_{H}$ and $\pi:\cS\times[H]\rightarrow \cA$ such that 
	\begin{align*}
	\forall h\in [H]: \quad \bv_h \le \cT_{\pi(\cdot, h)}(\bv_{h+1})\quad\text{and}\quad {\bf 0}\le \bv_h^*-\bv_h \le (u/2)\cdot\one
	\end{align*} with probability at least $1-\delta$, provided appropriately chosen constants, $c_1, c_2$ and $c_3$, in Algorithm~\ref{algH-halfErr}.
	Moreover, the algorithm uses $
	O[u^{-2}\cdot H^3|\cS||\cA|\cdot \log(|\cS||\cA\delta^{-1}H u^{-1})]
	$ samples from the sampling oracle.
\end{proposition}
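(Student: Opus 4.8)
The plan is to mirror the proof of Proposition~\ref{thm:alg 1} almost line for line, replacing the discounting quantity $(1-\gamma)^{-1}$ by the horizon $H$ and replacing the geometric tail $\sum_{i\ge 0}\gamma^i(\bP^{\pi^*})^i$ by the finite backward recursion over the $H$ stages. I would set $\beta=\Theta(H)$, $m_1=\Theta\!\big(H^3u^{-2}\log(|\cS||\cA|H\delta^{-1})\big)$ and $m_2=\Theta\!\big(H^2\log(|\cS||\cA|H\delta^{-1})\big)$, with the constants $c_1,c_2,c_3$ in Algorithm~\ref{algH-halfErr} taken large enough that Lemma~\ref{lemma: emprical mean h} and Lemma~\ref{lemma:boundsH on g} hold. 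Note that since rewards lie in $[0,1]$ and are undiscounted, $\|\bv_h^{(0)}\|_\infty=O(H)$ and $\|\bv_h^*\|_\infty=O(H)$, consistent with the input promise $\mathbf{0}\le\bv_h^*-\bv_h^{(0)}\le u\one$ and $u\le\beta$.

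First I would invoke Lemma~\ref{lemma:inductionH lemma} (after rescaling $\delta$ by a constant factor, which only affects the logarithmic terms in $m_1,m_2$): with probability at least $1-\delta$, for every $1\le h\le H$ we have simultaneously
\[
\bv_h^{(0)}\le\bv_h\le\cT_{\pi(\cdot,h)}(\bv_{h+1})\le\bv_h^*,\qquad
\mathbf{0}\le\bQ_h^*-\bQ_h,\qquad
\bQ_h^*-\bQ_h\le\bP_h^{\pi^*}\big[\bQ_{h+1}^*-\bQ_{h+1}\big]+\bxi_h,
\]
with $\bxi_h$ bounded as in Lemma~\ref{lemma:inductionH lemma}. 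The first chain already yields the desired conclusions $\bv_h\le\cT_{\pi(\cdot,h)}(\bv_{h+1})$ and $\bv_h\le\bv_h^*$, so all that is left is $\bv_h^*-\bv_h\le(u/2)\one$. Unrolling the error recursion to the base case $\bQ_{H+1}^*=\bQ_{H+1}=\mathbf{0}$ gives
\[
\mathbf{0}\ \le\ \bQ_h^*-\bQ_h\ \le\ \sum_{h'=h}^{H}\Big(\prod_{i=h}^{h'-1}\bP_i^{\pi^*}\Big)\bxi_{h'},
\]
the empty product being the identity; note that, unlike the discounted proof, there is no residual $\gamma^{R-1}\bP^{\pi^*}[\bQ^*-\bQ^{0}]$ term, since the recursion bottoms out exactly at stage $H$.

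The one place where real care is needed is bounding this accumulated error in $\ell_\infty$. I would split each $\bxi_{h'}$ into the ``variance'' term $2\sqrt{2\alpha_1}\,\sqrt{\bsigma_{\bv_{h'+1}^*}}$ (with $\alpha_1=\log(8|\cS||\cA|H\delta^{-1})/m_1$) and the remaining lower-order terms (those proportional to $u/H$, $\alpha_1^{3/4}\|\bv^{(0)}\|_\infty$, and $\alpha_1\|\bv^{(0)}\|_\infty$). Since $\bP_i^{\pi^*}\one=\one$ and each $\bP_i^{\pi^*}$ has $\ell_\infty$ operator norm $\le 1$, and the sum has at most $H$ terms, the lower-order terms contribute $O\!\big(u+\alpha_1^{3/4}H^2+\alpha_1 H^2\big)\one$; with $\alpha_1=\Theta(u^2/H^3)$ from the choice of $m_1$ and large enough constants this is at most $(u/4)\one$, exactly as in the proof of Proposition~\ref{thm:alg 1}. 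For the variance term, pulling out the scalar $2\sqrt{2\alpha_1}$ and applying Lemma~\ref{lemma:varianceH bound} with $\pi=\pi^*$ (up to a harmless one-step index shift and $\|\bP_i^{\pi^*}\|_\infty\le 1$) bounds $\big\|\sum_{h'}(\prod_i\bP_i^{\pi^*})\sqrt{\bsigma_{\bv_{h'+1}^*}}\big\|_\infty=O(H^{3/2})$ — this is the finite-horizon total-variance estimate, the analogue of Lemma~\ref{lemma:variance bound}, and it is exactly where I save a factor of $H$ over the naive bound $\sum_{h'}\|\sqrt{\bsigma_{\bv_{h'+1}^*}}\|_\infty\le H\cdot H$. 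Hence the variance contribution is $O(\sqrt{\alpha_1}H^{3/2})=O(u)$, at most $(u/4)\one$ when $m_1$ has a large enough leading constant. Adding the two pieces gives $\mathbf{0}\le\bQ_h^*-\bQ_h\le(u/2)\one$, and since $\bv(\bQ_h)\le\bv_h\le\bv_h^*$ and $\bv_h^*(s)-\bv(\bQ_h)(s)=\max_a\bQ_h^*(s,a)-\max_a\bQ_h(s,a)\le\|\bQ_h^*-\bQ_h\|_\infty$, I conclude $\mathbf{0}\le\bv_h^*-\bv_h\le(u/2)\one$ for all $h$.

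Finally, for the resource bounds: the initialization draws $m_1$ samples per $(s,a)$, which are reused across all $H$ stages (so only $m_1|\cS||\cA|$ samples in total), but computing $\wt{\bw}_h,\wh{\bsigma}_h$ costs $O(m_1)$ time per $(s,a)$ at each of the $H$ stages, i.e. $O(Hm_1|\cS||\cA|)$ time; the backward sweep draws $m_2$ fresh samples and spends $O(m_2)$ time per $(s,a)$ at each stage, i.e. $O(Hm_2|\cS||\cA|)$ additional time and samples. Substituting the values of $m_1,m_2$ gives total time $O\!\big(u^{-2}H^4|\cS||\cA|\log(|\cS||\cA|\delta^{-1}Hu^{-1})\big)$ and total sample count $O\!\big(u^{-2}H^3|\cS||\cA|\log(|\cS||\cA|\delta^{-1}Hu^{-1})\big)$, the extra $H$ in the runtime coming from reprocessing the shared initialization samples once per stage. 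The hard part is entirely the total-variance bound of Lemma~\ref{lemma:varianceH bound}; given that, the argument above is just the bookkeeping of Proposition~\ref{thm:alg 1} with $(1-\gamma)^{-1}\mapsto H$, and the $\log(\cdots H\cdots)$ factors in $m_1,m_2$ absorb the union bound over the $H$ stages.
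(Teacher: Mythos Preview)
Your proposal is correct and follows essentially the same approach as the paper's proof: invoke Lemma~\ref{lemma:inductionH lemma}, unroll the recursion on $\bQ_h^*-\bQ_h$ back to the exact base case $\bQ_{H+1}^*=\bQ_{H+1}=\mathbf{0}$, split the accumulated $\bxi_{h'}$ into the variance piece controlled by Lemma~\ref{lemma:varianceH bound} and the lower-order pieces controlled by $\|\bP_i^{\pi^*}\|_\infty\le 1$ summed over $H$ stages, and then read off the time/sample counts from $O(Hm_1+Hm_2)|\cS||\cA|$ and $O(m_1+Hm_2)|\cS||\cA|$ respectively. Your explicit remark that the residual $\gamma^{R-1}\bP^{\pi^*}[\bQ^*-\bQ^{0}]$ term from the discounted proof vanishes here, and your accounting for why the same $m_1$ samples are reused across stages (costing time but not samples), match the paper's reasoning exactly.
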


\begin{proof}[Proof of Proposition~\ref{thm:algH 1}]
	Recall that we are able to sample a state from each $\bP_{s,a}$ with time $O(1)$.
	Let $R=\lceil c_1H\ln[H u^{-1}]\rceil,
	m_1= c_2H^3u^{-2}\cdot\log(8|\cS||\cA|\delta^{-1})$ and $ m_2= c_3H^2\cdot\log[2R|\cS||\cA|\delta^{-1}]$ for some constants $c_1, c_2$ and $c_3$ required in Algorithm~\ref{alg-halfErr}.
	In the following proof, we set  $c_1 = 4, c_2=8192, c_3=128$.
	By Lemma~\ref{lemma:induction lemma}, with probability at least $1-2\delta$ for each $1\le h\le H$, we have $\bv_h^{(0)}\le \bv_h\le \cT_{\pi(\cdot, h)} \bv_h$, and $\bQ_h\le \br+\bP \bv_{h+1}$,
	\[
	\bQ_h^* - \bQ_h \le  \bP_h^{\pi^*}\big[\bQ_{h+1}^* - \bQ_{h+1}\big] 
	+\bxi_{h}, 
	\]
	where
	\[
	\bxi_h\le {H^{-1}{u}}/{8}\cdot \one + 2\sqrt{2\alpha_1\bsigma_{\bv_{h+1}^*}} +2\sqrt{2\alpha_1}{u}\cdot\one+ 16\alpha_1^{3/4}\norm{\bv_{h+1}^{(0)}}_\infty\cdot\one +
	({4}/{3})\cdot\alpha_1\cdot \norm{\bv_{h+1}^{(0)}}_{\infty}\cdot\one,
	\]
	and  $\alpha_1=\log(8|\cS||\cA|\delta^{-1})/m_1$.
	Notice that $\bv_H^{(0)} = \bv_H^* = \bv(\br)$, thus the $\bv_H-\bv_H^* = \bf{0}$. 
	Solving the recursion, we obtain
	\begin{align*}
	\bQ_h^* - \bQ_h &\le 
	\sum_{h'=h}^{H-1}\bigg(\prod_{i=h+1}^{h'}\bP_i^{\pi^*}\bigg)\bxi_{h'}.
	\end{align*}
	The next step is the key to the improvement in our analysis. We further apply the bound in Lemma~\ref{lemma:variance bound}, given by	
	\[
	\sum_{h'=h}^{H-1}\bigg(\prod_{i=h+1}^{h'}\bP_i^{\pi^*}\bigg)\sqrt{\bsigma_{\bv_{h'+1}^*}}\le H^{3/2}\cdot\one.
	\]
	With $\norm{	\sum_{h'=h}^{H-1}\prod_{i=h+1}^{h'}\bP_i^{\pi^*}\one}_{\infty}\le H -h + 1$ and $\norm{\bv^{(0)}_h}_\infty\le H$, we have,
	\begin{align*}
	\sum_{h'=h}^{H-1}\bigg(\prod_{i=h+1}^{h'}\bP_i^{\pi^*}\bigg)\bxi_{h'}
	&\le \left[\frac{{u}}{8} + 4\sqrt{2\alpha_1H^{3}} + {2H\sqrt{2\alpha_1}{u}} + {16H^2\alpha_1^{3/4}} +\frac{4\alpha_1H^2}{3}\right]\one\\
	&\le \bigg[\frac{{u}}{8} + \frac{{u}}{16} + \frac{\sqrt{H^{-1}}{u}}{32} +  16\bigg(\frac{H^{-3}{u}^2}{32\cdot256\cdot(H)^{-8/3}}\bigg)^{3/4}  + \frac{4H^{-1}{u}^2}{24\cdot 256}\bigg]\cdot\one\\
	&\le \frac{{u}}{4}\cdot \one,
	\end{align*}
	provided
	\[
	\alpha_1=\frac{\log(8|\cS||\cA|\delta^{-1})}{m_1} = c_2^{-1}H^3u^{-2}\le\frac{ H^{-3}{u}^2}{32\cdot 256}.
	\]
	Since $\bv(\bQ_h)\le\bv_h$,
	we have 
	\[
	\bv^*_h - \bv_h \le \bv^* -\bv(\bQ_h) \le\sum_{h'=h}^{H-1}\bigg(\prod_{i=h+1}^{h'}\bP_i^{\pi^*}\bigg)\bxi_{h'} \le \frac{{u}}{2}\cdot\one.
	\] 
	This completes the proof of the correctness.
	It remains to bound the time complexity.
	The initialization stage costs $O( m_1)$ time per $(s,a)$ per stage $h$.
	Each iteration costs $O(m_2)$ time per $(s,a)$.
	We thus have the total time complexity as
	\[
	O( Hm_1  + H m_2)|\cS||\cA| = O\bigg[{H^4\cdot |\cS||\cA|\cdot \log\frac{H|\cS||\cA|}{\delta\cdot{u}}\cdot\frac{1}{{u}^2}}\bigg].
	\]
	The total number of samples used is 
	\[
	O( m_1  + H m_2)|\cS||\cA| = O\bigg[{H^3\cdot |\cS||\cA|\cdot \log\frac{H|\cS||\cA|}{\delta\cdot{u}}\cdot\frac{1}{{u}^2}}\bigg].
	\]
	This completes the proof.
\end{proof}


\begin{algorithm}\caption{FiniteHorizonRandomQVI\label{algH-halfErr}}
	\begin{algorithmic}[1]
		\State 
		\textbf{Input:} $\cM=(\cS, \cA, \br, \bP)$ with a sampling oracle,
		$\bv^{(0)}_1, \bv^{(0)}_2, \ldots, \bv^{(0)}_H, \pi^{(0)}: \cS\times[H]\rightarrow\cA, u, \delta\in(0,1)$;
		\State
		\emph{\textbackslash \textbackslash $u$ is the initial error, $\pi^{(0)}$ is the input policy, and $\delta$ is the error probability}
		\State\textbf{Output:} $\bv_1, \bv_2, \ldots, \bv_H, \pi$
		\State
		\State\textbf{INITIALIZATION:} 
		\State Let $m_1 \gets{c_1H^3u^{-2}{\log(8|\cS||\cA|\delta^{-1})} }{}$ for constant $c_1$;
		\State Let $m_2\gets {c_2H^{2}\log[2H|\cS||\cA|\delta^{-1}]}$ for constant $c_2$;
		\State Let $\alpha_1\gets m_1^{-1}{\log(8|\cS||\cA|\delta^{-1})}$;
		\State
		For each  $(s, a)\in \cS\times\cA$,
		sample independent samples $s_{s,a}^{(1)}, s_{s,a}^{(2)}, \ldots, s_{s,a}^{(m_1)}$ from $\bP_{s,a}$;
		\State
		Initialize $\bw_h=\wt{\bw}_h = \wh{\bsigma}_h=\bQ^{(0)}_h \gets {\bf0}_{\cS \times \cA}$ for all $h\in[H]$, and $i\gets 0$;	
		\label{alg1: computeH w} 
		\State Denote $\bv_{H+1}\gets \bf{0}$ and $\bQ_{H+1}\gets \bf{0}$
		\For{each $(s, a)\in \cS\times\cA$, $h\in [H]$} 
		\State \emph{\textbackslash \textbackslash Compute empirical estimates of $\bP_{s,a}^{\top}\bv_h^{(0)}$ and $\bsigma_{\bv_{h}^{(0)}}(s,a)$}
		\State 
		Let $\wt{\bw}_h(s,a) \gets \frac{1}{m_1} 
		\sum_{j=1}^{m_1} \bv^{(0)}_h(s_{s,a}^{(j)})$ 
		\State 
		Let $\wh{\bsigma}_h(s,a)\gets \frac{1}{m_1} \sum_{j=1}^{m_1}(\bv^{(0)}_h)^2(s_{s,a}^{(j)}) - \wt{\bw}_h^2(s,a)$ 
		\State
		
		\State \emph{\textbackslash \textbackslash Shift the empirical estimate to have one-sided error} 
		\State 	
		$\bw_h(s, a) \gets \wt{\bw}_h(s,a) - \sqrt{2\alpha_1\wh{\bsigma}_h(s,a)} - 4\alpha_1^{3/4}\norm{\bv^{(0)}_h}_\infty - (2/3)\alpha_1\norm{\bv^{(0)}_h}_{\infty}$
		
		\EndFor
		\State Let $\bv_{H+1}\gets \bf{0}$ and $\bQ_{H+1}\gets \bf{0}$.
		\State
		\State\textbf{REPEAT:} \emph{\textbackslash \textbackslash successively improve}
		\For{$h=H, H-1$ to $1$}
		\State \emph{\textbackslash \textbackslash Compute $\bP_{s,a}^\top  \big[\bv_h  - \bv_h^{(0)}\big]$ with one-sided error}
		\State\label{alg: Hv1} Let $\wt{\bv}_h \gets {\bv}_h \gets \bv(\bQ_{h+1})$, $\wt{\pi}(\cdot, h)\gets {\pi}(\cdot, h)\gets \pi(\bQ_{h+1})$, $\bv_h\gets \wt{\bv}_h$;
		\State\label{alg: Hv2} For each $s\in \cS$, if $\wt{\bv}_{h}(s)\le \bv^{(0)}_{h}(s)$, then 
		$\bv_h(s)\gets \bv^{(0)}_{h}(s)$ and $\pi(s,h)\gets \pi^{(0)}(s,h)$;
		\State For each $(s, a)\in \cS\times\cA$,
		sample independent samples $\wt{s}_{s,a}^{(1)}, \wt{s}_{s,a}^{(2)}, \ldots, \wt{s}_{s,a}^{(m_2)}$ from $\bP_{s,a}$;
		\State  \label{alg1: computeH g} Let $\bg_{h}(s,a)\gets {m_2^{-1}}\sum_{j=1}^{m_2} \big[\bv_{h}(\wt{s}_{s,a}^{(j)}) - \bv_h^{(0)}(\wt{s}_{s,a}^{(j)}) \big]- H^{-1}u/8$;
		\State
		\State 
		\emph{\textbackslash \textbackslash   Improve $\bQ_h$:}
		\State \label{alg: Hq-func} $\bQ_h\gets \br + \bw_h+\bg_h$; 
		\EndFor
		\State \textbf{return} $\bv_1, \bv_2, \ldots, \bv_H, \pi$.
%
%
%
%
%
%
%
	\end{algorithmic}
\end{algorithm}

We can then use our meta-algorithm and obtain the following theorem.

\begin{theorem}
	Let $\cM=(\cS, \cA, \bP, \br, H)$ be a $H$-MDP with a sampling oracle. 
	Suppose we can sample a state from each probability vector $\bP_{s,a}$ within time $O(1)$.	Then there exists an algorithm that runs in time 
	\[
	O\bigg[{\frac{1}{\epsilon^2}}\cdot{H^4|\cS||\cA|}\cdot \log\frac{H|\cS||\cA|}{\delta\cdot\epsilon}\cdot\log\frac{H}{\epsilon}\bigg]
	\]
	and obtains a policy $\pi$ such that, with probability at least $1-\delta$,
	\[
	\forall h\in[H]:\bv_h^* - \epsilon\one\le \bv_h^{\pi} \le \bv^*_h,
	\]
	where $\bv_h^{*}$ is the optimal value of $\cM$ at stage $h$.
	Moreover, the number of samples used by the algorithm is
	\[
	O\bigg[{\frac{1}{\epsilon^2}}\cdot{H^3|\cS||\cA|}\cdot \log\frac{H|\cS||\cA|}{\delta\cdot\epsilon}\cdot\log\frac{H}{\epsilon}\bigg].
	\]
\end{theorem}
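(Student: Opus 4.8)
The plan is to obtain the theorem by plugging Algorithm~\ref{algH-halfErr} into the meta-algorithm template of Algorithm~\ref{alg-meta}, exactly as Theorem~\ref{thm:dmdp1} follows from Proposition~\ref{thm:alg 1} via Proposition~\ref{prop:meta}, with $(1-\gamma)^{-1}$ replaced throughout by $H$. First I would initialize $\bv_h^{(0)} \gets {\bf 0}$ for every $h \in [H]$ and let $\pi^{(0)}$ be an arbitrary non-stationary policy. Since $\br \in [0,1]^{\cS\times\cA}$, we have ${\bf 0} = \bv_h^{(0)} \le \cT_{\pi^{(0)}(\cdot,h)}(\bv_{h+1}^{(0)})$ for all $h$ and $\bv_h^{(0)} \le \bv_h^* \le H\cdot\one$, so the input hypotheses of Proposition~\ref{thm:algH 1} hold with $u = H$.

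Next I would run Algorithm~\ref{algH-halfErr} for $R \defeq \lceil \log_2(H/\epsilon) \rceil = \Theta(\log(H/\epsilon))$ rounds, feeding the output $(\bv_1^{(i)}, \dots, \bv_H^{(i)}, \pi^{(i)})$ of round $i$ into round $i+1$, halving the error parameter each round (so round $i$ is invoked with $u = 2^{-i+1} H$) and invoking each round with failure probability $\delta / R$ and fresh oracle samples. By Proposition~\ref{thm:algH 1}, conditioned on rounds $1,\dots,i-1$ succeeding, round $i$ returns value vectors and a policy with $\bv_h^{(i)} \le \cT_{\pi^{(i)}(\cdot,h)}(\bv_{h+1}^{(i)})$ and ${\bf 0} \le \bv_h^* - \bv_h^{(i)} \le 2^{-i} H \cdot \one$ for every $h$. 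Because each round draws independent samples the sequence $(\bv^{(i)}, \pi^{(i)})_i$ is Markov, so the per-round success probabilities multiply just as in the proof of Proposition~\ref{prop:meta}, and a union bound over the $R$ rounds gives overall success probability at least $1 - \delta$. After round $R$ we have $2^{-R} H \le \epsilon$, hence ${\bf 0} \le \bv_h^* - \bv_h^{(R)} \le \epsilon \cdot \one$ for all $h$.

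It then remains to convert this value guarantee into the claimed sandwich on the value of the output policy $\pi \defeq \pi^{(R)}$. Here I would use the finite-horizon monotonicity argument: with the convention $\bv_{H+1}^{(R)} = \bv_{H+1}^{\pi^{(R)}} = {\bf 0}$, a backward induction on $h$ from $H$ down to $1$ using $\bv_h^{(R)} \le \cT_{\pi^{(R)}(\cdot,h)}(\bv_{h+1}^{(R)})$, the inductive hypothesis $\bv_{h+1}^{(R)} \le \bv_{h+1}^{\pi^{(R)}}$, and monotonicity of $\cT_{\pi^{(R)}(\cdot,h)}$ yields $\bv_h^{(R)} \le \cT_{\pi^{(R)}(\cdot,h)}(\bv_{h+1}^{\pi^{(R)}}) = \bv_h^{\pi^{(R)}}$. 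Combined with $\bv_h^{\pi^{(R)}} \le \bv_h^*$ (optimality of $\pi^*$) and $\bv_h^* - \bv_h^{(R)} \le \epsilon\one$, this gives $\bv_h^* - \epsilon\one \le \bv_h^{(R)} \le \bv_h^{\pi^{(R)}} \le \bv_h^*$ for all $h$, which is the conclusion of the theorem.

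Finally, for the complexity bounds I would sum the per-round costs from Proposition~\ref{thm:algH 1}: with $u = 2^{-i+1}H$ round $i$ costs $O\big[ u^{-2} H^4 |\cS||\cA| \log(|\cS||\cA| H R \delta^{-1} u^{-1}) \big]$ time and $O\big[ u^{-2} H^3 |\cS||\cA| \log(\cdots) \big]$ samples. Every logarithmic factor is $O(\log(H|\cS||\cA|/(\delta\epsilon)))$, there are $R = O(\log(H/\epsilon))$ rounds, and bounding $u^{-2} \le \epsilon^{-2}$ in each round yields the stated bounds $O[\epsilon^{-2} H^4 |\cS||\cA| \log(H|\cS||\cA|/(\delta\epsilon)) \log(H/\epsilon)]$ for time and $O[\epsilon^{-2} H^3 |\cS||\cA| \log(H|\cS||\cA|/(\delta\epsilon)) \log(H/\epsilon)]$ for samples (a sharper constant follows by instead summing the geometric series $\sum_i (2^{-i+1}H)^{-2} = O(\epsilon^{-2})$). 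The one genuinely substantive ingredient, already supplied inside Proposition~\ref{thm:algH 1} through Lemma~\ref{lemma:varianceH bound}, is the total-variance bound showing the accumulated Bernstein error over the $H$ backward stages is only $O(H^{3/2}\sqrt{\alpha_1})$ rather than the naive $O(H^2\sqrt{\alpha_1})$; beyond that, the remaining work — and the only thing one has to be careful about — is tracking the failure probabilities correctly across the adaptive sequence of $R$ calls.
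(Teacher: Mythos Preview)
Your proposal is correct and follows the same approach as the paper, which simply states ``We can then use our meta-algorithm and obtain the following theorem'' without further detail. You have filled in exactly the intended argument---plugging Algorithm~\ref{algH-halfErr} into the meta-algorithm of Algorithm~\ref{alg-meta} with $(1-\gamma)^{-1}$ replaced by $H$---and your added details (verifying the initial hypotheses, the backward-induction step converting $\bv_h^{(R)} \le \cT_{\pi^{(R)}(\cdot,h)}(\bv_{h+1}^{(R)})$ into $\bv_h^{(R)} \le \bv_h^{\pi^{(R)}}$, and the geometric summation of per-round costs) are all sound.
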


\subsection{Sample Lower Bound On $H$-MDP}
In this section we show that the sample complexity obtained by the algorithm in the last section is essentially tight.
Our proof idea is simple, we will reduce the $H$-MDP problem to a discounted MDP problem.
If there is an algorithm that solves an $H$-MDP to obtain an $\epsilon$-optimal value, it also gives an value function to the discounted MDP. 
Therefore, the lower bound of solving $H$-MDP inherits from that of the discounted MDP. 
The formal guarantee is presented in the following theorem.

\begin{theorem}
	Let $\cS$ and $\cA$ be finite sets of states and actions. Let $H>0$ be a positive integer and $\epsilon\in (0,1/2)$ be an error parameter.
	Let $\cK$ be an algorithm that, on input an $H$-MDP $\cM\defeq(\cS, \cA, P, \br)$ with a sampling oracle, outputs a value function $\bv_1$ for the first stage, such that $\|\bv_1 - \bv^*_1\|_{\infty}\le \epsilon$ with probability at least $0.9$.
	Then $\cK$ calls the sampling oracle at least $\Omega(H^{-3}\epsilon^{-2}|\cS||\cA|/\log\epsilon^{-1})$ times on  some input $P$ and $\br\in [0,1]^{\cS}$.
\end{theorem}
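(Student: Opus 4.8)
The plan is to prove the bound by reduction from the discounted--MDP value-estimation lower bound of \cite{azar2013minimax}, recalled in Appendix~\ref{sec:lower bound}: estimating $\bv^{*}$ to $\ell_\infty$-accuracy $\epsilon_0$ with probability at least $0.9$ takes $\Omega\!\big(|\cS||\cA|(1-\gamma)^{-3}\epsilon_0^{-2}\log(|\cS||\cA|)\big)$ generative queries. The key observation is that a generative query to the $H$-MDP I will construct can be answered using at most one generative query to an underlying discounted MDP, so a sample-efficient $H$-MDP solver would yield a sample-efficient discounted value estimator and contradict that bound.

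First I would fix a discount factor $\gamma$, to be chosen below as a function of $H$ and $\epsilon$, take a discounted instance $\cM=(\cS,\cA,P,\br,\gamma)$ from the hard family of \cite{azar2013minimax}, and from it form an $H$-MDP $\cM'$ on the state set $\cS\cup\{\bot\}$, where $\bot$ is a fresh absorbing state with reward $0$ under every action; the action set and the (stationary) rewards on $\cS$ are left unchanged, and from $s\in\cS$ under action $a$ the chain moves to $\bot$ with probability $1-\gamma$ and otherwise follows $P_{s,a}$. A sample from this law is produced by a single $(1-\gamma)$-biased coin flip --- on one outcome output $\bot$, on the other draw one fresh sample of $P_{s,a}$ --- and queries issued at $\bot$ cost no samples at all; hence any algorithm making $Z$ generative queries to $\cM'$ is faithfully simulated using at most $Z$ generative queries to $\cM$.

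Next I would compare optimal values. Because $\bot$ is absorbing with zero reward, under any non-stationary policy the chain occupies a real state at step $t$ with probability exactly $\gamma^{t}$, and conditioned on that event $(s_t,a_t)$ has exactly its original-MDP distribution; consequently the undiscounted $H$-step value of $\cM'$ from $s\in\cS$ equals the $H$-step truncation $\sum_{t=0}^{H-1}\gamma^{t}\,\E[\,r(s_t,a_t)\,]$ of the discounted return. Bounding this against the infinite discounted return from both sides --- any $H$-step policy achieves at most its own infinite discounted value, hence at most $\bv^{*}$, while the stationary $\gamma$-optimal policy of $\cM$ truncated to $H$ steps loses only the tail $\sum_{t\ge H}\gamma^{t}\le \gamma^{H}/(1-\gamma)$ --- yields $0\le \bv^{*}(s)-\bv_1^{\cM',*}(s)\le \gamma^{H}/(1-\gamma)$ for all $s\in\cS$. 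I would then choose $\gamma$ with $1-\gamma=\Theta\!\big(\log(1/\epsilon)/H\big)$, a legitimate discount factor in the relevant regime $H=\Omega(\log(1/\epsilon))$, which makes the truncation gap at most $\epsilon$. Hence an algorithm $\cK$ that returns an $\epsilon$-accurate first-stage value of $\cM'$ with probability at least $0.9$ returns, through the simulation, a $2\epsilon$-accurate estimate of $\bv^{*}$ for $\cM$ with probability at least $0.9$ and with no additional samples.

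Plugging this into the discounted lower bound with $\epsilon_0=2\epsilon$, $\delta=0.1$ and $(1-\gamma)^{-1}=\Theta(H/\log(1/\epsilon))$ forces $\cK$ to make $\Omega\!\big(|\cS||\cA|H^{3}\epsilon^{-2}/\mathrm{polylog}(1/\epsilon)\big)$ queries on some instance; as $\cM'$ has $|\cS|+1\ge|\cS|$ states this is also $\Omega$ of the same quantity in the size of $\cM'$, which is the claimed bound and matches the $\wt{O}(H^{3}|\cS||\cA|\epsilon^{-2})$ upper bound of the preceding subsection up to logarithmic factors. I do not expect a genuinely difficult step: the construction, the sample-for-sample simulation, and the truncation estimate are all routine. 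The only point that needs real care --- and thus the main obstacle --- is the choice of $\gamma$: it must be close enough to $1$ that $(1-\gamma)^{-3}=\Omega(H^{3}/\mathrm{polylog}(1/\epsilon))$, yet far enough from $1$ that $\gamma^{H}/(1-\gamma)\le\epsilon$ and that $2\epsilon$ still falls in the accuracy range for which the construction of \cite{azar2013minimax} is tight; balancing these is exactly what produces the logarithmic slack between the lower and upper bounds.
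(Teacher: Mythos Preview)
Your proposal is correct and follows essentially the same route as the paper: both reduce from the discounted lower bound of \cite{azar2013minimax} by augmenting the DMDP with a zero-reward absorbing state reached with probability $1-\gamma$ at every step, choose $\gamma$ so that $(1-\gamma)^{-1}=\Theta(H/\log\epsilon^{-1})$ to make the $H$-step truncation error at most $\epsilon$, and conclude that an $\epsilon$-accurate first-stage value for the $H$-MDP yields a $2\epsilon$-accurate value for the DMDP using no more generative queries. The only cosmetic difference is that the paper removes a state from $\cS$ to serve as the sink while you add a fresh state $\bot$; your explicit remark that each $H$-MDP query costs at most one DMDP query is a point the paper leaves implicit.
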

\begin{proof}
	Let $s_0\in \cS$ be a state. 
	Denote $\cS' = \cS\backslash\{s_0\}$ be a subset of $\cS$. 
	Let $\gamma\in (0, 1)$ be such that $(1-\gamma)^{-1}\log\epsilon^{-1} \le H$.
	Suppose we have an DMDP $\cM' = (\cS', \cA, P', \gamma, \br')$ with a sampling oracle. 
	Let $\bv^{*'}$ be the optimal value function of $\cM'$. 
	Note that $\bv^{*'}\in \RR^{\cS'}$. 
	We will show, in the next paragraph, an $H$-MDP $\cM = (\cS, \cA, P, H, \br)$ with first stage value $\bv_1^*$, such that $\|\bv_1^*|_{\cS'}-\bv^{*'}\|\le \epsilon$.
	Therefore, an $\epsilon$-approximation of $\bv_1^*$ gives a $2\epsilon$-approximation to $\bv^*$.
	We show that $\cK$ can be used to obtain an $\epsilon$-approximate value $\bv_1$ for $\bv_1^*$ of $\cM$ and thus $\cK$ inherits the lower bound for obtaining $(2\epsilon)$-approximated value for $\gamma$-DMDPs.
	
	For $\cM$, in each state $s\in \cS'$, for any action there is a $(1-\gamma)$ probability transiting to $s_0$ and $\gamma$ probability to do the original transitions in $\cM'$; for $s_0$, no matter what action taken, it transits to itself with probability $1$. 
	Formally, for each state $s,s'\in \cS', a\in \cA$, $P(s'|s, a) = \gamma\cdot P'(\cdot|s,a)$ and $P(s_0|s, a) = (1-\gamma)$; $P(s'|s_0, a) = 0$ and $P(s_0|s_0, a) = 1$.
	For $\br$, we set $\br(s_0, \cdot) ={\bf0}$ and $\br(s, \cdot) = \br'(s, \cdot)$ for $s\in \cS'$.
	It remains to show that $\|\bv_1^*|_{\cS'} - \bv^*\|_\infty\le \epsilon$.
	First we note that $\bv(\br) = \bv_{H}^* \le \bv^*$.
	Then, by monotonicity of the $\cT$ operator, we have, for all $h\in [H-1]$ and $s\in\cS'$,
	\begin{align*}
		\bv_{h}^*|_{\cS'}(s) = \max_a[\br'(s, a) + \gamma\bP_{s,a}^{'\top} \bv_{h+1}^*] \le \bv^{*'}.
	\end{align*}
	In particular, $\bv_{1}^*|_{\cS'}\le \bv^{*'}$.
	Since the optimal policy $\pi^{*'}$ of $\cM'$ can be used as a policy for the $H$-MDP as a non-optimal one, we have
	\[
		 \bv^* - \epsilon\cdot \one\le \bigg[1 + \gamma\bP_{\pi^{*'}} + \gamma^2\bP_{\pi^{*'}}^2+\cdot +  \gamma^{H}\cdot \bP_{\pi^{*'}}^{H}\bigg]\br^{\pi^{*'}} \le  \bv_1^*|_{\cS'}.
	\] 
	This completes the proof.
\end{proof}
The above lower bound with our algorithm also implies a sample lower bound for an $\epsilon$-policy. 
\begin{corollary}
Let $\cS$ and $\cA$ be finite sets of states and actions. Let $H>0$ be a positive integer and $\epsilon\in (0,1/2)$ be an error parameter.
Let $\cK$ be an algorithm that, on input an $H$-MDP $\cM:=(\cS, \cA, P, \br)$ with a sampling oracle, outputs a policy $\pi: \cS\times[H]\rightarrow \cA$, such that $\forall h: \|\bv^{\pi}_h - \bv^*_h\|_{\infty}\le \epsilon$ with probability at least $0.9$.
Then $\cK$ calls the sampling oracle at least $\Omega(H^{-3}\epsilon^{-2}|\cS||\cA|/\log\epsilon^{-1})$ times on the worst case input $P$ and $\br\in [0,1]^{\cS}$.
\end{corollary}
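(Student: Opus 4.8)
The plan is to reduce this policy lower bound to the value‑function lower bound of the preceding theorem, in exactly the way that the discounted‑case policy lower bound in Section~\ref{sec:lower bound} is reduced to the value‑function lower bound of \cite{azar2013minimax}. First I would convert $\cK$, at an additive cost of only $\wt{O}(H^3\epsilon^{-2}|\cS|)$ oracle calls, into an algorithm that outputs an $O(\epsilon)$‑optimal stage‑$1$ value function: run $\cK$ to obtain a policy $\pi$ with $\|\bv_h^\pi-\bv_h^*\|_\infty\le\epsilon$ for every $h$ (in particular $h=1$) with probability at least $0.9$; then run a policy‑evaluation subroutine producing $\wh{\bv}$ with $\|\wh{\bv}-\bv_1^\pi\|_\infty\le\epsilon$ with probability at least $1-\delta'$ for an arbitrarily small constant $\delta'$; and output $\wh{\bv}$. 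On the intersection of the two success events, which has probability at least $0.9-\delta'$, the triangle inequality gives $\|\wh{\bv}-\bv_1^*\|_\infty\le 2\epsilon$. The loss of $\delta'$ in the confidence is immaterial, since the value‑function lower bound, being inherited from \cite{azar2013minimax}, holds for any constant success probability bounded away from $1/2$.

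Next I would bound the cost of policy evaluation, and the key point is that it carries \emph{no} factor of $|\cA|$. Since $\pi$ prescribes a single action at every (state, stage) pair, following $\pi$ induces a time‑inhomogeneous Markov reward process and $\bv_1^\pi(s)$ is the expectation of a cumulative reward taking values in $[0,H]$. For each $s\in\cS$ draw $m=O(H^2\epsilon^{-2}\log(|\cS|/\delta'))$ independent length‑$H$ rollouts started from $s$ under $\pi$, each costing at most $H$ oracle calls, and let $\wh{\bv}(s)$ be the empirical mean return; by Hoeffding's inequality and a union bound over the $|\cS|$ starting states, $\|\wh{\bv}-\bv_1^\pi\|_\infty\le\epsilon$ with probability at least $1-\delta'$, using $O(mH|\cS|)=\wt{O}(H^3\epsilon^{-2}|\cS|)$ oracle calls in total. (Alternatively one can invoke the finite‑horizon analogue of Theorem~\ref{thm:high precision} together with the total‑variance estimate of Lemma~\ref{lemma:varianceH bound}; either way the cost is $\wt{O}(H^3\epsilon^{-2}|\cS|)$.)

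Finally I would combine the two pieces. Let $Z$ be the worst‑case number of oracle calls made by $\cK$. The composed algorithm makes $Z+\wt{O}(H^3\epsilon^{-2}|\cS|)$ oracle calls and, with probability bounded away from $1/2$, outputs a $(2\epsilon)$‑optimal stage‑$1$ value function; applying the preceding value‑function lower bound with error parameter $2\epsilon$ (which changes the bound only by constant factors, as $\log(2\epsilon)^{-1}\le\log\epsilon^{-1}$) there is an input on which the composed algorithm must make $\Omega(H^3\epsilon^{-2}|\cS||\cA|/\log\epsilon^{-1})$ calls. Hence $Z\ge\Omega\!\big(H^3\epsilon^{-2}|\cS||\cA|/\log\epsilon^{-1}\big)-\wt{O}(H^3\epsilon^{-2}|\cS|)$, and taking the hard instance with $|\cA|=\wt{\Omega}(1)$ so that $|\cS||\cA|$ dominates $|\cS|$ times the polylogarithmic factors — exactly the regime used in the discounted case — the subtracted term is of lower order and we conclude $Z=\Omega(H^3\epsilon^{-2}|\cS||\cA|/\log\epsilon^{-1})$. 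The main obstacle is precisely ensuring that the policy‑evaluation step costs only $\wt{O}(H^3\epsilon^{-2}|\cS|)$ rather than $\wt{O}(H^3\epsilon^{-2}|\cS||\cA|)$, since this is what lets the $|\cA|$ factor survive the subtraction; the remaining issues (probability bookkeeping and the mild $|\cA|=\wt{\Omega}(1)$ caveat) are routine and already present in \cite{azar2013minimax}'s reduction.
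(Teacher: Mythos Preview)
Your proposal is correct and follows essentially the same route as the paper, which does not give an explicit proof but simply remarks that ``the above lower bound with our algorithm also implies a sample lower bound for an $\epsilon$-policy,'' i.e., the finite-horizon analogue of the reduction in Section~\ref{sec:lower bound}. You have correctly filled in the details: evaluate the output policy using only $\wt{O}(H^3\epsilon^{-2}|\cS|)$ oracle calls (crucially with no $|\cA|$ factor), compose with $\cK$ to obtain an $O(\epsilon)$-approximate stage-$1$ value function, and invoke the preceding value-function lower bound together with the $|\cA|=\wt{\Omega}(1)$ caveat.
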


\end{document}